\documentclass[10pt,a4paper,twoside]{amsart}

\usepackage{graphicx}
\usepackage{a4wide}

\newtheorem{definition}{Definition}[section]
\newtheorem{theorem}{Theorem}[section]
\newtheorem{lemma}{Lemma}[section]

\newtheorem{remark}{Remark}[section]
\newtheorem*{maintheorem*}{Main Theorem}
\allowdisplaybreaks
\numberwithin{equation}{section}

\newcommand{\norm}[1]{\left\| #1 \right\|}

\newcommand{\eps}{\varepsilon}

\newcommand{\eb}{{\eps,\beta}}
\newcommand{\ueb}{u_\eb}

\newcommand{\pt}{\partial_t}

\newcommand{\px}{\partial_x }
\newcommand{\pxx}{\partial_{xx}^2}
\newcommand{\pxxx}{\partial_{xxx}^3}
\newcommand{\ptxxx}{\partial_{txxx}^4}

\newcommand{\ptxxxx}{\partial_{txxxx}^5}
\newcommand{\ptx}{\partial_{tx}^2}
\newcommand{\ptxx}{\partial_{txx}^3}

\renewcommand{\i}{\ifmmode\mathit{\mathchar"7010 }\else\char"10 \fi}
\renewcommand{\j}{\ifmmode\mathit{\mathchar"7011 }\else\char"11 \fi}
\newcommand{\R}{\mathbb{R}}
\newcommand{\N}{\mathbb{N}}

%\newcommand{\R}{{\mathbb{R}}}
%\newcommand{\NN}{\mathbb{N}}

%\newcommand{\N}{{\cal N}}

%\newcommand{\eps}{\varepsilon}

%\newcommand{\Tilde}{\widetilde}

%\newcommand{\forall}{\hbox{for all~}}
%\newcommand{\dag}{\dagger}

%\newcommand{\span}{\hbox{span}}

%\newcommand{\sgn}{\textrm{sgn }}

% Definition of assumptions
{%

\begin{enumerate}}%
{\end{enumerate}}

%
% Definition of definitions
{%

\begin{enumerate}}%
{\end{enumerate}}

\begin{document}\large

\title[A Singular limit problem of Rosenau type]{A singular limit problem for conservation laws \\ related to the Rosenau equation}
\author[G. M. Coclite and L. di Ruvo]{Giuseppe Maria Coclite and Lorenzo di Ruvo}
\address[Giuseppe Maria Coclite and Lorenzo di Ruvo]
{\newline Department of Mathematics,   University of Bari, via E. Orabona 4, 70125 Bari,   Italy}
\email[]{giuseppemaria.coclite@uniba.it, lorenzo.diruvo@uniba.it}
\urladdr{http://www.dm.uniba.it/Members/coclitegm/}

\keywords{Singular limit, compensated compactness, Rosenau equation, entropy condition.}

\subjclass[2000]{35G25, 35L65, 35L05}

%35G25 Initial value problems for nonlinear higher-order PDE, nonlinear evolution equations
%35L05 Wave equation
%74S20 Finite difference methods
%35L65 Conservation laws
%65M12 Stability and convergence of numerical methods

\thanks{The authors are members of the Gruppo Nazionale per l'Analisi Matematica, la Probabilit\`a e le loro Applicazioni (GNAMPA) of the Istituto Nazionale di Alta Matematica (INdAM)}

\begin{abstract}
We consider the Rosenau equation, which contains nonlinear dispersive effects. We prove
that as the diffusion parameter tends to zero, the solutions of the dispersive equation converge to discontinuous weak
solutions of the Burgers equation.
The proof relies on deriving suitable a priori estimates together with an application of the compensated compactness method in the $L^p$ setting.
\end{abstract}

\maketitle

%\tableofcontents

\section{Introduction}\label{sec:intro}
Dynamics of shallow water waves that is observed along lake shores and beaches has been a research area for the past few decades in
oceanography (see \cite{AB,ZZZC}). There are several models proposed in this context: Korteweg-de Vries (KdV) equation, Boussinesq equation, 
Peregrine equation, regularized long wave (RLW) equation, Kawahara equation, Benjamin-Bona-Mahoney equation, Bona-Chen equation etc.
These models were derived from first principles under various different hypothesis and approximations. They are all well studied and very well understood.

The dynamics of dispersive shallow water waves, on the other hand, is captured with slightly different models, like Rosenau-Kawahara equation, 
Rosenau-KdV equation, and Rosenau-KdV-RLW equation \cite{BTL,EMTYB,HXH,LB,RAB}.

The Rosenau-KdV-RLW equation is 
\begin{equation}
\label{eq:RKV-1}
\pt u +a\px u +k\px u^{n}+b_1\pxxx u +b_2\ptxx u + c\ptxxxx u=0,\quad a,\,k,\,b_1,\,b_2,\,c\in\R.
\end{equation}
Here $u(t,x)$ is the nonlinear wave profile. The first term is the linear evolution one, while $a$ is the advection or drifting coefficient. 
$b_1$ and $b_2$ are the dispersion coefficients. 
The higher order dispersion  coefficient is  $c$, 
while the coefficient of nonlinearity is $k$ where $n$ is nonlinearity parameter. 
These are all known and given parameters.

In \cite{RAB}, the authors analyzed \eqref{eq:RKV-1}. They got solitary waves, shock waves and singular solitons along with conservation laws.

Considering the  $n=2,\, a=0,\, k=1,\, b_1=1,\, b_2=-1,\, c=1$:
\begin{equation}
\label{eq:RKV-23}
\pt u +\px u^2 +\pxxx u -\ptxx u +\ptxxxx u=0.
\end{equation}
If $n=2, \, a=0,\, k=1,\, b_1=0,\, b_2=-1,\, c=1$, \eqref{eq:RKV-1} reads
\begin{equation}
\label{eq:RKV-30}
\pt u +\px u^2 -\ptxx u +\ptxxxx u=0,
\end{equation}
which is known as Rosenau-RLW equation.

Arguing in \cite{CdREM}, we re-scale the equations as
follows
\begin{align}
\label{eq:T1}
\pt u +\px u^2 +\beta\pxxx u -\beta\ptxx u +\beta^2\ptxxxx u&=0,\\
\label{eq:T2}
\pt u +\px u^2 -\beta\ptxx u +\beta^2\ptxxxx\ueb&=0,
\end{align}
where $\beta$ is the diffusion parameter.

In \cite{Cd5}, the authors proved that the solutions of \eqref{eq:T1} and \eqref{eq:T2} converge to the unique entropy solution of the  Burgers  equation
\begin{equation}
\label{eq:BU}
\pt u+\px u^2=0.
\end{equation}
Choosing $n=2,\, a=0,\, k=1,\, b_2=b_1=0,\, c=1$, \eqref{eq:RKV-1} reads
\begin{equation}
\label{eq:RKV-2}
\pt u + \px u^{2} + \ptxxxx u=0,
\end{equation}
which is known as Rosenau equation (see \cite{Ro1,Ro2}). 
The existence and the uniqueness of the solution for \eqref{eq:RKV-2} has been proved in \cite{P}.

Finally, if $n=2,\, a=0,\, k=1,\, b_1=1,\, b_2=0,\, c=1$, \eqref{eq:RKV-1} reads
\begin{equation}
\label{eq:RKV-3}
\pt u  +\px u^{2}+\pxxx u + \ptxxxx u=0,
\end{equation}
which is known as Rosenau-KdV equation.

In \cite{Z}, the author discussed the solitary wave solutions and \eqref{eq:RKV-3}. 
In \cite{HXH}, a conservative linear finite difference scheme for the numerical solution for an initial-boundary value problem of the Rosenau-KdV equation 
is considered.
In \cite{E,RTB}, authors discussed the solitary solutions for \eqref{eq:RKV-3} with solitary ansatz method. The authors also gave the two invariants for 
\eqref{eq:RKV-3}. In particular, in \cite{RTB}, the authors  studied  two types of soliton solutions: a solitary wave  and  a singular soliton. 
In \cite{ZZ}, the authors proposed an average linear finite difference scheme for the numerical solution of the initial-boundary value problem for  \eqref{eq:RKV-3}.

In this paper, we analyze  \eqref{eq:RKV-2}. Arguing in \cite{CdREM}, we re-scale the equations as
follows
\begin{equation}
\label{eq:RKV32}
\pt u  + \px u^2 +\beta^2\ptxxxx u=0.
\end{equation}

We are interested in the no high frequency limit,  we send $\beta\to 0$ in \eqref{eq:RKV32}. In this way we pass from \eqref{eq:RKV32}  to \eqref{eq:BU}

We prove that, as $\beta\to0$, the solutions of  converge \eqref{eq:RKV32}  to the unique entropy solution of \eqref{eq:BU}.

In other to do this,  we can choose the initial datum and $\beta$ in two different ways.

Following \cite[Theorem $7.1$]{CRS}, the first choice is the following (see Theorem \ref{th:main-1}):
\begin{equation}
\label{eq:uo-l2}
u_{0}\in L^2(\R), \quad \beta=o\left(\eps^4\right).
\end{equation}

Since $\norm{\cdot}_{L^4}$ is a conserved quantity for \eqref{eq:RKV32},  the second  choice is (see Theorem \ref{th:main-13}):
\begin{equation}
\label{eq:uo-l4}
u_{0}\in L^2(\R)\cap L^4(\R), \quad \beta=\mathbf{\mathcal{O}}\left(\eps^4\right).
\end{equation}
It is interesting to observe that, while the summability on the initial datum in \eqref{eq:uo-l4} is greater than the one in \eqref{eq:uo-l2}, the assumption on $\beta$ in \eqref{eq:uo-l4} is weaker than the one in \eqref{eq:uo-l2}.

From the mathematical point of view, the two assumptions require two different arguments for the $L^{\infty}-$estimate (see Lemmas \ref{lm:50} and \ref{lm:562}). Indeed, the proof of Lemma \ref{lm:50}, under the assumption \eqref{eq:uo-l2}, is more technical than the one of Lemma \ref{lm:562}.

The paper is organized in five sections. In Section \ref{sec:Ro1}, we prove the convergence of \eqref{eq:RKV32} to \eqref{eq:BU} in the $L^p$ setting, with $1\le p<2$. In Section \ref{sec:D1}, we prove the convergence of \eqref{eq:RKV32} to \eqref{eq:BU} in  the $L^p$ setting, with $1\le p<4$. Sections \ref{appen1} and \ref{appen2} are two appendixes, where, choosing the initial datum in two different ways, we prove that the solutions of the  Korteweg-de Vries equation converge to discontinuous weak solutions of \eqref{eq:BU} in the $L^{p}$ setting, with $1\le p< 2$.

\section{The Rosenau equation: $u_0\in L^2(\R)$}\label{sec:Ro1}
In this section, we consider \eqref{eq:RKV32}, and  assume \eqref{eq:uo-l2} on the initial datum.

%Observe that if $\beta\to0$, we have \eqref{eq:BU}.

We study the dispersion-diffusion limit for \eqref{eq:RKV32}, namely we send $\beta\to0$ and get \eqref{eq:BU}. Therefore, we fix two small numbers
$0 < \eps,\,\beta < 1$ and consider the following fifth order problem
\begin{equation}
\label{eq:Ro-eps-beta}
\begin{cases}
\pt\ueb+ \px \ueb^2 +\beta^2\ptxxxx\ueb=\eps\pxx\ueb, &\qquad t>0, \ x\in\R ,\\
\ueb(0,x)=u_{\eps,\beta,0}(x), &\qquad x\in\R,
\end{cases}
\end{equation}
where $u_{\eps,\beta,0}$ is a $C^\infty$ approximation of $u_{0}$ such that
\begin{equation}
\begin{split}
\label{eq:u0eps-1}
&u_{\eps,\,\beta,\,0} \to u_{0} \quad  \textrm{in $L^{p}_{loc}(\R)$, $1\le p < 2$, as $\eps,\,\beta \to 0$,}\\
&\norm{u_{\eps,\beta, 0}}^2_{L^2(\R)}+\left(\beta^{\frac{1}{2}}+ \eps^2\right) \norm{\px u_{\eps,\beta,0}}^2_{L^2(\R)}\le C_0,\quad \eps,\beta >0\\
&\left(\beta^2+\beta\eps^2 \right)\norm{\pxx u_{\eps,\beta,0}}^2_{L^2(\R)} +\beta^{\frac{5}{2}}\norm{\pxxx u_{\eps,\beta,0}}^2_{L^2(\R)}\le C_0,\quad \eps,\beta >0,
\end{split}
\end{equation}
and $C_0$ is a constant independent on $\eps$ and $\beta$.

The main result of this section is the following theorem.
\begin{theorem}
\label{th:main-1}
Assume that \eqref{eq:uo-l2} and  \eqref{eq:u0eps-1} hold. Fix $T>0$,
if
\begin{equation}
\label{eq:beta-eps-2}
\beta=\mathbf{\mathcal{O}}\left(\eps^4\right),
\end{equation}
then, there exist two sequences $\{\eps_{n}\}_{n\in\N}$, $\{\beta_{n}\}_{n\in\N}$, with $\eps_n, \beta_n \to 0$, and a limit function
\begin{equation*}
u\in L^{\infty}((0,T); L^2(\R)),
\end{equation*}
such that
\begin{itemize}
\item[$i)$] $u_{\eps_n, \beta_n}\to u$  strongly in $L^{p}_{loc}(\R^{+}\times\R)$, for each $1\le p <2$,
\item[$ii)$] $u$ is a distributional solution of \eqref{eq:BU}.
\end{itemize}
Moreover, if
\begin{equation}
\label{eq:beta-eps-4}
\beta=o\left(\eps^{4}\right),
\end{equation}
\begin{itemize}
\item[$iii)$] $u$ is the unique entropy solution of \eqref{eq:BU}.
\end{itemize}
\end{theorem}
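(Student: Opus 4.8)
The plan is to follow the standard compensated-compactness programme for singular limits of this type, in the $L^p$ setting with $1 \le p < 2$. The structure of the argument is dictated by the three conclusions: first produce a solution of the fifth-order problem \eqref{eq:Ro-eps-beta} with enough a priori control, then extract a convergent subsequence and identify the limit as a distributional solution of Burgers' equation \eqref{eq:BU}, and finally, under the stronger relation \eqref{eq:beta-eps-4}, promote it to the entropy solution.

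\textbf{Step 1: A priori estimates.} Starting from the initial-data bounds \eqref{eq:u0eps-1}, I would derive a chain of energy estimates for $\ueb$, uniform in a suitable sense in $\eps,\beta$ under the scaling \eqref{eq:beta-eps-2}. Multiplying \eqref{eq:Ro-eps-beta} by $\ueb$ and integrating by parts gives control of $\norm{\ueb(t,\cdot)}_{L^2(\R)}$ together with the dissipation term $\eps\int_0^T\norm{\px\ueb}_{L^2}^2\dt$; the dispersive term $\beta^2\ptxxxx\ueb$ contributes, after integration by parts in $x$ and $t$, a boundary-in-time term of order $\beta^2\norm{\pxx\ueb}_{L^2}^2$ which is absorbed by the second and third lines of \eqref{eq:u0eps-1}. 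Higher-order estimates (multiplying by $\pxx\ueb$, $\pxxxx\ueb$, etc.) then yield the weighted bounds on $\px\ueb$, $\pxx\ueb$, $\pxxx\ueb$ with the same weights $\beta^{1/2}+\eps^2$, $\beta^2+\beta\eps^2$, $\beta^{5/2}$ appearing in the hypotheses — these are exactly the quantities that stay bounded. The delicate point, and the one I expect to be the main obstacle, is the $L^\infty$ estimate: as the paper itself flags (Lemma \ref{lm:50}), under the mere $L^2$ assumption \eqref{eq:uo-l2} one cannot simply interpolate, and one must run a more technical argument — presumably a bootstrap combining the Gagliardo–Nirenberg inequality $\norm{\ueb}_{L^\infty}^2 \lesssim \norm{\ueb}_{L^2}\norm{\px\ueb}_{L^2}$ with the weighted gradient bound and the precise rate $\beta = o(\eps^4)$ or $\mathcal{O}(\eps^4)$, to close a Gronwall-type inequality for $\norm{\ueb(t,\cdot)}_{L^\infty}$ on $[0,T]$.

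\textbf{Step 2: Compensated compactness and the distributional solution.} With a uniform $L^\infty$ bound in hand on $[0,T]$, $\{\ueb\}$ is bounded in $L^\infty$, so along a subsequence $\uek := u_{\eps_n,\beta_n} \weakstar u$ in $L^\infty$. To upgrade to strong $L^p_{loc}$ convergence I would invoke the div-curl lemma via Tartar's scheme: for each $C^2$ entropy pair $(\eta, q)$ with $q' = 2\eta' \cdot(\text{id})$ for the flux $u^2$, one shows $\pt\eta(\uek) + \px q(\uek)$ is precompact in $\Hneg_{\loc}$. This follows by splitting it, using the equation, into $\eps\eta'(\uek)\pxx\uek$ (which $\to 0$ in $L^1_{\loc}$, hence is compact in $W^{-1,1}_{\loc}$, once one checks $\eps\norm{\px\uek}_{L^2}^2$ is bounded and $\eps\to 0$ — note $\eps\eta''(\uek)(\px\uek)^2$ is bounded in $L^1$ so lies in a bounded set of measures) plus the dispersive remainder involving $\beta^2$-terms, which one must show is small in $\Hneg_{\loc}$ — here the weighted higher-order bounds from Step 1 and the smallness of $\beta$ relative to $\eps$ are used. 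Then Murat's lemma ($\Hneg_{\loc}$-compact $+$ bounded in $W^{-1,\infty}_{\loc}$ $\Rightarrow$ $\Hneg_{\loc}$-compact) applies, Tartar's commutation relation for the Young measure forces it to be a Dirac mass a.e., and strong convergence follows. Passing to the limit in the weak formulation of \eqref{eq:Ro-eps-beta}, the $\eps\pxx\uek$ and $\beta^2\ptxxxx\uek$ terms vanish against a test function and one obtains $\pt u + \px u^2 = 0$ in $\Dp$, giving $(ii)$.

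\textbf{Step 3: The entropy solution under $\beta = o(\eps^4)$.} For $(iii)$ it suffices to check the \Kruzkov entropy inequalities $\pt|\uek - k| + \px(\sgn{\uek-k}(\uek^2-k^2)) \le 0$ in the limit, for every constant $k$. Multiplying \eqref{eq:Ro-eps-beta} by $\sgn{\ueb-k}$ (rigorously, by $\eta_\delta'(\ueb-k)$ for a smooth convex approximation $\eta_\delta$ of $|\cdot-k|$ and letting $\delta\to0$), the viscous term $\eps\eta_\delta'\pxx\ueb$ produces $\eps\eta_\delta''(\px\ueb)^2 \ge 0$ with the right sign, so it only helps; the real issue is the dispersive term $\beta^2\eta_\delta'(\ueb-k)\ptxxxx\ueb$, which after integration by parts becomes a sum of terms that must be shown to tend to zero in $\Dp$. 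This is precisely where the sharper scaling $\beta = o(\eps^4)$ is needed rather than $\mathcal{O}(\eps^4)$: the worst term, of the form $\beta^2 \int \eta_\delta'''(\ueb-k)(\pxx\ueb)^2 \px\ueb$ or similar, is estimated using the weighted bounds $\beta^2\norm{\pxx\ueb}_{L^2}^2 \lesssim 1$ and $\eps\norm{\px\ueb}_{L^2}^2 \lesssim 1$, and one checks the resulting power of $\beta/\eps^{\text{something}}$ vanishes exactly when $\beta = o(\eps^4)$. Once the limit $u$ satisfies all \Kruzkov inequalities, it is the unique entropy solution by \Kruzkovs theorem, and since the whole family (not just the subsequence) converges to this unique limit, the proof is complete.
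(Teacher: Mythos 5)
Your outline follows the right general programme, but it contains a genuine gap at its pivot point: in Step 2 you assert that Step 1 yields a \emph{uniform} $L^\infty$ bound on $[0,T]$, and you then run the classical Tartar scheme (weak-$\star$ $L^\infty$ convergence, Young measures reducing to Dirac masses). Under the hypothesis \eqref{eq:uo-l2} no such uniform bound is available, and none is proved in the paper: Lemma \ref{lm:50} only gives $\norm{\ueb}_{L^{\infty}((0,T)\times\R)}\le C_0\beta^{-1/4}$, a bound that \emph{degenerates} as $\beta\to 0$. This is consistent with the statement of the theorem itself, which only claims $u\in L^{\infty}((0,T);L^2(\R))$ and convergence in $L^p_{loc}$ for $p<2$. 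The correct substitute, and the one the paper uses, is the $L^p$ version of compensated compactness (Schonbek), which works with \emph{compactly supported} entropy--entropy flux pairs and requires only the uniform $L^\infty_tL^2_x$ bound of Lemma \ref{lm:38} together with $H^{-1}_{loc}$-compactness of $\pt\eta(\ueb)+\px q(\ueb)$ via Murat's lemma. Your Gronwall/bootstrap suggestion for closing a uniform $L^\infty$ estimate would fail; the actual closure is algebraic: combining $\ueb^2\le 2\norm{\ueb}_{L^2}\norm{\px\ueb}_{L^2}$ with the weighted gradient bound $\beta^{1/2}\norm{\px\ueb}^2_{L^2}\le C_0(1+\norm{\ueb}^2_{L^\infty})$ gives $y^4\le C_0\beta^{-1/2}(1+y^2)$ for $y=\norm{\ueb}_{L^\infty}$, whence only $y\le C_0\beta^{-1/4}$.

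A second, related issue is that your treatment of the dispersive remainder does not engage with the actual structure of the Rosenau term $\beta^2\ptxxxx\ueb$, which is a \emph{mixed} time--space derivative. The decomposition used in the paper is $\beta^2\eta'(\ueb)\ptxxxx\ueb=\px(\beta^2\eta'(\ueb)\ptxxx\ueb)-\beta^2\eta''(\ueb)\px\ueb\ptxxx\ueb$, so the required a priori bounds are on $\ptxxx\ueb$, $\ptxx\ueb$, $\ptx\ueb$, $\pt\ueb$ in $L^2((0,T)\times\R)$ with precise $\beta,\eps$ weights; these are obtained by multiplying the equation by $-\beta^{1/2}\pxx\ueb-\beta\eps\ptxx\ueb+\eps\pt\ueb$, not by the purely spatial multipliers you propose. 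The distinction between $(ii)$ and $(iii)$ then comes out cleanly: the term $\beta^2\eta''(\ueb)\px\ueb\ptxxx\ueb$ is bounded in $L^1$ by $C_0\norm{\eta''}_{L^\infty}\beta^{1/4}\eps^{-1}$, which is merely bounded under $\beta=\mathcal{O}(\eps^4)$ (giving a distributional solution via Murat's lemma) but tends to zero under $\beta=o(\eps^4)$ (giving, following LeFloch--Natalini, the entropy inequalities for all convex entropies and hence uniqueness). Your Step 3, which works with \Kruzkov entropies $|\cdot-k|$ and a purely spatial integration by parts producing terms like $(\pxx\ueb)^2\px\ueb$, does not match this structure and would need the same mixed-derivative estimates to be salvageable.
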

Let us prove some a priori estimates on $\ueb$, denoting with $C_0$ the constants which depend only on the initial data.

Arguing as \cite[Lemma $2.1$]{Cd5}, we have the following result.
\begin{lemma}\label{lm:38}
For each $t>0$,
\begin{equation}
\label{eq:l-2-u1}
\norm{\ueb(t,\cdot)}^2_{L^2(\R)}+\beta^2\norm{\pxx\ueb(t,\cdot)}^2_{L^2(\R)} + 2\eps\int_{0}^{t}\norm{\px\ueb(s,\cdot)}^2_{L^2(\R)}ds\le C_0.
\end{equation}
\end{lemma}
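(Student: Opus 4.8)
The plan is to prove the energy identity \eqref{eq:l-2-u1} by multiplying the parabolic--dispersive equation \eqref{eq:Ro-eps-beta} by $\ueb$ and integrating over $\R$ in the space variable, exactly as in \cite[Lemma $2.1$]{Cd5}. Formally, $\int_\R \ueb\,\pt\ueb\dx = \frac12\frac{d}{dt}\norm{\ueb(t,\cdot)}^2_{L^2(\R)}$; the convective term $\int_\R \ueb\,\px\ueb^2\dx$ integrates to zero because $\ueb\,\px\ueb^2 = \px\big(\tfrac{2}{3}\ueb^3\big)$ is a perfect spatial derivative and $\ueb(t,\cdot)$ together with its derivatives decays at infinity; the viscous term gives $\eps\int_\R\ueb\,\pxx\ueb\dx = -\eps\norm{\px\ueb(t,\cdot)}^2_{L^2(\R)}$ after one integration by parts. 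The only genuinely new contribution relative to the pure Burgers-with-viscosity computation is the fourth-order dispersive term.

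The key observation for the dispersive term is that it is \emph{conservative} in the $L^2$ norm: one integrates by parts twice to get
\begin{equation*}
\beta^2\int_\R \ueb\,\ptxxxx\ueb\dx = \beta^2\int_\R \ueb\,\pt\big(\pxxxx\ueb\big)\dx = \beta^2\int_\R \pxx\ueb\,\pt\big(\pxx\ueb\big)\dx = \frac{\beta^2}{2}\frac{d}{dt}\norm{\pxx\ueb(t,\cdot)}^2_{L^2(\R)},
\end{equation*}
where the boundary terms vanish by the decay of $\ueb$ and its derivatives. Collecting the three pieces yields
\begin{equation*}
\frac12\frac{d}{dt}\Big(\norm{\ueb(t,\cdot)}^2_{L^2(\R)}+\beta^2\norm{\pxx\ueb(t,\cdot)}^2_{L^2(\R)}\Big) + \eps\norm{\px\ueb(t,\cdot)}^2_{L^2(\R)} = 0.
\end{equation*}
Integrating in time from $0$ to $t$ and multiplying by $2$ gives the stated inequality, with the right-hand side being
\begin{equation*}
\norm{u_{\eps,\beta,0}}^2_{L^2(\R)}+\beta^2\norm{\pxx u_{\eps,\beta,0}}^2_{L^2(\R)},
\end{equation*}
which is bounded by $C_0$ uniformly in $\eps,\beta$ thanks to the second and third lines of \eqref{eq:u0eps-1}. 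In fact the time integral of $\norm{\px\ueb}^2_{L^2}$ even drops out of the equality with a favourable sign, so keeping it on the left as in \eqref{eq:l-2-u1} only strengthens the bound.

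The main obstacle is not the algebra but making the integrations by parts rigorous: one must justify that $\ueb(t,\cdot)$ and its spatial derivatives up to order three lie in $L^2(\R)$ and decay sufficiently fast for all the boundary terms to vanish, and that $t\mapsto\norm{\ueb(t,\cdot)}^2_{L^2}$ and $t\mapsto\norm{\pxx\ueb(t,\cdot)}^2_{L^2}$ are absolutely continuous so that the fundamental theorem of calculus applies. This is handled exactly as in \cite[Lemma $2.1$]{Cd5}: one works with the smooth solution of the regularized problem \eqref{eq:Ro-eps-beta} (whose existence and regularity, including the needed decay, is part of the well-posedness theory already invoked), derives the identity there, and concludes. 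I would therefore present the computation formally and refer to \cite{Cd5} for the functional-analytic details, noting only that the $C^\infty$ approximation of the initial datum and the parabolic smoothing from the $\eps\pxx$ term provide the regularity that legitimizes every manipulation.
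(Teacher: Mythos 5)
Your proof is correct and is exactly the argument the paper intends: it states the lemma by reference to \cite[Lemma~2.1]{Cd5}, which is precisely this $L^2$ energy estimate obtained by multiplying \eqref{eq:Ro-eps-beta} by $\ueb$, integrating by parts (the dispersive term being conservative for $\norm{\ueb}^2_{L^2}+\beta^2\norm{\pxx\ueb}^2_{L^2}$), and invoking \eqref{eq:u0eps-1}. The only nitpick is the phrase that the viscous time integral ``drops out''; it does not drop out but appears on the left with a favourable sign, which is what you in fact use.
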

\begin{lemma}\label{lm:50}
Fix $T>0$. Assume \eqref{eq:beta-eps-2} holds. There exists $C_0>0$, independent on $\eps,\,\beta$ such that
\begin{equation}
\label{eq:u-infty-3}
\norm{\ueb}_{L^{\infty}((0,T)\times\R)}\le C_0\beta^{-\frac{1}{4}}.
\end{equation}
Moreover,
\begin{itemize}
\item[$i)$] the families  $\{\beta^{\frac{1}{2}}\px\ueb\}_{\eps,\,\beta},\,\{\beta^{\frac{1}{4}}\eps\px\ueb\}_{\eps,\,\beta},\,\{\beta^{\frac{3}{4}}\eps\pxx\ueb\}_{\eps,\,\beta}, \{\beta^{\frac{3}{2}}\pxxx\ueb\}_{\eps,\,\beta},$\\ are bounded in $L^{\infty}((0,T);L^{2}(\R))$;
\item[$ii)$] the families $\{\beta^{\frac{3}{4}}\eps^{\frac{1}{2}}\ptx\ueb\}_{\eps,\beta},\,\{\beta^{\frac{7}{4}}\eps^{\frac{1}{2}}\ptxxx\ueb\}_{\eps,\,\beta},\, \{\beta^{\frac{1}{4}}\eps\pt\ueb\}_{\eps,\,\beta} $,\\
$\{\beta^{\frac{5}{4}}\eps^{\frac{1}{2}}\ptxx\ueb\}_{\eps,\,\beta},\, \{\beta^{\frac{1}{2}}\eps^{\frac{1}{2}}\pxx\ueb\}_{\eps,\,\beta}$ are bounded in $L^2((0,T)\times\R)$.
\end{itemize}
\end{lemma}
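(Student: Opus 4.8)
The plan is to combine the $L^2$-type energy estimates of Lemma \ref{lm:38} with higher-order energy identities obtained by differentiating \eqref{eq:Ro-eps-beta} and testing against suitable multiples of derivatives of $\ueb$, then to close the $L^\infty$ bound via a Gagliardo--Nirenberg interpolation in one space dimension. Concretely, I would first establish the family of $L^\infty((0,T);L^2)$ and $L^2((0,T)\times\R)$ bounds in items $i)$ and $ii)$, because the target estimate \eqref{eq:u-infty-3} will follow from the interpolation inequality
\begin{equation*}
\norm{\ueb(t,\cdot)}_{L^\infty(\R)}^2 \le C\norm{\ueb(t,\cdot)}_{L^2(\R)}\norm{\px\ueb(t,\cdot)}_{L^2(\R)},
\end{equation*}
together with Lemma \ref{lm:38} (which controls $\norm{\ueb}_{L^2}$ by $C_0$) and the bound $\norm{\beta^{1/2}\px\ueb}_{L^\infty((0,T);L^2)}\le C_0$ from item $i)$: these give $\norm{\ueb}_{L^\infty}^2 \le C_0\cdot C_0\beta^{-1/2}$, i.e. $\norm{\ueb}_{L^\infty}\le C_0\beta^{-1/4}$. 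So the real content is items $i)$ and $ii)$.

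For item $i)$, I would differentiate the PDE in $x$ and test the resulting equation for $\px\ueb$ against $\px\ueb$ itself. The delicate terms are the nonlinear convection term $\px(\ueb^2)$, which after one differentiation produces $\int \px(2\ueb\px\ueb)\px\ueb\dx$ and, upon integration by parts, a term of the form $\int \ueb(\px\ueb)^2 \px(\cdot)$ that must be absorbed using the (still rough) $L^\infty$ bound $\norm{\ueb}_{L^\infty}\le C_0\beta^{-1/4}$ — here one uses a bootstrap: assume the bound on a maximal interval, derive the differential inequality, and conclude it holds globally in $(0,T)$. The dispersive term $\beta^2\ptxxxx\ueb$ contributes, after testing, a time-derivative of $\beta^2\norm{\px\pxx\ueb}_{L^2}^2$-type quantity (already partly controlled by Lemma \ref{lm:38}), while the viscous term $\eps\pxx\ueb$ yields the good dissipation $\eps\norm{\pxx\ueb}_{L^2}^2$. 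Iterating this scheme — differentiating twice, three times in $x$ and pairing with the correct weights in $\eps$ and $\beta$ as dictated by the scaling $\beta = \mathcal{O}(\eps^4)$ — produces successively the bounds on $\beta^{3/4}\eps\pxx\ueb$ and $\beta^{3/2}\pxxx\ueb$. The weights $\beta^{1/2},\beta^{1/4}\eps,\beta^{3/4}\eps,\beta^{3/2}$ are precisely what is needed so that the assumption \eqref{eq:u0eps-1} on the initial data gives bounded data for each energy, and so that the cross terms coming from $\ptxxxx\ueb$ and $\pxx\ueb$ either have a favorable sign or are dominated by the dissipative contributions under \eqref{eq:beta-eps-2}.

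For item $ii)$, the time-derivative bounds follow by reading off $\pt\ueb = -\px\ueb^2 - \beta^2\ptxxxx\ueb + \eps\pxx\ueb$ from the equation, substituting the spatial bounds of item $i)$ and Lemma \ref{lm:38}, and multiplying by the appropriate weight; the mixed space-time derivatives $\ptx\ueb,\ptxx\ueb,\ptxxx\ueb$ are handled the same way after differentiating the equation in $x$ once, twice, thrice and isolating the time derivative. The main obstacle I expect is the bookkeeping in item $i)$: keeping track of exactly which powers of $\eps$ and $\beta$ multiply each term so that, under $\beta=\mathcal{O}(\eps^4)$, every ``bad'' term (especially those from the nonlinearity $\px\ueb^2$ after repeated differentiation, and the coupling between the $\eps\pxx$ and $\beta^2\ptxxxx$ operators) can be absorbed either into the dissipation or into a Gr\"onwall-type inequality whose constant is uniform in $\eps,\beta$. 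This is the step where the hypothesis $u_0\in L^2$ (as opposed to $L^2\cap L^4$) forces the more technical argument alluded to in the introduction, since one lacks the extra conserved $L^4$ quantity that would otherwise short-circuit the $L^\infty$ estimate.
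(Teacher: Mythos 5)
Your overall strategy diverges from the paper's and contains two genuine gaps. First, the circularity you acknowledge between \eqref{eq:u-infty-3} and item $i)$ is not resolved by your sketch: you propose to prove item $i)$ by absorbing the nonlinear term with the very bound $\norm{\ueb}_{L^\infty}\le C_0\beta^{-1/4}$ you are trying to establish, deferring to an unspecified maximal-interval bootstrap. The paper closes this loop differently and without any continuity argument: it multiplies \eqref{eq:Ro-eps-beta} by the single combined multiplier $-\beta^{\frac{1}{2}}\pxx\ueb-\beta\eps\ptxx\ueb+\eps\pt\ueb$, estimates the nonlinear terms by Young's inequality keeping the unknown quantity $\norm{\ueb}_{L^{\infty}((0,T)\times\R)}$ on the right (the time integral $\eps\int_0^t\norm{\px\ueb}^2\,ds$ being already controlled by Lemma \ref{lm:38}, so no Gr\"onwall is needed), and arrives at the self-referential energy bound $\frac{\beta^{1/2}}{2}\norm{\px\ueb(t,\cdot)}_{L^2}^2\le C_0(1+\norm{\ueb}_{L^\infty}^2)$. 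Combined with the interpolation $\ueb^2\le 2\norm{\ueb}_{L^2}\norm{\px\ueb}_{L^2}$ this yields the closed algebraic inequality $y^4\le C_0\beta^{-1/2}(1+y^2)$ for $y=\norm{\ueb}_{L^{\infty}((0,T)\times\R)}$, which is solved directly to give $y\le C_0\beta^{-1/4}$; items $i)$ and $ii)$ then follow by substituting this back into the energy estimate. You should either adopt this scheme or make the bootstrap rigorous, which requires showing the derived bound strictly improves the assumed one with constants uniform in $\eps,\beta$ --- not obvious as written.

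Second, your plan for item $ii)$ --- ``reading off'' $\pt\ueb$ from the equation --- fails because the time derivative also sits inside the highest-order term $\beta^2\ptxxxx\ueb$; one cannot isolate $\pt\ueb$ without inverting $(1+\beta^2\pxxxx)$, and even then the resulting bounds do not produce the mixed derivatives $\ptx\ueb,\ptxx\ueb,\ptxxx\ueb$ with the stated weights. In the paper these quantities come out automatically with positive sign from the same energy identity, because pairing $\eps\pt\ueb$ and $-\beta\eps\ptxx\ueb$ against $\pt\ueb+\beta^2\ptxxxx\ueb$ generates exactly $\eps\norm{\pt\ueb}^2+\beta\eps\norm{\ptx\ueb}^2+\beta^2\eps\norm{\ptxx\ueb}^2+\beta^3\eps\norm{\ptxxx\ueb}^2$. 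Likewise, your iterated differentiation of the equation in $x$ is unnecessary and counterproductive: the $\pxxx\ueb$ bound falls out of the identity $-\beta^{5/2}\int_{\R}\ptxxxx\ueb\,\pxx\ueb\,dx=\frac{\beta^{5/2}}{2}\frac{d}{dt}\norm{\pxxx\ueb(t,\cdot)}_{L^2}^2$ with no further differentiation of the nonlinearity, whereas each additional $x$-derivative of $\px\ueb^2$ would create commutator terms you have no tools to absorb at this stage.
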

\begin{proof}
Let $0<t<T$. Multiplying \eqref{eq:Ro-eps-beta} by $-\beta^{\frac{1}{2}}\pxx\ueb -\beta\eps\ptxx\ueb +\eps\pt\ueb$, we have
\begin{equation}
\label{eq:p456}
\begin{split}
&\left(-\beta^{\frac{1}{2}}\pxx\ueb -\beta\eps\ptxx\ueb +\eps\pt\ueb\right)\pt\ueb\\
&\qquad\quad +2\left(-\beta^{\frac{1}{2}}\pxx\ueb -\beta\eps\ptxx\ueb+\eps\pt\ueb\right)\ueb\px\ueb\\
&\qquad\quad +\beta^2\left(-\beta^{\frac{1}{2}}\pxx\ueb -\beta\eps\ptxx\ueb +\eps\pt\ueb\right)\ptxxxx\ueb\\
&\qquad=\eps\left(-\beta^{\frac{1}{2}}\pxx\ueb -\beta\eps\ptxx\ueb +\eps\pt\ueb\right)\pxx\ueb.
\end{split}
\end{equation}
We observe
\begin{equation}
\label{eq:int123}
\begin{split}
&\int_{\R}\left(-\beta^{\frac{1}{2}}\pxx\ueb -\beta\eps\ptxx\ueb+\eps\pt\ueb\right)\pt\ueb dx\\
&\qquad=\frac{\beta^{\frac{1}{2}}}{2}\frac{d}{dt}\norm{\px\ueb(t,\cdot)}^2_{L^2(\R)}+\beta\eps\norm{\ptx\ueb(t,\cdot)}^2_{L^2(\R)}\\
&\qquad\quad +\eps\norm{\pt\ueb(t,\cdot)}^2_{L^2(\R)}.
\end{split}
\end{equation}
Since
\begin{align*}
%\begin{split}
2\int_{\R}&\left(-\beta^{\frac{1}{2}}\pxx\ueb -\beta\eps\ptxx\ueb+\eps\pt\ueb\right)\ueb\px\ueb\\
=& -2\beta^{\frac{1}{2}}\int_{\R}\ueb\px\ueb\pxx\ueb dx  -2\beta\eps\int_{\R}\ueb\px\ueb\ptxx\ueb dx\\
&+2\eps\int_{\R}\ueb\px\ueb\pt\ueb dx,\\
%\label{eq:int125}
\beta^2\int_{\R}&\left(-\beta^{\frac{1}{2}}\pxx\ueb -\beta\eps\ptxx\ueb+\eps\pt\ueb\right)\ptxxxx\ueb dx \\
=& \frac{\beta^{\frac{5}{2}}}{2}\frac{d}{dt}\norm{\pxxx\ueb(t,\cdot)}^2_{L^2(\R)}+ \beta^3\eps\norm{\ptxxx\ueb(t,\cdot)}^2_{L^2(\R)}
 + \beta^2\eps\norm{\ptxx\ueb(t,\cdot)}^2_{L^2(\R)},\\
%\label{eq:int127}
\eps\int_{\R}&\left(-\beta^{\frac{1}{2}}\pxx\ueb -\beta\eps\ptxx\ueb+\eps\pt\ueb\right)\pxx\ueb dx \\
=& -\beta^{\frac{1}{2}}\eps\norm{\pxx\ueb(t,\cdot)}^2_{L^2(\R)}-\frac{\beta\eps^2}{2}\frac{d}{dt}\norm{\pxx\ueb(t,\cdot)}^2_{L^2(\R)}
 -\frac{\eps^2}{2}\frac{d}{dt}\norm{\px\ueb(t,\cdot)}^2_{L^2(\R)}.
\end{align*}
Integrating \eqref{eq:p456} on $\R$ we have
\begin{equation}
\label{eq:501}
\begin{split}
&\frac{d}{dt}\left(\frac{\beta^{\frac{1}{2}}+\eps^2}{2}\norm{\px\ueb(t,\cdot)}^2_{L^2(\R)} 
+\frac{\beta^{\frac{5}{2}}}{2}\norm{\pxxx\ueb(t,\cdot)}^2_{L^2(\R)}\right)\\
&\qquad\quad +\frac{\beta\eps^2}{2}\frac{d}{dt}\norm{\pxx\ueb(t,\cdot)}^2_{L^2(\R)}+\beta\eps\norm{\ptx\ueb(t,\cdot)}^2_{L^2(\R)}\\
&\qquad\quad  + \beta^3\eps\norm{\ptxxx\ueb(t,\cdot)}^2_{L^2(\R)} + \beta^{\frac{1}{2}}\eps\norm{\pxx\ueb(t,\cdot)}^2_{L^2(\R)}\\
&\qquad\quad +\eps\norm{\pt\ueb(t,\cdot)}^2_{L^2(\R)} + \beta^2\eps\norm{\ptxx\ueb(t,\cdot)}^2_{L^2(\R)}\\
&\qquad =2\beta^{\frac{1}{2}}\int_{\R}\ueb\px\ueb\pxx\ueb dx  +2\beta\eps\int_{\R}\ueb\px\ueb\ptxx\ueb dx\\
&\qquad\quad -2\eps\int_{\R}\ueb\px\ueb\pt\ueb dx.
\end{split}
\end{equation}
Due to \eqref{eq:beta-eps-2} and the Young inequality,
\begin{equation}
\label{eq:young24}
\begin{split}
2\beta^{\frac{1}{2}}\int_{\R}&\vert\ueb\vert\vert\px\ueb\vert\vert\pxx\ueb\vert dx
= \beta^{\frac{1}{2}}\int_{\R}\left\vert\frac{2\ueb\px\ueb}{\eps}\right\vert \vert\eps\pxx\ueb\vert dx\\
&\le \frac{2\beta^{\frac{1}{2}}}{\eps}\int_{\R}\ueb^2 (\px\ueb)^2dx + \frac{\beta^{\frac{1}{2}}\eps}{2}\norm{\pxx\ueb(t,\cdot)}^2_{L^2(\R)}\\
& \le C_{0}\eps\norm{\ueb}^2_{L^{\infty}((0,T)\times\R)}\norm{\px\ueb(t,\cdot)}^2_{L^2(\R)} + \frac{\beta^{\frac{1}{2}}\eps}{2}\norm{\pxx\ueb(t,\cdot)}^2_{L^2(\R)},\\
%\label{eq:young25}
2\beta\eps\int_{\R}&\vert\ueb\vert\vert\px\ueb\vert\vert\ptxx\ueb\vert dx=\eps\int_{\R}\vert 2\ueb\px\ueb\vert \vert\beta\ptxx\ueb\vert dx\\
& \le 2\eps\int_{\R}\ueb^2(\px\ueb)^2 dx + \frac{\beta^2\eps}{2}\norm{\ptxx\ueb(t,\cdot)}^2_{L^2(\R)}\\
& \le 2\eps\norm{\ueb}^2_{L^{\infty}((0,T)\times\R)}\norm{\px\ueb(t,\cdot)}^2_{L^2(\R)} + \frac{\beta^2\eps}{2}\norm{\ptxx\ueb(t,\cdot)}^2_{L^2(\R)},\\
\eps\int_{\R}&\vert 2\ueb\vert\vert\px\ueb\vert\vert\pt\ueb\vert dx \le2\eps\int_{\R}\ueb^2(\px\ueb)^2dx +\frac{\eps}{2}\norm{\pt\ueb(t,\cdot)}^2_{L^2(\R)}\\
& \le 2\eps\norm{\ueb}^2_{L^{\infty}((0,T)\times\R)}\norm{\px\ueb(t,\cdot)}^2_{L^2(\R)}+\frac{\eps}{2}\norm{\pt\ueb(t,\cdot)}^2_{L^2(\R)}.
\end{split}
\end{equation}
From \eqref{eq:501} and \eqref{eq:young24} we gain
\begin{align*}
&\frac{d}{dt}\left(\frac{\beta^{\frac{1}{2}}+\eps^2}{2}\norm{\px\ueb(t,\cdot)}^2_{L^2(\R)} +\frac{\beta^{\frac{5}{2}}}{2}\norm{\pxxx\ueb(t,\cdot)}^2_{L^2(\R)}\right)\\
&\qquad\quad +\frac{\beta\eps^2}{2}\frac{d}{dt}\norm{\pxx\ueb(t,\cdot)}^2_{L^2(\R)}+\frac{\beta\eps}{2}\norm{\ptx\ueb(t,\cdot)}^2_{L^2(\R)}\\
&\qquad\quad  +\beta^3\eps\norm{\ptxxx\ueb(t,\cdot)}^2_{L^2(\R)} + \frac{\beta{\frac{1}{2}}\eps}{2}\norm{\pxx\ueb(t,\cdot)}^2_{L^2(\R)}\\
&\qquad\quad +\frac{\eps}{2}\norm{\pt\ueb(t,\cdot)}^2_{L^2(\R)} + \frac{\beta^2\eps}{2}\norm{\ptxx\ueb(t,\cdot)}^2_{L^2(\R)}\\
&\qquad\le C_{0}\eps \norm{\ueb}^2_{L^{\infty}((0,T)\times\R)}\norm{\px\ueb(t,\cdot)}^2_{L^2(\R)}.
\end{align*}
An integration on $(0,t)$, \eqref{eq:u0eps-1} and \eqref{eq:l-2-u1} give
\begin{equation}
\label{eq:v3256}
\begin{split}
&\frac{\beta^{\frac{1}{2}}+\eps^2}{2}\norm{\px\ueb(t,\cdot)}^2_{L^2(\R)}+\frac{\beta^{\frac{5}{2}}}{2}\norm{\pxxx\ueb(t,\cdot)}^2_{L^2(\R)}\\
&\qquad\quad+\frac{\beta\eps^2}{2}\norm{\pxx\ueb(t,\cdot)}^2_{L^2(\R)}+\frac{\beta\eps}{2}\int_{0}^{t}\norm{\ptx\ueb(s,\cdot)}^2_{L^2(\R)}ds\\
&\qquad\quad+\beta^3\eps\int_{0}^{t}\norm{\ptxxx\ueb(s,\cdot)}^2_{L^2(\R)}ds+\frac{\beta^{\frac{1}{2}}\eps}{2}\int_{0}^{t}\norm{\pxx\ueb(s,\cdot)}^2_{L^2(\R)}ds\\
&\qquad\quad+\frac{\eps}{2}\int_{0}^{t}\norm{\pt\ueb(s,\cdot)}^2_{L^2(\R)}ds+ \frac{\beta^2\eps}{2}\int_{0}^{t}\norm{\ptxx\ueb(t,\cdot)}^2_{L^2(\R)}ds\\
&\qquad\le C_{0} + C_{0}\eps\norm{\ueb}^2_{L^{\infty}((0,T)\times\R)}\int_{0}^{t}\norm{\px\ueb(s,\cdot)}^2_{L^2(\R)}ds\\
&\qquad\le C_{0}\left(1+\norm{\ueb}^2_{L^{\infty}((0,T)\times\R)}\right).
\end{split}
\end{equation}
We prove \eqref{eq:u-infty-3}. Due to \eqref{eq:l-2-u1}, \eqref{eq:v3256} and the H\"older inequality,
\begin{align*}
\ueb^{2}(t,x) =&2\int_{-\infty}^{x}\ueb\px\ueb dx \le 2 \int_{\R}\vert\ueb\vert\vert\px\ueb\vert dx \\
\le& 2\norm{\ueb(t,\cdot)}_{L^2(\R)}\norm{\px\ueb(t,\cdot)}_{L^2(\R)}\\
\le &\frac{C_{0}}{\beta^{\frac{1}{4}}}\sqrt{C_{0} + C_{0}\norm{\ueb}^2_{L^{\infty}((0,T)\times\R)}},
\end{align*}
that is
\begin{equation}
\label{eq:u596}
\norm{\ueb}^4_{L^{\infty}((0,T)\times\R)}\le \frac{C_{0}}{\beta^{\frac{1}{2}}}\left(1+\norm{\ueb}^2_{L^{\infty}((0,T)\times\R)}\right).
\end{equation}
Introducing the notation
\begin{equation}
\label{eq:u678}
y=\norm{\ueb}_{L^{\infty}((0,T)\times\R)},\quad \delta=\beta^{\frac{1}{2}},
\end{equation}
\eqref{eq:u596} reads
\begin{equation*}
y^{4}\le\frac{C_{0}}{\delta}(1+y^2).
\end{equation*}
Arguing as \cite[Lemma $2.3$]{Cd1}, we have
\begin{equation}
\label{eq:sol}
y\le C_{0}\delta^{-\frac{1}{2}}.
\end{equation}
\eqref{eq:u-infty-3} follows from \eqref{eq:u678} and \eqref{eq:sol}.

\eqref{eq:u-infty-3} and \eqref{eq:v3256} give
\begin{align*}
&\frac{\beta^{\frac{1}{2}}+\eps^2}{2}\norm{\px\ueb(t,\cdot)}^2_{L^2(\R)}+\frac{\beta^{\frac{5}{2}}}{2}\norm{\pxxx\ueb(t,\cdot)}^2_{L^2(\R)}\\
&\qquad\quad+\frac{\beta\eps^2}{2}\norm{\pxx\ueb(t,\cdot)}^2_{L^2(\R)}+\frac{\beta\eps}{2}\int_{0}^{t}\norm{\ptx\ueb(s,\cdot)}^2_{L^2(\R)}ds\\
&\qquad\quad+\beta^3\eps\int_{0}^{t}\norm{\ptxxx\ueb(s,\cdot)}^2_{L^2(\R)}ds+\frac{\beta^{\frac{1}{4}}\eps}{2}\int_{0}^{t}\norm{\pxx\ueb(s,\cdot)}^2_{L^2(\R)}ds\\
&\qquad\quad+\frac{\eps}{2}\int_{0}^{t}\norm{\pt\ueb(s,\cdot)}^2_{L^2(\R)}ds+ \frac{\beta^2\eps}{2}\int_{0}^{t}\norm{\ptxx\ueb(t,\cdot)}^2_{L^2(\R)}ds\le C_{0}\beta^{-\frac{1}{2}},
\end{align*}
that is
\begin{align*}
&\frac{\beta+\beta^{\frac{1}{2}}\eps^2}{2}\norm{\px\ueb(t,\cdot)}^2_{L^2(\R)}+\frac{\beta^3}{2}\norm{\pxxx\ueb(t,\cdot)}^2_{L^2(\R)}\\
&\qquad\quad +\frac{\beta^{\frac{3}{2}}\eps^2}{2}\norm{\pxx\ueb(t,\cdot)}^2_{L^2(\R)}+\frac{\beta^{\frac{3}{2}}\eps}{2}\int_{0}^{t}\norm{\ptx\ueb(s,\cdot)}^2_{L^2(\R)}ds\\
&\qquad\quad+\beta^{\frac{7}{2}}\eps\int_{0}^{t}\norm{\ptxxx\ueb(s,\cdot)}^2_{L^2(\R)}ds +\frac{\beta\eps}{2}\int_{0}^{t}\norm{\pxx\ueb(s,\cdot)}^2_{L^2(\R)}ds\\
&\qquad\quad+\frac{\beta^{\frac{1}{2}}\eps}{2}\int_{0}^{t}\norm{\pt\ueb(s,\cdot)}^2_{L^2(\R)}ds + \frac{\beta^{\frac{5}{2}}\eps}{2}\int_{0}^{t}\norm{\ptxx\ueb(t,\cdot)}^2_{L^2(\R)}ds\le C_0.
\end{align*}
Hence,
\begin{align*}
\beta^{\frac{1}{2}}\norm{\px\ueb(t,\cdot)}_{L^2(\R)} \le &C_{0},\\
\beta^{\frac{1}{4}}\eps\norm{\px\ueb(t,\cdot)}_{L^2(\R)} \le &C_{0},\\
\beta^{\frac{3}{2}}\norm{\pxxx\ueb(t,\cdot)}_{L^2(\R)} \le &C_{0},\\
\beta^{\frac{3}{4}}\eps\norm{\pxx\ueb(t,\cdot)}_{L^2(\R)} \le &C_{0},\\
\beta^{\frac{3}{2}}\eps\int_{0}^{t}\norm{\ptx\ueb(s,\cdot)}^2_{L^2(\R)}ds \le &C_{0},\\
\beta^{\frac{7}{2}}\eps\int_{0}^{t}\norm{\ptxxx\ueb(t,\cdot)}^2_{L^2(\R)}ds \le &C_{0},\\
\beta\eps\int_{0}^{t}\norm{\pxx\ueb}^2_{L^2((\R)}ds \le &C_{0},\\
\beta^{\frac{1}{2}}\eps\int_{0}^{t}\norm{\pt\ueb(s,\cdot)}^2_{L^2(\R)}ds \le &C_{0},\\
\beta^{\frac{5}{2}}\eps\int_{0}^{t}\norm{\ptxx\ueb(t,\cdot)}^2_{L^2(\R)} ds\le &C_{0},
\end{align*}
for every $0<t<T$.
\end{proof}
To prove Theorem \ref{th:main-1}. The following technical lemma is needed  \cite{Murat:Hneg}.
\begin{lemma}
\label{lm:1}
Let $\Omega$ be a bounded open subset of $
\R^2$. Suppose that the sequence $\{\mathcal
L_{n}\}_{n\in\mathbb{N}}$ of distributions is bounded in
$W^{-1,\infty}(\Omega)$. Suppose also that
\begin{equation*}
\mathcal L_{n}=\mathcal L_{1,n}+\mathcal L_{2,n},
\end{equation*}
where $\{\mathcal L_{1,n}\}_{n\in\mathbb{N}}$ lies in a
compact subset of $H^{-1}_{loc}(\Omega)$ and
$\{\mathcal L_{2,n}\}_{n\in\mathbb{N}}$ lies in a
bounded subset of $\mathcal{M}_{loc}(\Omega)$. Then $\{\mathcal
L_{n}\}_{n\in\mathbb{N}}$ lies in a compact subset of $H^{-1}_{loc}(\Omega)$.
\end{lemma}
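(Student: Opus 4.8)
The plan is to reproduce the classical argument behind Murat's $H^{-1}$-compactness lemma: pass to a subsequence along which $\mathcal L_n$ converges in a \emph{weaker} negative Sobolev space $W^{-1,q}_{\loc}(\Omega)$ with $1<q<2$, and then interpolate that convergence against the uniform $W^{-1,\infty}$-bound to upgrade it to convergence in $H^{-1}_{\loc}(\Omega)$. Since $H^{-1}_{\loc}(\Omega)$ is metrizable, it suffices to show that every subsequence of $\{\mathcal L_n\}$ has a further subsequence converging in $H^{-1}_{\loc}(\Omega)$. First I would localize: fix a relatively compact open set $\Omega'$ in $\Omega$ and a cut-off $\test\in C^\infty_c(\Omega)$ with $\test\equiv 1$ on $\Omega'$ and $\supp\test\subset B$ for some fixed ball $B$. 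Multiplication by $\test$ is bounded on every $W^{-1,p}$ and sends $\CMloc(\Omega)$ into measures supported in $B$, so it is enough to study $\{\test\mathcal L_n\}$, which is bounded in $W^{-1,\infty}(\R^2)$ with supports in $B$. As $B$ is bounded, $W^{-1,\infty}\hookrightarrow W^{-1,r}$ on $B$ for every finite $r$; fix some $r>2$, so that $\{\test\mathcal L_n\}$ is bounded in $W^{-1,r}(\R^2)$, hence also in $H^{-1}(\R^2)$.

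Next I would extract a $W^{-1,q}$-convergent subsequence. Pick $q\in(1,2)$, so $q'>2$; in two space dimensions Morrey's inequality together with Arzelà--Ascoli gives a compact embedding $W^{1,q'}_0(B)\hookrightarrow\hookrightarrow C_0(B)$, and dualizing (Schauder's theorem) yields a compact embedding of the measures $\CM(B)=C_0(B)^\ast$ into $W^{-1,q}(B)$. Consequently $\{\test\mathcal L_{2,n}\}$, being bounded in $\CM(B)$, is precompact in $W^{-1,q}(\R^2)$, while $\{\test\mathcal L_{1,n}\}$, precompact in $H^{-1}(\R^2)\hookrightarrow W^{-1,q}(\R^2)$, is precompact there as well. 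Hence $\{\test\mathcal L_n\}$ is precompact in $W^{-1,q}(\R^2)$, and a given subsequence admits a further subsequence that is Cauchy in $W^{-1,q}(\R^2)$.

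Finally I would interpolate. The spaces $W^{-1,p}(\R^2)$, $1<p<\infty$, form an interpolation scale, so with $\theta\in(0,1)$ determined by $\tfrac12=\tfrac{1-\theta}{q}+\tfrac{\theta}{r}$ there is a constant $C$ with
\[
\norm{w}_{H^{-1}(\R^2)}\le C\,\norm{w}_{W^{-1,q}(\R^2)}^{1-\theta}\,\norm{w}_{W^{-1,r}(\R^2)}^{\theta}.
\]
Applying this to the differences $w=\test(\mathcal L_n-\mathcal L_m)$ along the extracted subsequence, the second factor stays bounded (by the $W^{-1,r}$-bound) while the first tends to $0$, so the subsequence is Cauchy, hence convergent, in $H^{-1}(\R^2)$. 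Restricting to $\Omega'$ gives convergence in $H^{-1}(\Omega')$, and a diagonal argument over an exhaustion of $\Omega$ by relatively compact open sets produces a subsequence converging in $H^{-1}_{\loc}(\Omega)$, which is exactly the asserted relative compactness.

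The hard part is not the bookkeeping but the two analytic inputs. The first is the compact embedding $\CM(B)\hookrightarrow\hookrightarrow W^{-1,q}(B)$ for $q<2$, which is genuinely two-dimensional (it rests on $W^{1,q'}_0\hookrightarrow C_0$ for $q'>2$) and is precisely where the hypothesis ``$\mathcal L_{2,n}$ bounded in $\CMloc$'' is used. The second is the negative-exponent interpolation inequality, which is where the global $W^{-1,\infty}$-bound on $\{\mathcal L_n\}$ enters. The only point demanding mild care is to keep all supports inside the fixed ball $B$, so that the inclusions $W^{-1,\infty}\hookrightarrow W^{-1,r}$ and the interpolation estimate are legitimately applied on a bounded set.
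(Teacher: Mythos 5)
The paper does not prove this lemma at all: it is quoted verbatim as a known result with a citation to Murat's 1981 paper (whose very title is the statement that the positive cone of $H^{-1}$ injects compactly into $W^{-1,q}$ for $q<2$). Your reconstruction is precisely the classical argument from that reference --- localization, the compact embedding $\mathcal{M}(B)\hookrightarrow W^{-1,q}(B)$ for $q<2$ in two dimensions obtained by dualizing $W^{1,q'}_0(B)\hookrightarrow\hookrightarrow C_0(B)$, and interpolation between $W^{-1,q}$ and $W^{-1,r}$ with $q<2<r$ using the uniform $W^{-1,\infty}$ bound --- and I see no gap in it.
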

Moreover, we consider the following definition.
\begin{definition}
A pair of functions $(\eta, q)$ is called an  entropy--entropy flux pair if \\
$\eta :\R\to\R$ is a $C^2$ function and $q :\R\to\R$ is defined by
\begin{equation*}
q(u)=2\int_{0}^{u} \xi\eta'(\xi) d\xi.
\end{equation*}
An entropy-entropy flux pair $(\eta,\, q)$ is called  convex/compactly supported if, in addition, $\eta$ is convex/compactly supported.
\end{definition}
We begin by proving the following result.
\begin{lemma}\label{lm:259}
Assume that \eqref{eq:uo-l2}, \eqref{eq:u0eps-1} and \eqref{eq:beta-eps-2} hold. Then for any compactly
supported entropy–-entropy flux pair $(\eta, \,q)$, there exist two sequences $\{\eps_{n}\}_{n\in\N},\,\{\beta_{n}\}_{n\in\N}$, with $\eps_n,\,\beta_n\to0$, and a limit function
\begin{equation*}
u\in L^{\infty}((0,T);L^2(\R)),
\end{equation*}
such that
\begin{align}
\label{eq:con-u-1}
&u_{\eps_{n},\,\beta_{n}}\to u \quad \textrm{in $L^p_{loc}((0,T)\times\R)$, for each $1\le p<2$},\\
\label{eq:u-dist12}
&u \quad \textrm{is a distributional solution of \eqref{eq:BU}}.
\end{align}
\end{lemma}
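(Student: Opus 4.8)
The plan is to establish compactness of the approximating family $\{\ueb\}$ via compensated compactness (the Tartar--DiPerna $L^p$ framework), and the central task is to show that for every compactly supported entropy--entropy flux pair $(\eta,q)$ the sequence $\pt\eta(\ueb)+\px q(\ueb)$ is precompact in $H^{-1}_{\loc}((0,T)\times\R)$. First I would compute, from \eqref{eq:Ro-eps-beta}, the entropy production identity: multiplying the equation by $\eta'(\ueb)$ gives
\begin{equation*}
\pt\eta(\ueb)+\px q(\ueb)=\eps\pxx\eta(\ueb)-\eps\eta''(\ueb)(\px\ueb)^2-\beta^2\eta'(\ueb)\ptxxxx\ueb.
\end{equation*}
I would then split the right-hand side as $\CL_{1,\eps,\beta}+\CL_{2,\eps,\beta}$ in the sense of Lemma \ref{lm:1}, isolating the terms that are compact in $H^{-1}_{\loc}$ from those that are merely bounded in $\CMloc$.

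For the term $\eps\pxx\eta(\ueb)=\eps\px(\eta'(\ueb)\px\ueb)$: since $\eta'$ is bounded and $\eps\norm{\px\ueb}_{L^2((0,T)\times\R)}\to 0$ by Lemma \ref{lm:38} (indeed $\eps^{1/2}\px\ueb$ is $L^2$-bounded, so $\eps\px\ueb=\eps^{1/2}(\eps^{1/2}\px\ueb)\to0$ in $L^2$), this term is $\px$ of something going to zero strongly in $L^2$, hence it is compact (in fact convergent to zero) in $H^{-1}_{\loc}$. The term $-\eps\eta''(\ueb)(\px\ueb)^2$ is bounded in $L^1((0,T)\times\R)$ because $\eta''$ is bounded (compact support) and $\eps\int(\px\ueb)^2\le C_0$ by \eqref{eq:l-2-u1}; so it sits in a bounded subset of $\CMloc$. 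The delicate part is the dispersive term $-\beta^2\eta'(\ueb)\ptxxxx\ueb$: I would rewrite it using the product rule to move derivatives around, e.g.
\begin{equation*}
\beta^2\eta'(\ueb)\ptxxxx\ueb=\beta^2\px\!\left(\eta'(\ueb)\ptxxx\ueb\right)-\beta^2\eta''(\ueb)\px\ueb\,\ptxxx\ueb,
\end{equation*}
and further integrate by parts in $t$ or $x$ as needed so that every resulting piece is either $\partial_t$ or $\partial_x$ of a product of a bounded factor with a factor carrying enough powers of $\beta$ to be $L^2$-small (using the bounds $\beta^{3/2}\pxxx\ueb$, $\beta^{1/2}\px\ueb$ bounded in $L^\infty_tL^2_x$ from Lemma \ref{lm:50}, item $i)$, together with $\beta^2\norm{\pxx\ueb}_{L^2}^2\le C_0$ and $\beta=\OO(\eps^4)$), while the lower-order leftover is controlled in $L^1$, hence in $\CMloc$. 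Concretely, $\beta^2\px(\eta'(\ueb)\ptxxx\ueb)=\px(\beta^{1/2}\eta'(\ueb)\cdot\beta^{3/2}\ptxxx\ueb)$ has the second factor $L^\infty_tL^2_x$-bounded and an extra $\beta^{1/2}\to0$, so it is $\px$ of an $L^2$-null sequence; the remaining commutator terms, after suitable integration by parts, distribute the available negative powers of $\beta$ against the controlled quantities and land in $L^1$.

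Granting this decomposition, Lemma \ref{lm:1} yields that $\pt\eta(\ueb)+\px q(\ueb)$ is precompact in $H^{-1}_{\loc}((0,T)\times\R)$ for every convex compactly supported $\eta$; one also needs the $W^{-1,\infty}$-boundedness of the whole distribution, which follows from the $L^\infty$ bound \eqref{eq:u-infty-3} rewritten appropriately — here the hypothesis $\beta=\OO(\eps^4)$ is exactly what keeps $\beta^{-1/4}\le C_0\eps^{-1}$ from spoiling the needed estimates, and combined with the $L^2$ bound this places $\{\ueb\}$ in a bounded set of $L^\infty_tL^2_x$ so that, after passing to a subsequence, $\ueb\rightharpoonup u$ weakly-$\star$ and the Young-measure/div-curl argument of Tartar--DiPerna (in the $L^p$, $p<2$, formulation of \cite{CRS}, cf. \cite[Theorem 7.1]{CRS}) forces the Young measure to be a Dirac mass, giving the strong convergence \eqref{eq:con-u-1} in $L^p_{\loc}$ for all $1\le p<2$. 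Finally, passing to the limit in the weak formulation of \eqref{eq:Ro-eps-beta} — the term $\px\ueb^2\to\px u^2$ by the strong $L^p$ convergence (note $\ueb^2$ is controlled in $L^1_{\loc}$ since $\ueb\in L^\infty_tL^2_x$), and the dissipative and dispersive terms vanish since they carry positive powers of $\eps$ or $\beta$ against bounded quantities — gives \eqref{eq:u-dist12}, i.e.\ $u$ is a distributional solution of \eqref{eq:BU}. The main obstacle is the bookkeeping in the dispersive term: one must choose the integrations by parts so that each summand genuinely falls into one of the two classes of Lemma \ref{lm:1}, and this is precisely where the full list of weighted estimates in Lemma \ref{lm:50} is used.
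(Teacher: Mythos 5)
You take essentially the same route as the paper: the same entropy production identity and the same four-term splitting (the two viscous terms plus $\px\bigl(\beta^2\eta'(\ueb)\ptxxx\ueb\bigr)$ and $-\beta^2\eta''(\ueb)\px\ueb\,\ptxxx\ueb$), Murat's interpolation lemma, Schonbek's $L^p$ compensated compactness, and passage to the limit in the weak formulation. The one concrete slip is in your ``concretely'' step for the dispersive term: Lemma \ref{lm:50} bounds $\beta^{3/2}\pxxx\ueb$ in $L^{\infty}((0,T);L^2(\R))$, \emph{not} $\beta^{3/2}\ptxxx\ueb$; the mixed fourth derivative is controlled only through the $L^2((0,T)\times\R)$ bound on $\beta^{7/4}\eps^{1/2}\ptxxx\ueb$. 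This does not sink the argument: under \eqref{eq:beta-eps-2} one has $\beta^2\norm{\ptxxx\ueb}_{L^2((0,T)\times\R)}\le C_0\,\beta^{1/4}\eps^{-1/2}\le C_0\,\eps^{1/2}\to 0$, so the divergence term still tends to $0$ in $H^{-1}$. Likewise, for the remaining term no further integration by parts is needed: Cauchy--Schwarz together with \eqref{eq:l-2-u1} and the same $\ptxxx$ estimate gives $\beta^2\norm{\px\ueb}_{L^2((0,T)\times\R)}\norm{\ptxxx\ueb}_{L^2((0,T)\times\R)}\le C_0\,\beta^{1/4}\eps^{-1}$, which under $\beta=\OO(\eps^4)$ is merely \emph{bounded}, not vanishing --- but boundedness in $L^1$, hence in $\CMloc$, is all that Lemma \ref{lm:1} requires here; vanishing of this term is what needs the stronger hypothesis $\beta=o(\eps^4)$ and is used only for the entropy inequality in Lemma \ref{lm:452}.
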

\begin{proof}
Let us consider a compactly supported entropy--entropy flux pair $(\eta, q)$. Multiplying \eqref{eq:Ro-eps-beta} by $\eta'(\ueb)$, we have
\begin{align*}
\pt\eta(\ueb) + \px q(\ueb) =&\eps \eta'(\ueb) \pxx\ueb +\beta^2\eta'(\ueb)\ptxxxx\ueb \\
=& I_{1,\,\eps,\,\beta}+I_{2,\,\eps,\,\beta}+ I_{3,\,\eps,\,\beta} + I_{4,\,\eps,\,\beta},
\end{align*}
where
\begin{equation}
\begin{split}
\label{eq:12000}
I_{1,\,\eps,\,\beta}&=\px(\eps\eta'(\ueb)\px\ueb),\\
I_{2,\,\eps,\,\beta}&= -\eps\eta''(\ueb)(\px\ueb)^2,\\
I_{3,\,\eps,\,\beta}&= \px(\beta^2\eta'(\ueb)\ptxxx\ueb),\\
I_{4,\,\eps,\,\beta}&= -\beta\eta''(\ueb)\px\ueb\ptxxx\ueb.
\end{split}
\end{equation}
Fix $T>0$. Arguing as \cite[Lemma $3.2$]{Cd2}, we have that $I_{1,\,\eps,\,\beta}\to0$ in $H^{-1}((0,T) \times\R)$, and $\{I_{2,\,\eps,\,\beta}\}_{\eps,\beta >0}$ is bounded in $L^1((0,T)\times\R)$.\\
We claim that
\begin{equation*}
I_{3,\,\eps,\,\beta}\to0 \quad \text{in $H^{-1}((0,T) \times\R),\,T>0,$ as $\eps\to 0$.}
\end{equation*}
By \eqref{eq:beta-eps-2} and Lemma \ref{lm:50},
\begin{align*}
&\norm{ \beta^2\eta'(\ueb)\ptxxx\ueb}^2_{L^2((0,T)\times\R)}\\
&\qquad\le \beta^4 \norm{\eta'}_{L^{\infty}(\R)}\norm{\ptxxx\ueb}^2_{L^2((0,T)\times\R)}\\
&\qquad= \norm{\eta'}_{L^{\infty}(\R)}\frac{\beta^4\eps}{\eps}\norm{\ptxxx\ueb}^2_{L^2((0,T)\times\R)}\\
&\qquad=\norm{\eta'}_{L^{\infty}(\R)}\frac{\beta^{\frac{1}{2}}\beta^{\frac{7}{2}}\eps}{\eps}\norm{\ptxxx\ueb}^2_{L^2((0,T)\times\R)}
\le C_{0}\norm{\eta'}_{L^{\infty}(\R)}\eps\to0.
\end{align*}
We have that
\begin{equation*}
\{I_{4,\,\eps,\,\beta}\}_{\eps,\beta>0} \quad \text{is bounded in $L^1((0,T) \times\R),\,T>0$.}
\end{equation*}
Thanks to \eqref{eq:beta-eps-2}, Lemmas \ref{lm:38}, \ref{lm:50} and the H\"older inequality,
\begin{align*}
&\norm{\beta^2\eta''(\ueb)\px\ueb\ptxxx\ueb}_{L^1((0,T)\times\R)}\\
&\qquad\le\beta^2\norm{\eta''}_{L^{\infty}(\R)}\int_{0}^{T}\!\!\!\int_{\R}\vert\px\ueb\ptxxx\ueb\vert dsdx\\
&\qquad=\norm{\eta''}_{L^{\infty}(\R)}\frac{\beta^2\eps}{\eps}\norm{\px\ueb}_{L^2((0,T)\times\R)}\norm{\ptxxx\ueb}_{L^2((0,T)\times\R)}\\
&\qquad=\norm{\eta''}_{L^{\infty}(\R)}\frac{\beta^{\frac{1}{4}}\beta^{\frac{7}{4}}\eps}{\eps}\norm{\px\ueb}_{L^2((0,T)\times\R)}
\norm{\ptxxx\ueb}_{L^2((0,T)\times\R)}\le C_{0}\norm{\eta''}_{L^{\infty}(\R)}.
\end{align*}
Therefore, \eqref{eq:con-u-1} follows from Lemmas \ref{lm:38}, \ref{lm:1} and the $L^p$ compensated compactness of \cite{SC}.\\
Arguing as \cite[Theorem $2.1$]{Cd5}, we have \eqref{eq:u-dist12}.
\end{proof}
Following \cite{LN}, we prove the following result.
\begin{lemma}\label{lm:452}
Assume that \eqref{eq:uo-l2}, \eqref{eq:u0eps-1} and \eqref{eq:beta-eps-4} hold. Then  for any compactly
supported entropy–-entropy flux pair $(\eta, \,q)$, there exist two sequences $\{\eps_{n}\}_{n\in\N},\,\{\beta_{n}\}_{n\in\N}$, with $\eps_n,\,\beta_n\to0$, and a limit function
\begin{equation*}
u\in L^{\infty}((0,T);L^2(\R)),
\end{equation*}
such that
\eqref{eq:con-u-1} holds and
\begin{equation}
\label{eq:u-entro-sol-12}
u \quad \textrm{is the unique entropy solution of \eqref{eq:BU}}.
\end{equation}
\end{lemma}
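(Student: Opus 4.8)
The plan is to deduce the statement from Lemma~\ref{lm:259}, which is available here because \eqref{eq:beta-eps-4} implies \eqref{eq:beta-eps-2}. Fixing the given compactly supported entropy--entropy flux pair, that lemma supplies sequences $\{\eps_n\}_{n\in\N}$, $\{\beta_n\}_{n\in\N}$ with $\eps_n,\beta_n\to0$, a limit $u\in L^\infty((0,T);L^2(\R))$, the strong convergence \eqref{eq:con-u-1} (hence, after passing to a further subsequence which we do not relabel, $\ueb\to u$ a.e.\ on $(0,T)\times\R$), and the fact that $u$ is a distributional solution of \eqref{eq:BU}. So the only remaining task is \eqref{eq:u-entro-sol-12}, and by uniqueness of the \Kruzkov solution of the Burgers equation it is enough to show that $u$ satisfies $\pt\eta(u)+\px q(u)\le0$ in $\Dp((0,T)\times\R)$ for \emph{every} convex entropy--entropy flux pair $(\eta,q)$.

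For this I would, exactly as in the proof of Lemma~\ref{lm:259}, multiply \eqref{eq:Ro-eps-beta} by $\eta'(\ueb)$ and get
\begin{equation*}
\pt\eta(\ueb)+\px q(\ueb)=I_{1,\eps,\beta}+I_{2,\eps,\beta}+I_{3,\eps,\beta}+I_{4,\eps,\beta},
\end{equation*}
with $I_{1,\eps,\beta},\dots,I_{4,\eps,\beta}$ as in \eqref{eq:12000}. Three of the four terms are handled as in Lemma~\ref{lm:259}: by Lemmas~\ref{lm:38} and~\ref{lm:50}, $I_{1,\eps,\beta}\to0$ and $I_{3,\eps,\beta}\to0$ in $H^{-1}((0,T)\times\R)$, while the convexity of $\eta$ now provides the crucial sign $I_{2,\eps,\beta}=-\eps\eta''(\ueb)(\px\ueb)^2\le0$ (with $\{I_{2,\eps,\beta}\}$ still bounded in $L^1((0,T)\times\R)$ by Lemma~\ref{lm:38}). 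The place where the stronger scaling \eqref{eq:beta-eps-4} is actually used is the term $I_{4,\eps,\beta}$: instead of merely bounding it in $L^1$ as in Lemma~\ref{lm:259}, I would combine H\"older's inequality with $\norm{\px\ueb}_{L^2((0,T)\times\R)}\le C_0\eps^{-1/2}$ (Lemma~\ref{lm:38}) and $\norm{\ptxxx\ueb}_{L^2((0,T)\times\R)}\le C_0\beta^{-7/4}\eps^{-1/2}$ (Lemma~\ref{lm:50}) to get
\begin{align*}
\norm{I_{4,\eps,\beta}}_{L^1((0,T)\times\R)}&\le\beta^2\norm{\eta''}_{L^\infty(\R)}\norm{\px\ueb}_{L^2((0,T)\times\R)}\norm{\ptxxx\ueb}_{L^2((0,T)\times\R)}\\
&\le C_0\norm{\eta''}_{L^\infty(\R)}\,\frac{\beta^{\frac{1}{4}}}{\eps},
\end{align*}
and then \eqref{eq:beta-eps-4}, i.e.\ $\beta^{1/4}=o(\eps)$, forces $I_{4,\eps,\beta}\to0$ in $L^1((0,T)\times\R)$, hence in $\Dp((0,T)\times\R)$.

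It then remains to pass to the limit along $\{(\eps_n,\beta_n)\}$. Since $\ueb\to u$ a.e.\ and is bounded in $L^\infty((0,T);L^2(\R))$, one gets $\eta(\ueb)\to\eta(u)$ and $q(\ueb)\to q(u)$ in $\Dp((0,T)\times\R)$ (as for the nonlinear flux when passing to the limit in Lemma~\ref{lm:259}); together with $I_{1,\eps,\beta}+I_{3,\eps,\beta}\to0$ in $H^{-1}$, $I_{4,\eps,\beta}\to0$ in $\Dp$, and $I_{2,\eps,\beta}\le0$, this yields $\pt\eta(u)+\px q(u)\le0$ in $\Dp((0,T)\times\R)$ for every convex $(\eta,q)$ (the same subsequence serves for all of them, since the estimates above are uniform in $\eps,\beta$ for each fixed $\eta$). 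Approximating the \Kruzkov entropies $u\mapsto\abs{u-k}$, $k\in\R$, by smooth convex ones gives the full family of \Kruzkov inequalities; combined with the distributional formulation from Lemma~\ref{lm:259} and the initial datum $u_0$, the uniqueness theorem for \eqref{eq:BU} gives \eqref{eq:u-entro-sol-12}.

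I expect the main obstacle to be precisely the term $I_{4,\eps,\beta}$: one has to track how the a priori estimates of Lemmas~\ref{lm:38} and~\ref{lm:50} degenerate as $\eps,\beta\to0$ and verify that their product, weighted by $\beta^2$, is $o(1)$ exactly under \eqref{eq:beta-eps-4} and not merely under \eqref{eq:beta-eps-2} --- this is why a genuinely different argument (and the sharper relation between $\beta$ and $\eps$) is needed, whereas Lemma~\ref{lm:259} could only afford an $L^1$ bound on $I_{4,\eps,\beta}$. The rest is a rerun of Lemma~\ref{lm:259} together with the classical compensated-compactness and \Kruzkov-uniqueness arguments.
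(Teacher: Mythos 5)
Your proposal is correct and follows essentially the same route as the paper: the same decomposition $I_{1,\eps,\beta},\dots,I_{4,\eps,\beta}$ from \eqref{eq:12000}, the same treatment of $I_{1},I_{2},I_{3}$ as in Lemma~\ref{lm:259}, and the identical key estimate $\norm{I_{4,\eps,\beta}}_{L^1}\le C_0\norm{\eta''}_{L^\infty(\R)}\beta^{1/4}/\eps\to0$ under \eqref{eq:beta-eps-4}, obtained by combining Lemmas~\ref{lm:38} and~\ref{lm:50} via H\"older. The only difference is that you spell out the final limit passage (sign of $I_{2,\eps,\beta}$ for convex $\eta$, approximation of the \Kruzkov entropies, uniqueness), which the paper delegates to the cited reference.
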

\begin{proof}
Let us consider a compactly supported entropy–-entropy flux pair $(\eta,\,q)$. Multiplying \eqref{eq:Ro-eps-beta} by $\eta'(\ueb)$, we have
\begin{align*}
\pt\eta(\ueb) + \px q(\ueb) =&\eps \eta'(\ueb) \pxx\ueb +\beta^2\eta'(\ueb)\ptxxxx\ueb \\
=& I_{1,\,\eps,\,\beta}+I_{2,\,\eps,\,\beta}+ I_{3,\,\eps,\,\beta} + I_{4,\,\eps,\,\beta},
\end{align*}
where $I_{1,\,\eps,\,\beta},\,I_{2,\,\eps,\,\beta},\, I_{3,\,\eps,\,\beta},\, I_{4,\,\eps,\,\beta}$ are defined in \eqref{eq:12000}.

As in Lemma \ref{lm:259}, we obtain that $I_{1,\,\eps,\,\beta}\to 0$ in $H^{-1}((0,T)\times\R)$, $\{I_{2,\,\eps,\,\beta}\}_{\eps,\beta>0}$ is bounded in $L^1((0,T)\times\R)$, $I_{3,\,\eps,\,\beta}\to 0$ in $H^{-1}((0,T)\times\R)$.

Let us show that
\begin{equation*}
I_{4,\,\eps,\,\beta}\to0\quad \text{in $L^1((0,T) \times\R),\,T>0$.}
\end{equation*}
Thanks to \eqref{eq:beta-eps-4}, Lemmas \ref{lm:38}, \ref{lm:50} and the H\"older inequality,
\begin{align*}
&\norm{\beta^2\eta''(\ueb)\px\ueb\ptxxx\ueb}_{L^1((0,T)\times\R)}\\
&\qquad\le\beta^2\norm{\eta''}_{L^{\infty}(\R)}\int_{0}^{T}\!\!\!\int_{\R}\vert\px\ueb\ptxxx\ueb\vert dsdx\\
&\qquad=\norm{\eta''}_{L^{\infty}(\R)}\frac{\beta^2\eps}{\eps}\norm{\px\ueb}_{L^2((0,T)\times\R)}\norm{\ptxxx\ueb}_{L^2((0,T)\times\R)}\\
&\qquad=\norm{\eta''}_{L^{\infty}(\R)}\frac{\beta^{\frac{1}{4}}\beta^{\frac{7}{4}}\eps}{\eps}\norm{\px\ueb}_{L^2((0,T)\times\R)}
\norm{\ptxxx\ueb}_{L^2((0,T)\times\R)}\\
&\qquad\le C_{0}\norm{\eta''}_{L^{\infty}(\R)}\frac{\beta^{\frac{1}{4}}}{\eps}\to 0.
\end{align*}
Arguing as \cite[Theorem $2.1$]{Cd5}, we have \eqref{eq:u-entro-sol-12}.
\end{proof}
\begin{proof}[Proof of Theorem \ref{th:main-1}]
Theorem \ref{th:main-1} follows from Lemmas \ref{lm:259} and \ref{lm:452}.
\end{proof}

\section{The Rosenau equation: $u_0 \in L^2(\R)\cap L^4(\R)$.}\label{sec:D1}
In this section,  we consider \eqref{eq:RKV32}, and  we assume \eqref{eq:uo-l4} on the initial datum.

We consider the approximate problem \eqref{eq:Ro-eps-beta}, where $u_{\eps,\beta,0}$ is a $C^\infty$ approximation of $u_{0}$ such that
\begin{equation}
\begin{split}
\label{eq:u0eps-14}
&u_{\eps,\,\beta,\,0} \to u_{0} \quad  \textrm{in $L^{p}_{loc}(\R)$, $1\le p < 2$, as $\eps,\,\beta \to 0$,}\\
&\norm{u_{\eps,\beta, 0}}^4_{L^4(\R)}+\norm{u_{\eps,\beta, 0}}^2_{L^2(\R)}+(\beta^{\frac{1}{2}}+ \eps^2) \norm{\px u_{\eps,\beta,0}}^2_{L^2(\R)}\le C_0,\quad \eps,\beta >0,\\
&\left(\beta^2+ \beta\eps^2 \right)\norm{\pxx u_{\eps,\beta,0}}^2_{L^2(\R)} +\beta^{\frac{5}{2}}\norm{\pxxx u_{\eps,\beta,0}}^2_{L^2(\R)}\le C_0,\quad \eps,\beta >0,
\end{split}
\end{equation}
and $C_0$ is a constant independent on $\eps$ and $\beta$.

The main result of this section is the following theorem.
\begin{theorem}
\label{th:main-13}
Assume that \eqref{eq:uo-l4} and  \eqref{eq:u0eps-14} hold. Fix $T>0$,
if \eqref{eq:beta-eps-2} holds, there exist two sequences $\{\eps_{n}\}_{n\in\N}$, $\{\beta_{n}\}_{n\in\N}$, with $\eps_n, \beta_n \to 0$, and a limit function
\begin{equation*}
u\in L^{\infty}((0,T); L^2(\R)\cap L^4(\R)),
\end{equation*}
such that
\begin{itemize}
\item[$i)$] $u_{\eps_n, \beta_n}\to u$  strongly in $L^{p}_{loc}(\R^{+}\times\R)$, for each $1\le p <4$,
\item[$ii)$] $u$ is the unique entropy solution of \eqref{eq:BU}.
\end{itemize}
\end{theorem}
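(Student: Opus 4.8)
The plan is to rerun the proof of Theorem~\ref{th:main-1}, carrying the $L^{4}$ norm of $\ueb$ through all the a priori estimates; the extra integrability of the datum is exactly what makes the weaker relation \eqref{eq:beta-eps-2} sufficient, so that no analog of \eqref{eq:beta-eps-4} is needed here. First I would record the basic estimates: Lemma~\ref{lm:38} exactly as before, together with the uniform bound
\begin{equation*}
\norm{\ueb(t,\cdot)}_{L^{4}(\R)}\le C_{0},\qquad t>0,
\end{equation*}
which is, for the approximate problem \eqref{eq:Ro-eps-beta}, the analog of the conservation of $\norm{\cdot}_{L^{4}}$ for \eqref{eq:RKV32}; it is obtained by multiplying \eqref{eq:Ro-eps-beta} by $\ueb^{3}$, integrating over $\R$, integrating the dispersive term by parts and absorbing the lower order contributions by the Young inequality, \eqref{eq:u0eps-14} and Lemma~\ref{lm:38}.

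Next comes the $L^{\infty}$ estimate, the analog of Lemma~\ref{lm:50} (that is, Lemma~\ref{lm:562}); here the $L^{4}$ bound makes the proof shorter than that of Lemma~\ref{lm:50}, with no iteration required. From Lemma~\ref{lm:38} and the interpolation inequality $\norm{\px\ueb(t,\cdot)}_{L^{2}(\R)}^{2}\le\norm{\ueb(t,\cdot)}_{L^{2}(\R)}\norm{\pxx\ueb(t,\cdot)}_{L^{2}(\R)}$ one gets $\norm{\px\ueb(t,\cdot)}_{L^{2}(\R)}\le C_{0}\beta^{-1/2}$, and then, using the $L^{4}$ bound,
\begin{equation*}
\ueb^{4}(t,x)=4\int_{-\infty}^{x}\ueb^{3}\px\ueb\,dx\le 4\norm{\ueb}_{L^{\infty}((0,T)\times\R)}\norm{\ueb(t,\cdot)}_{L^{4}(\R)}^{2}\norm{\px\ueb(t,\cdot)}_{L^{2}(\R)}
\end{equation*}
yields $\norm{\ueb}_{L^{\infty}((0,T)\times\R)}^{3}\le C_{0}\norm{\px\ueb(t,\cdot)}_{L^{2}(\R)}$, hence
\begin{equation*}
\norm{\ueb}_{L^{\infty}((0,T)\times\R)}\le C_{0}\beta^{-\frac16},
\end{equation*}
an estimate with exponent $1/6<1/4$, strictly better than \eqref{eq:u-infty-3}. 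Multiplying \eqref{eq:Ro-eps-beta} by $-\beta^{\frac12}\pxx\ueb-\beta\eps\ptxx\ueb+\eps\pt\ueb$ exactly as in the proof of Lemma~\ref{lm:50} and feeding this improved bound into the resulting analog of \eqref{eq:v3256}, one recovers the companion bounded families of Lemma~\ref{lm:50}, with the powers of $\beta$ lowered accordingly; in particular $\norm{\ptxxx\ueb}_{L^{2}((0,T)\times\R)}\le C_{0}\beta^{-5/3}\eps^{-1/2}$ and $\norm{\px\ueb}_{L^{2}((0,T)\times\R)}\le C_{0}\eps^{-1/2}$.

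Finally I would carry out the compensated compactness step, now playing the roles of both Lemma~\ref{lm:259} and Lemma~\ref{lm:452} at once. Multiplying \eqref{eq:Ro-eps-beta} by $\eta'(\ueb)$ for a compactly supported entropy--entropy flux pair $(\eta,q)$ gives $\pt\eta(\ueb)+\px q(\ueb)=I_{1,\eps,\beta}+I_{2,\eps,\beta}+I_{3,\eps,\beta}+I_{4,\eps,\beta}$ with $I_{j,\eps,\beta}$ as in \eqref{eq:12000}. As in Lemma~\ref{lm:259}, $I_{1,\eps,\beta}\to0$ and $I_{3,\eps,\beta}\to0$ in $H^{-1}((0,T)\times\R)$ and $\{I_{2,\eps,\beta}\}_{\eps,\beta>0}$ is bounded in $L^{1}((0,T)\times\R)$; the new point is that, thanks to the exponent $1/6$ and to \eqref{eq:beta-eps-2},
\begin{equation*}
\norm{\beta^{2}\eta''(\ueb)\px\ueb\ptxxx\ueb}_{L^{1}((0,T)\times\R)}\le C_{0}\norm{\eta''}_{L^{\infty}(\R)}\frac{\beta^{1/3}}{\eps}\longrightarrow 0,
\end{equation*}
so that $I_{4,\eps,\beta}\to0$ in $L^{1}((0,T)\times\R)$ --- this is where \eqref{eq:beta-eps-2} alone suffices, in contrast with Lemma~\ref{lm:452}. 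By Lemma~\ref{lm:1} the entropy production lies in a compact subset of $H^{-1}_{loc}((0,T)\times\R)$, and the $L^{p}$ compensated compactness of \cite{SC}, applied with the uniform $L^{4}$ bound, yields sequences $\eps_{n},\beta_{n}\to0$ and a limit $u\in L^{\infty}((0,T);L^{2}(\R)\cap L^{4}(\R))$ with $u_{\eps_{n},\beta_{n}}\to u$ almost everywhere; since $\{\abs{u_{\eps_{n},\beta_{n}}}^{p}\}$ is uniformly integrable on bounded sets for every $p<4$, this upgrades to $u_{\eps_{n},\beta_{n}}\to u$ in $L^{p}_{loc}(\R^{+}\times\R)$, $1\le p<4$, which proves $i)$. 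Passing to the limit in the entropy inequalities --- for convex $\eta$ the term $I_{2,\eps,\beta}$ has the favorable sign while $I_{1,\eps,\beta},I_{3,\eps,\beta},I_{4,\eps,\beta}\to0$ --- and arguing as in \cite[Theorem~$2.1$]{Cd5}, $u$ satisfies all \Kruzkov entropy inequalities and is therefore the unique entropy solution of \eqref{eq:BU}, which proves $ii)$.

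The main obstacle is Lemma~\ref{lm:562}: one must secure an $L^{\infty}$ bound whose power of $\beta$ is strictly below $1/4$ --- the precise quantitative gain that makes $I_{4,\eps,\beta}$ vanish already under \eqref{eq:beta-eps-2} and not only under \eqref{eq:beta-eps-4} --- while still extracting from the multiplier identity the bounds on $\px\ueb,\pxx\ueb,\pxxx\ueb,\ptx\ueb,\ptxx\ueb,\ptxxx\ueb,\pt\ueb$ that enter $I_{3,\eps,\beta}\to0$; a secondary technical point is keeping the $L^{4}$ norm under control through the integration by parts in the dispersive term of the $L^{4}$ estimate.
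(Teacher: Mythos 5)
Your overall architecture (basic $L^2$ estimate, $L^\infty$ estimate, uniform $L^4$ bound, then compensated compactness with $I_{4,\eps,\beta}\to 0$ in $L^1$ already under \eqref{eq:beta-eps-2}) is the same as the paper's, and your treatment of $I_{1},I_{2},I_{3},I_{4}$ and of the upgrade from a.e.\ convergence to $L^p_{loc}$, $p<4$, is fine. The genuine gap is in your derivation of the uniform $L^4$ bound, which is the load-bearing step. Multiplying \eqref{eq:Ro-eps-beta} by $\ueb^{3}$ alone and integrating over $\R$ gives
\begin{equation*}
\frac{1}{4}\frac{d}{dt}\norm{\ueb(t,\cdot)}^4_{L^4(\R)}+3\eps\norm{\ueb(t,\cdot)\px\ueb(t,\cdot)}^2_{L^2(\R)}=3\beta^{2}\int_{\R}\ueb^{2}\,\px\ueb\,\ptxxx\ueb\,dx,
\end{equation*}
and the right-hand side is not a ``lower order contribution'': after Young it produces a term of the form $\tfrac{A\beta^{3}\eps}{2}\norm{\ptxxx\ueb(t,\cdot)}^2_{L^2(\R)}$ for which nothing on the left can compensate, and the only available a priori control of $\ptxxx\ueb$ at that stage (Lemma \ref{lm:50}, i.e.\ $\beta^{7/2}\eps\int_0^t\norm{\ptxxx\ueb}^2_{L^2}\,ds\le C_0$) is off by a factor $\beta^{-1/2}$, so the inequality does not close. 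This is precisely why the paper (Lemma \ref{lm:n2}) uses the \emph{coupled} multiplier $\ueb^{3}-A\beta\eps\ptxx\ueb-B\eps\pt\ueb$: the piece $-A\beta\eps\ptxx\ueb$ generates the dissipation $A\beta^{3}\eps\norm{\ptxxx\ueb(t,\cdot)}^2_{L^2(\R)}$ on the left-hand side with exactly the coefficient needed to absorb half of the Young split, at the price of a nontrivial choice of the constants $A,B,D$ (the cubic \eqref{eq:Def-g} and condition \eqref{eq:P16}). Your proposal skips this entirely.

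There is also an ordering problem. Absorbing the remaining half of the Young split requires bounding $\tfrac{9\beta}{2\eps A}\norm{\ueb}^2_{L^{\infty}((0,T)\times\R)}$ by $C_0 D\eps/A$, i.e.\ it already uses the $L^\infty$ estimate \eqref{eq:u-infty-3} together with $\beta\le D^{2}\eps^{4}$; this is why the paper proves Lemma \ref{lm:562} (via the simple multiplier $-\beta^{1/2}\pxx\ueb$, with no $L^4$ input) \emph{before} the $L^4$ energy estimate. Your plan runs in the opposite order: you want the $L^4$ bound first and then use it to improve the $L^\infty$ exponent from $1/4$ to $1/6$. The improvement itself is arithmetically consistent and would indeed yield $\norm{\ptxxx\ueb}_{L^2((0,T)\times\R)}\le C_0\beta^{-5/3}\eps^{-1/2}$ and hence $I_{4,\eps,\beta}=O(\beta^{1/3}/\eps)\to 0$, so it is an acceptable alternative to the paper's route (which instead reads $\norm{\beta^{3/2}\eps^{1/2}\ptxxx\ueb}_{L^2}\le C_0$ directly off Lemma \ref{lm:n2} and never needs an exponent better than $1/4$) --- but only once the $L^4$ bound is actually established, and for that you must reproduce the coupled-multiplier argument rather than the bare $\ueb^{3}$ multiplication.
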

Let us prove some a priori estimates on $\ueb$, denoting with $C_0$ the constants which depend only on the initial data.

\begin{lemma}\label{lm:562}
Fix $T>0$. Assume \eqref{eq:beta-eps-2} holds. There exists $C_0>0$, independent on $\eps,\,\beta$ such that \eqref{eq:u-infty-3} holds.
In particular, we have
\begin{equation}
\label{eq:Z45}
\begin{split}
\beta\norm{\px\ueb(t,\cdot)}^2_{L^2(\R)}&+ \beta^3\norm{\pxxx\ueb(t,\cdot)}^2_{L^2(\R)}\\ &+\frac{3\beta\eps}{2}\int_{0}^{t}\norm{\pxx\ueb(s,\cdot)}^2_{L^2(\R)}ds\le C_0,
\end{split}
\end{equation}
for every $0<t<T$. Moreover,
\begin{equation}
\label{eq:Z46}
\norm{\px\ueb}_{L^{\infty}((0,T)\times\R)}\le C_0\beta^{-\frac{3}{4}}.
\end{equation}
\end{lemma}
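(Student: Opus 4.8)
The plan is to get all three estimates from the $L^{2}$--bounds of Lemma \ref{lm:38} together with a single energy estimate, avoiding the quadratic inequality used in Lemma \ref{lm:50}. First I prove \eqref{eq:u-infty-3}. By Lemma \ref{lm:38}, for every $t\in(0,T)$ we have $\norm{\ueb(t,\cdot)}_{L^{2}(\R)}\le C_{0}$ and $\norm{\pxx\ueb(t,\cdot)}_{L^{2}(\R)}\le C_{0}\beta^{-1}$. Using the one dimensional inequalities $\norm{\px f}^{2}_{L^{2}(\R)}\le\norm{f}_{L^{2}(\R)}\norm{\pxx f}_{L^{2}(\R)}$ and $\norm{f}^{2}_{L^{\infty}(\R)}\le 2\norm{f}_{L^{2}(\R)}\norm{\px f}_{L^{2}(\R)}$, we deduce $\norm{\px\ueb(t,\cdot)}_{L^{2}(\R)}\le C_{0}\beta^{-1/2}$ and hence $\norm{\ueb(t,\cdot)}^{2}_{L^{\infty}(\R)}\le C_{0}\beta^{-1/2}$, which is \eqref{eq:u-infty-3}. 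This is precisely the point where the argument is lighter than in Lemma \ref{lm:50}: the $L^{\infty}$ bound is produced directly by interpolation, with no fixed point step.

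For \eqref{eq:Z45} I would multiply \eqref{eq:Ro-eps-beta} by $-\beta\pxx\ueb$ and integrate over $\R$; after integration by parts every term except the convective one is an exact $t$--derivative or has a sign, giving
\[
\frac{d}{dt}\!\left(\frac{\beta}{2}\norm{\px\ueb(t,\cdot)}^{2}_{L^{2}(\R)}+\frac{\beta^{3}}{2}\norm{\pxxx\ueb(t,\cdot)}^{2}_{L^{2}(\R)}\right)+\beta\eps\norm{\pxx\ueb(t,\cdot)}^{2}_{L^{2}(\R)}=2\beta\int_{\R}\ueb\px\ueb\pxx\ueb\,dx .
\]
I bound the right hand side by $2\beta\norm{\ueb(t,\cdot)}_{L^{\infty}(\R)}\norm{\px\ueb(t,\cdot)}_{L^{2}(\R)}\norm{\pxx\ueb(t,\cdot)}_{L^{2}(\R)}$ and then, using \eqref{eq:u-infty-3}, the scaling \eqref{eq:beta-eps-2} (so that $\beta^{1/2}\eps^{-1}\le C_{0}\eps$) and the Young inequality, by $\frac14\beta\eps\norm{\pxx\ueb(t,\cdot)}^{2}_{L^{2}(\R)}+C_{0}\eps\norm{\px\ueb(t,\cdot)}^{2}_{L^{2}(\R)}$. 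Integrating on $(0,t)$, estimating the initial data terms by \eqref{eq:u0eps-14} and the last term by Lemma \ref{lm:38} (which gives $\eps\int_{0}^{t}\norm{\px\ueb(s,\cdot)}^{2}_{L^{2}(\R)}\,ds\le C_{0}$), I obtain \eqref{eq:Z45}. Alternatively, and more in the spirit of this section, one may avoid the $L^{\infty}$ bound and estimate $\int_{\R}\ueb^{2}(\px\ueb)^{2}\,dx$ by $\norm{\ueb(t,\cdot)}^{2}_{L^{4}(\R)}\norm{\px\ueb(t,\cdot)}^{2}_{L^{4}(\R)}$, controlling $\norm{\px\ueb(t,\cdot)}^{2}_{L^{4}(\R)}$ by Gagliardo--Nirenberg and using the conservation of $\norm{\cdot}_{L^{4}}$ for \eqref{eq:RKV32}.

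Finally, \eqref{eq:Z46} follows from \eqref{eq:Z45}: since $\norm{\px\ueb(t,\cdot)}_{L^{2}(\R)}\le C_{0}\beta^{-1/2}$ and $\norm{\pxxx\ueb(t,\cdot)}_{L^{2}(\R)}\le C_{0}\beta^{-3/2}$, interpolation gives $\norm{\pxx\ueb(t,\cdot)}^{2}_{L^{2}(\R)}\le\norm{\px\ueb(t,\cdot)}_{L^{2}(\R)}\norm{\pxxx\ueb(t,\cdot)}_{L^{2}(\R)}\le C_{0}\beta^{-2}$, and then $\norm{\px\ueb(t,\cdot)}^{2}_{L^{\infty}(\R)}\le 2\norm{\px\ueb(t,\cdot)}_{L^{2}(\R)}\norm{\pxx\ueb(t,\cdot)}_{L^{2}(\R)}\le C_{0}\beta^{-3/2}$. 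The delicate step is the treatment of the convective term $2\beta\int_{\R}\ueb\px\ueb\pxx\ueb\,dx$ in the energy identity: it has to be absorbed into the diffusion $\beta\eps\norm{\pxx\ueb}^{2}_{L^{2}(\R)}$ with a remainder that is integrable in time uniformly in $\eps$ and $\beta$, which is exactly what the hypothesis $\beta=\mathcal{O}(\eps^{4})$ guarantees by turning the negative power of $\eps$ coming from Young's inequality into a positive one, together with \eqref{eq:u-infty-3} (or the $L^{4}$ bound) and the diffusion estimate of Lemma \ref{lm:38}.
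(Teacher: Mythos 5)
Your proof is correct, and it reaches the key estimate \eqref{eq:u-infty-3} by a genuinely different (and lighter) route than the paper. The paper multiplies \eqref{eq:Ro-eps-beta} by $-\beta^{\frac{1}{2}}\pxx\ueb$ first, obtains the energy inequality \eqref{eq:K26} whose right-hand side still contains $\norm{\ueb}^2_{L^{\infty}((0,T)\times\R)}$, and then closes the loop with the Agmon inequality and the quartic self-consistent inequality $y^4\le C_0\delta^{-1}(1+y^2)$, exactly as in Lemma \ref{lm:50}. You instead observe that Lemma \ref{lm:38} already gives the pointwise-in-time bounds $\norm{\ueb(t,\cdot)}_{L^2(\R)}\le C_0$ and $\norm{\pxx\ueb(t,\cdot)}_{L^2(\R)}\le C_0\beta^{-1}$, so that the interpolation $\norm{\px f}^2_{L^2}\le\norm{f}_{L^2}\norm{\pxx f}_{L^2}$ followed by $\norm{f}^2_{L^\infty}\le 2\norm{f}_{L^2}\norm{\px f}_{L^2}$ yields \eqref{eq:u-infty-3} directly, with no bootstrap and in fact without using $\beta=\mathcal{O}(\eps^4)$ at all; this decouples the $L^\infty$ bound from the energy estimate and removes any appearance of circularity when you then feed \eqref{eq:u-infty-3} into the convective term. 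From there your treatment of \eqref{eq:Z45} coincides with the paper's up to the harmless rescaling of the multiplier ($-\beta\pxx\ueb$ versus $-\beta^{\frac{1}{2}}\pxx\ueb$): the same Young absorption, with $\beta^{\frac12}\eps^{-1}\le C_0\eps$ turning the remainder into $C_0\eps\norm{\px\ueb(t,\cdot)}^2_{L^2(\R)}$, which is integrable in time by \eqref{eq:l-2-u1}. For \eqref{eq:Z46} the paper takes $\norm{\pxx\ueb(t,\cdot)}_{L^2(\R)}\le C_0\beta^{-1}$ straight from \eqref{eq:l-2-u1}, while you recover the same bound by interpolating between the $\px$ and $\pxxx$ bounds of \eqref{eq:Z45}; both give $\norm{\px\ueb}^2_{L^\infty}\le C_0\beta^{-\frac32}$. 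The only cost of your shortcut is that it leans on the $H^2$-type information in Lemma \ref{lm:38} being available pointwise in time, which it is; what it buys is the elimination of the fixed-point step that the paper explicitly identifies as the technical part of Lemma \ref{lm:50}.
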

\begin{remark}
Observe that the proof of Lemma \ref{lm:562} is simpler than the one of Lemma \ref{lm:50}. Indeed, we only need to prove \eqref{eq:u-infty-3}.
\end{remark}
\begin{proof}[Proof of Lemma \ref{lm:562}.]
Let $0<t<T$. Multiplying \eqref{eq:Ro-eps-beta} by $-\beta^{\frac{1}{2}}\pxx\ueb$, we have
\begin{equation}
\label{eq:K23}
-\beta^{\frac{1}{2}}\pxx\ueb\pt\ueb - 2\beta^{\frac{1}{2}}\ueb\px\ueb\pxx\ueb -\beta^{\frac{5}{2}}\ptxxxx\ueb\pxx\ueb=  -\beta^{\frac{1}{2}}\eps(\pxx\ueb)^2.
\end{equation}
Since
\begin{align*}
-\beta^{\frac{1}{2}}\int_{\R}\pxx\ueb\pt\ueb dx = \frac{\beta^{\frac{1}{2}}}{2}\frac{d}{dt}\norm{\px\ueb(t,\cdot)}^2_{L^2(\R)},\\
-\beta^{\frac{5}{2}}\int_{\R}\ptxxxx\ueb\pxx\ueb dx =\frac{\beta^{\frac{5}{2}}}{2}\frac{d}{dt}\norm{\pxxx\ueb(t,\cdot)}^2_{L^2(\R)},
\end{align*}
integrating \eqref{eq:K23} on $\R$, we get
\begin{equation}
\label{eq:K24}
\begin{split}
&\frac{d}{dt}\left(\beta^{\frac{1}{2}} \norm{\px\ueb(t,\cdot)}^2_{L^2(\R)} + \beta^{\frac{5}{2}}\norm{\pxxx\ueb(t,\cdot)}^2_{L^2(\R)}\right)\\
&\qquad\quad +2\beta^{\frac{1}{2}}\eps\norm{\pxx\ueb(t,\cdot)}^2_{L^2(\R)}= 4\beta^{\frac{1}{2}}\int_{\R}\ueb\px\ueb\pxx\ueb dx.
\end{split}
\end{equation}
It follows from \eqref{eq:young24} and \eqref{eq:K24} that
\begin{align*}
&\frac{d}{dt}\left(\beta^{\frac{1}{2}} \norm{\px\ueb(t,\cdot)}^2_{L^2(\R)} + \beta^{\frac{5}{2}}\norm{\pxxx\ueb(t,\cdot)}^2_{L^2(\R)}\right)\\
&\qquad\quad +\frac{3}{2}\beta^{\frac{1}{2}}\eps\norm{\pxx\ueb(t,\cdot)}^2_{L^2(\R)}\le C_0\eps\norm{\ueb}^2_{L^{\infty}((0,T)\times\R)}\norm{\px\ueb(t,\cdot)}^2_{L^2(\R)}.
\end{align*}
Integrating on $(0,t)$, from \eqref{eq:l-2-u1} and \eqref{eq:u0eps-14}, we get
\begin{equation}
\label{eq:K26}
\begin{split}
\beta^{\frac{1}{2}} \norm{\px\ueb(t,\cdot)}^2_{L^2(\R)} &+ \beta^{\frac{5}{2}}\norm{\pxxx\ueb(t,\cdot)}^2_{L^2(\R)}\\
&+\frac{3}{2}\beta^{\frac{1}{2}}\eps\int_{\R}\norm{\pxx\ueb(s,\cdot)}^2_{L^2(\R)}ds\\
\le& C_{0} + C_{0}\eps\norm{\ueb}^2_{L^{\infty}((0,T)\times\R)}\int_{0}^{t}\norm{\px\ueb(s,\cdot)}^2_{L^2(\R)}ds\\
\le& C_{0}\left(1+\norm{\ueb}^2_{L^{\infty}((0,T)\times\R)}\right).
\end{split}
\end{equation}
We prove \eqref{eq:u-infty-3}. Due to \eqref{eq:l-2-u1}, \eqref{eq:K26} and the H\"older inequality,
\begin{align*}
\ueb^2(t,x) = &2 \int_{-\infty}^{x}\ueb\px\ueb dx \le \int_{\R} \vert\ueb\px\ueb\vert dx \\
&\le \norm{\ueb(t,\cdot)}^2_{\R}\norm{\px\ueb(t,\cdot)}^2_{L(\R)}\\
&\le \frac{C_0}{\beta^{\frac{1}{4}}}\sqrt{\left(1+\norm{\ueb}^2_{L^{\infty}((0,T)\times\R)}\right)},
\end{align*}
that is
\begin{equation*}
\norm{\ueb}^4_{L^{\infty}((0,T)\times\R)}\le \frac{C_0}{\beta^{\frac{1}{2}}}\left(1+\norm{\ueb}^2_{L^{\infty}((0,T)\times\R)}\right).
\end{equation*}
Arguing as Lemma \ref{lm:50}, we have \eqref{eq:u-infty-3}

\eqref{eq:Z45} follows from \eqref{eq:u-infty-3} and \eqref{eq:K26}.

Finally, we prove \eqref{eq:Z46}. Due to \eqref{eq:l-2-u1}, \eqref{eq:Z45} and the H\"older inequality,
\begin{align*}
(\px\ueb(t,x))^2 =&2\int_{-\infty}^{x}\px\ueb\pxx\ueb dy \le 2\int_{\R} \px\ueb\pxx\ueb dx \\
\le &\norm{\px\ueb(t,\cdot)}^2_{L^2(\R)}\norm{\pxx\ueb(t,\cdot)}^2_{L^2(\R)}\le C_{0}\beta^{-\frac{3}{2}}.
\end{align*}
Hence,
\begin{equation*}
\vert\px\ueb\vert \le C_0\beta^{-\frac{3}{4}},
\end{equation*}
which gives \eqref{eq:Z46}.
\end{proof}
Following \cite[Lemma $2.2$]{Cd}, or \cite[Lemma $4.2$]{CK}, we prove the following result.
\begin{lemma}\label{lm:n2}
Fix $T>0$. Assume \eqref{eq:beta-eps-2} holds. Then:
\begin{itemize}
\item[$i)$] the family  $\{\ueb\}_{\eps,\,\beta}$ is bounded in $L^{\infty}((0,T);L^{4}(\R))$;
\item[$ii)$] the families $\{\eps\px\ueb\}_{\eps,\,\beta},\,\{\beta^{\frac{1}{2}}\eps^{\frac{1}{2}}\pxx\ueb\}_{\eps,\,\beta}$ are bounded in $L^{\infty}((0,T);L^{2}(\R))$;
\item[$iii)$] the families $\{\beta^{\frac{1}{2}}\eps^{\frac{1}{2}}\ptx\ueb\}_{\eps,\,\beta},\,\{\eps^{\frac{1}{2}}\pt\ueb\}_{\eps,\,\beta},\, \{\beta^{\frac{3}{2}}\eps^{\frac{1}{2}}\ptxxx\ueb\}_{\eps,\,\beta},$\\
$\{\beta\eps^{\frac{1}{2}}\ptxx\ueb\}_{\eps,\,\beta},\,\{\eps^{\frac{1}{2}}\ueb\px\ueb\}_{\eps,\,\beta}$ are bounded in $L^{2}((0,T)\times\R)$.
\end{itemize}
\end{lemma}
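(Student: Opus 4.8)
The plan is to run a single weighted energy estimate, in the spirit of the proof of Lemma~\ref{lm:50} but with a cubic weight in place of the dispersive one. Fix $0<t<T$ and multiply \eqref{eq:Ro-eps-beta} by a combination of the form
\[
\ueb^{3}+\eps\pt\ueb-\beta\eps\ptxx\ueb ,
\]
the three summands encoding, respectively, the $L^{4}$-bound, the $\pt$-bound (together with $\eps\px\ueb$ in $L^{\infty}L^{2}$), and the $\ptx$- and $\ptxxx$-bounds (together with $\pxx\ueb$ in $L^{\infty}L^{2}$); the precise weights can be tuned. Integrating over $\R$ and integrating by parts in $x$, the pieces coming from $\pt\ueb$ and from the viscous term $\eps\pxx\ueb$ assemble into the time derivative of a weighted sum of $\|\ueb(t,\cdot)\|_{L^{4}(\R)}^{4}$, $\eps^{2}\|\px\ueb(t,\cdot)\|_{L^{2}(\R)}^{2}$ and $\beta\eps^{2}\|\pxx\ueb(t,\cdot)\|_{L^{2}(\R)}^{2}$, while the remaining quadratic contributions, namely $\eps\|\pt\ueb\|_{L^{2}(\R)}^{2}$, $\beta\eps\|\ptx\ueb\|_{L^{2}(\R)}^{2}$, the dispersive pieces $\beta^{2}\eps\|\ptxx\ueb\|_{L^{2}(\R)}^{2}$ and $\beta^{3}\eps\|\ptxxx\ueb\|_{L^{2}(\R)}^{2}$ (both with the favourable sign after two integrations by parts), and $3\eps\|\ueb\px\ueb\|_{L^{2}(\R)}^{2}$ coming from $\eps\int_{\R}\ueb^{3}\pxx\ueb\,dx=-3\eps\int_{\R}\ueb^{2}(\px\ueb)^{2}\,dx$, are non-negative and are kept on the left-hand side.

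The only terms remaining on the right-hand side are the cross terms $-2\eps\int_{\R}\ueb\px\ueb\,\pt\ueb\,dx$, $2\beta\eps\int_{\R}\ueb\px\ueb\,\ptxx\ueb\,dx$ and $-\beta^{2}\int_{\R}\ueb^{3}\ptxxxx\ueb\,dx$. The first two are harmless: the Young inequality, used exactly as in \eqref{eq:young24}, dominates them by a controlled multiple of $\eps\|\pt\ueb\|^{2}$, $\beta^{2}\eps\|\ptxx\ueb\|^{2}$ and $\eps\|\ueb\px\ueb\|^{2}$, which are then absorbed on the left (enlarging the weight of $\ueb^{3}$ by a fixed factor if needed). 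The genuinely new term is the dispersive cross term: integrating by parts once it becomes $3\beta^{2}\int_{\R}\ueb^{2}\px\ueb\,\ptxxx\ueb\,dx$, and I would estimate it by a weighted Young inequality using, and this is the crucial point, the pointwise bounds
\[
\|\ueb\|_{L^{\infty}((0,T)\times\R)}\le C_{0}\beta^{-1/4},\qquad \|\px\ueb\|_{L^{\infty}((0,T)\times\R)}\le C_{0}\beta^{-3/4}
\]
of Lemma~\ref{lm:562} (that is, \eqref{eq:u-infty-3} and \eqref{eq:Z46}) together with \eqref{eq:l-2-u1} and \eqref{eq:Z45}, splitting it into a fraction of $\beta^{3}\eps\|\ptxxx\ueb\|^{2}$ plus a fraction of $\eps\|\ueb\px\ueb\|^{2}$. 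The hypothesis $\beta=\mathcal O(\eps^{4})$ enters here and only here: it is precisely what makes the residual prefactor (essentially $\beta^{1/2}\eps^{-2}$, up to constants depending on $C_{0}$) bounded, so that the term can indeed be absorbed into the dissipation.

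Integrating the resulting differential inequality on $(0,t)$ and using the bounds \eqref{eq:u0eps-14} on the approximate initial datum, Gronwall's inequality yields a bound, uniform in $\eps$ and $\beta$, for
\[
\|\ueb(t,\cdot)\|_{L^{4}}^{4}+\eps^{2}\|\px\ueb(t,\cdot)\|_{L^{2}}^{2}+\beta\eps^{2}\|\pxx\ueb(t,\cdot)\|_{L^{2}}^{2}+\int_{0}^{t}\!\Big(\eps\|\pt\ueb\|_{L^{2}}^{2}+\beta\eps\|\ptx\ueb\|_{L^{2}}^{2}+\beta^{2}\eps\|\ptxx\ueb\|_{L^{2}}^{2}+\beta^{3}\eps\|\ptxxx\ueb\|_{L^{2}}^{2}+\eps\|\ueb\px\ueb\|_{L^{2}}^{2}\Big)\,ds .
\]
The $L^{4}$-term is exactly $i)$; the two surviving time-derivative terms give $ii)$; and the five time-integrated terms give $iii)$.

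The main obstacle is precisely the dispersive cross term $-\beta^{2}\int_{\R}\ueb^{3}\ptxxxx\ueb\,dx$. Unlike its analogue in Lemma~\ref{lm:50}, where the test function already carried a spatial derivative, here one integrates by parts against a cubic quantity and must then reabsorb the outcome into dissipation terms carrying the comparatively weak weights $\beta^{3}\eps$ and $\eps$; this is exactly what forces the use of the sharpened $L^{\infty}$-estimates available in the $L^{4}$-setting (Lemma~\ref{lm:562}) rather than the $L^{2}$-type estimates of Lemma~\ref{lm:38}, and the bookkeeping of the powers of $\beta$ and $\eps$, controlled by $\beta=\mathcal O(\eps^{4})$, is the delicate part of the argument.
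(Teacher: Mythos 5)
Your proposal is correct and follows essentially the same route as the paper: there one multiplies \eqref{eq:Ro-eps-beta} by $\ueb^3 - A\beta\eps\ptxx\ueb + B\eps\pt\ueb$ and absorbs the dispersive cross term $3\beta^2\int_{\R}\ueb^2\px\ueb\,\ptxxx\ueb\,dx$ by Young's inequality together with $\norm{\ueb}_{L^{\infty}}\le C_0\beta^{-1/4}$, with $\beta=\mathcal{O}(\eps^4)$ entering exactly where you say it does. The only cosmetic differences are that the paper makes the tuning of the weights $A,B$ explicit (reducing it to a cubic inequality in $A$) and needs neither \eqref{eq:Z46} nor Gronwall, since once every term is absorbed the differential inequality integrates directly in time.
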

\begin{proof}
Let $0<t<T$. Let $A,\,B$ be some positive constants which will be specified later. Multiplying \eqref{eq:Ro-eps-beta} by
\begin{equation*}
\ueb^3 -A\beta\eps\ptxx\ueb -B\eps\pt\ueb,
\end{equation*}
we have
\begin{equation}
\label{eq:P1}
\begin{split}
&\left(\ueb^3 -A\beta\eps\ptxx\ueb +B\eps\pt\ueb\right)\pt\ueb\\
&\qquad\quad+2\left(\ueb^3 -A\beta\eps\ptxx\ueb +B\eps\pt\ueb\right)\ueb\px\ueb\\
&\qquad\quad +\beta^2 \left(\ueb^3 -A\beta\eps\ptxx\ueb +B\eps\pt\ueb\right)\ptxxxx\ueb\\
&\qquad= \eps\left(\ueb^3 -A\beta\eps\ptxx\ueb +B\eps\pt\ueb\right)\pxx\ueb.
\end{split}
\end{equation}
We observe that
\begin{align*}
%\label{eq:P2}
\int_{\R}&\left(\ueb^3 -A\beta\eps\ptxx\ueb +B\eps\pt\ueb\right)\pt\ueb dx\\
=&\frac{1}{4}\norm{\ueb(t,\cdot)}^4_{L^4(\R)} + A\beta\eps\norm{\ptx\ueb(t,\cdot)}^2_{L^2(\R)} +B\eps\norm{\pt\ueb(t,\cdot)}^2_{L^2(\R)},\\
%\label{eq:P3}
2\int_{\R}&\left(\ueb^3 -A\beta\eps\ptxx\ueb +B\eps\pt\ueb\right)\ueb\px\ueb dx \\
=&  -2A\beta\eps\int_{\R}\ueb\px\ueb\ptxx\ueb dx +2B\eps\int_{\R}\ueb\px\ueb\pt\ueb dx,\\
%\label{eq:P4}
\beta^2\int_{\R}&\left(\ueb^3 -A\beta\eps\ptxx\ueb +B\eps\pt\ueb\right)\ptxxxx\ueb dx\\
=& -3\beta^2\int_{\R} \ueb^2 \px\ueb \ptxxx\ueb dx +A\beta^3\eps\norm{\ptxxx\ueb(t,\cdot)}^2_{L^2(\R)}\\
& +B\beta^2\eps\norm{\ptxx\ueb(t,\cdot)}^2_{L^2(\R)},\\
%\label{eq:P5}
\eps\int_{\R}&\left(\ueb^3 -A\beta\eps\ptxx\ueb +B\eps\pt\ueb\right)\pxx\ueb dx \\
=& -3\eps\norm{\ueb(t,\cdot)\px\ueb(t,\cdot)}^2_{L^2(\R)} -\frac{A\beta\eps^2}{2}\frac{d}{dt}\norm{\pxx\ueb(t,\cdot)}^2_{L^2(\R)}\\
& -\frac{B\eps^2}{2}\frac{d}{dt}\norm{\px\ueb(t,\cdot)}^2_{L^2(\R)}.
\end{align*}
An integration of \eqref{eq:P1} on $\R$ gives
\begin{equation}
\label{eq:P6}
\begin{split}
&\frac{d}{dt}\left(\frac{1}{4}\norm{\ueb(t,\cdot)}^4_{L^4(\R)}+ \frac{B\eps^2}{2}\norm{\px\ueb(t,\cdot)}^2_{L^2(\R)} +  \frac{A\beta\eps}{2}\norm{\pxx\ueb(t,\cdot)}^2_{L^2(\R)}\right)\\
&\qquad\quad + A\beta\eps\norm{\ptx\ueb(t,\cdot)}^2_{L^2(\R)}+B\eps\norm{\pt\ueb(t,\cdot)}^2_{L^2(\R)}\\
&\qquad\quad +A\beta^3\eps\norm{\ptxxx\ueb(t,\cdot)}^2_{L^2(\R)} +B\beta^2\eps\norm{\ptxx\ueb(t,\cdot)}^2_{L^2(\R)}\\
&\qquad\quad +3\eps\norm{\ueb(t,\cdot)\px\ueb(t,\cdot)}^2_{L^2(\R)}\\
&\qquad= 2A\beta\eps\int_{\R}\ueb\px\ueb\ptxx\ueb dx -2B\eps\int_{\R}\ueb\px\ueb\pt\ueb dx\\
&\qquad\quad +3\beta^2\int_{\R} \ueb^2 \px\ueb \ptxxx\ueb dx.
\end{split}
\end{equation}
Due to the Young inequality,
\begin{align*}
%\label{eq:P7}
2A\beta\eps\int_{\R}&\vert\ueb\px\ueb\vert\vert\ptxx\ueb\vert dx = \eps\int_{\R}\left\vert\frac{4A\ueb\px\ueb}{\sqrt{B}}\right\vert\left\vert\sqrt{B}\beta
\ptxx\ueb\right\vert dx\\
\le& \frac{8A^2\eps}{B}\norm{\ueb(t,\cdot)\px\ueb(t,\cdot)}^2_{L^2(\R)} +\frac{B\beta^2\eps}{2}\norm{\ptxx\ueb(t,\cdot)}^2_{L^2(\R)},\\
2B\eps\int_{\R}&\vert\ueb\px\ueb\vert\vert\pt\ueb\vert dx= \eps\int_{\R}\left \vert \ueb\px\ueb\right\vert \left \vert 2B \pt\ueb\right\vert dx \\
\le& \frac{\eps}{2}\norm{\ueb(t,\cdot)\px\ueb(t,\cdot)}^2_{L^2(\R)} + 2B^2\norm{\pt\ueb(t,\cdot)}^2_{L^2(\R)}.
\end{align*}
Therefore, from \eqref{eq:P6} we gain
\begin{equation}
\label{eq:P8}
\begin{split}
&\frac{d}{dt}\left(\frac{1}{4}\norm{\ueb(t,\cdot)}^4_{L^4(\R)}+ \frac{B\eps}{2}\norm{\px\ueb(t,\cdot)}^2_{L^2(\R)} +  \frac{A\beta\eps}{2}\norm{\pxx\ueb(t,\cdot)}^2_{L^2(\R)}\right)\\
&\qquad\quad + A\beta\eps\norm{\ptx\ueb(t,\cdot)}^2_{L^2(\R)}+\eps B\left (1 - 2B\right)\norm{\pt\ueb(t,\cdot)}^2_{L^2(\R)}\\
&\qquad\quad +A\beta^3\eps\norm{\ptxxx\ueb(t,\cdot)}^2_{L^2(\R)} +\frac{B\beta^2\eps}{2}\norm{\ptxx\ueb(t,\cdot)}^2_{L^2(\R)}\\
&\qquad\quad +\eps\left(\frac{5}{2} -\frac{8A^2}{B}\right)\norm{\ueb(t,\cdot)\px\ueb(t,\cdot)}^2_{L^2(\R)}\\
&\qquad\le 3\beta^2\int_{\R} \ueb^2 \vert\px\ueb\vert\vert \ptxxx\ueb \vert dx.
\end{split}
\end{equation}
From \eqref{eq:beta-eps-2}, we have
\begin{equation}
\label{eq:P9}
\beta \le D^2 \eps^4,
\end{equation}
where $D$ is a positive constant which will be specified later. Due to \eqref{eq:u-infty-3}, \eqref{eq:P9} and the Young inequality,
\begin{align*}
%\label{eq:P10}
&3\beta^2\int_{\R} \ueb^2 \vert\px\ueb\vert\vert \ptxxx\ueb \vert dx = \int_{\R}\left \vert  \frac{3\beta^{\frac{1}{2}} \ueb^2\px\ueb}{\eps^{\frac{1}{2}}\sqrt{A}}\right\vert \left\vert \beta^{\frac{3}{2}}\eps^{\frac{1}{2}}\sqrt{A}\ptxxx\ueb\right\vert dx\\
&\qquad \le \frac{9\beta}{2\eps A}\int_{\R}\ueb^4(\px\ueb)^2 dx + \frac{A\beta^3\eps }{2}\norm{\ptxxx\ueb(t,\cdot)}^2_{L^2(\R)}\\
&\qquad \le \frac{9\beta}{2\eps A}\norm{\ueb}^2_{L^{\infty}((0,T)\times\R)}\norm{\ueb(t,\cdot)\px\ueb(t,\cdot)}^2_{L^2(\R)}\\
&\qquad\quad+  \frac{A\beta^3\eps }{2}\norm{\ptxxx\ueb(t,\cdot)}^2_{L^2(\R)}\\
&\qquad\le \frac{C_{0}D\eps}{A}\norm{\ueb(t,\cdot)\px\ueb(t,\cdot)}^2_{L^2(\R)} + \frac{A\beta^3\eps }{2}\norm{\ptxxx\ueb(t,\cdot)}^2_{L^2(\R)}.
\end{align*}
Then, \eqref{eq:P8} gives
\begin{equation}
\label{eq:P11}
\begin{split}
&\frac{d}{dt}\left(\frac{1}{4}\norm{\ueb(t,\cdot)}^4_{L^4(\R)}+ \frac{B\eps^2}{2}\norm{\px\ueb(t,\cdot)}^2_{L^2(\R)} +  \frac{A\beta\eps}{2}\norm{\pxx\ueb(t,\cdot)}^2_{L^2(\R)}\right)\\
&\qquad\quad + A\beta\eps\norm{\ptx\ueb(t,\cdot)}^2_{L^2(\R)}+\eps B\left (1 - 2B\right)\norm{\pt\ueb(t,\cdot)}^2_{L^2(\R)}\\
&\qquad\quad +\frac{A\beta^3\eps}{2}\norm{\ptxxx\ueb(t,\cdot)}^2_{L^2(\R)} +\frac{B\beta^2\eps}{2}\norm{\ptxx\ueb(t,\cdot)}^2_{L^2(\R)}\\
&\qquad\quad +\eps\left(\frac{5}{2} -\frac{8A^2}{B}- \frac{C_{0}D}{A}\right)\norm{\ueb(t,\cdot)\px\ueb(t,\cdot)}^2_{L^2(\R)}\le 0.
\end{split}
\end{equation}
We search $A,\,B$ such that
\begin{equation*}
\begin{cases}
\displaystyle 1-2B >0,\\
\displaystyle \frac{5}{2} -\frac{8A^2}{B}- \frac{C_{0}D}{A} >0,\\
\end{cases}
\end{equation*}
that is
\begin{equation}
\label{eq:p302}
\begin{cases}
\displaystyle B<\frac{1}{2},\\
\displaystyle 16A^3 -5BA +2C_0 BD <0.
\end{cases}
\end{equation}
We choose
\begin{equation}
\label{eq:scet-B}
B=\frac{1}{3}.
\end{equation}
Therefore, the second equation of \eqref{eq:p302} reads
\begin{equation*}
16A^3 -\frac{5}{3}A +\frac{2}{3}C_0D <0,
\end{equation*}
that is
\begin{equation}
\label{eq:P12}
48A^3 - 5A +2C_{0}D <0.
\end{equation}
Let us consider the following function
\begin{equation}
\label{eq:Def-g}
g(X) = 48 X^3 - 5X +2C_{0}D
\end{equation}
We observe that
\begin{equation}
\label{eq:P13}
\lim_{x\to -\infty} g(X) =-\infty, \quad g(0)= 2C_0D>0, \quad \lim_{x\to\infty}g(X)=\infty.
\end{equation}
Since $g'(X)= 144 X^2 -5 $, we find that
\begin{equation}
\label{eq:zu1}
g\quad \textrm{is increasing in}\> \left(-\infty, -\frac{\sqrt{5}}{12}\right)\>\text{and in}\>\left(\frac{\sqrt{5}}{12}, \infty\right).
\end{equation}
Therefore,
\begin{equation}
\label{eq:P14}
g\left(\frac{\sqrt{5}}{12}\right)= 16\left(\frac{\sqrt{5}}{12}\right)^3  - \frac{5\sqrt{5}}{12} + 2C_{0}D.
\end{equation}
Since we want that
\begin{equation}
\label{eq:P15}
g\left(\frac{\sqrt{5}}{12}\right)<0,
\end{equation}
we choose
\begin{equation}
\label{eq:P16}
D< \frac{5\sqrt{5}}{27C_{0}}.
\end{equation}
It follows from \eqref{eq:P13}, \eqref{eq:zu1}, \eqref{eq:P14}, \eqref{eq:P15}, and \eqref{eq:P16} that the function $g$ has three zeros $A_1< 0<A_2<A_3$.

Therefore, \eqref{eq:P12} is verified when
\begin{equation}
\label{eq:sce-A}
A_2<A<A_3.
\end{equation}
From \eqref{eq:P11}, \eqref{eq:scet-B}, and \eqref{eq:sce-A}, we have
\begin{align*}
&\frac{d}{dt}\left(\frac{1}{4}\norm{\ueb(t,\cdot)}^4_{L^4(\R)}+ \frac{\eps^2}{6}\norm{\px\ueb(t,\cdot)}^2_{L^2(\R)} +  \frac{A\beta\eps}{2}\norm{\pxx\ueb(t,\cdot)}^2_{L^2(\R)}\right)\\
&\qquad\quad + A\beta\eps\norm{\ptx\ueb(t,\cdot)}^2_{L^2(\R)}+\frac{\eps}{9}\norm{\pt\ueb(t,\cdot)}^2_{L^2(\R)}\\
&\qquad\quad +\frac{A\beta^3\eps}{2}\norm{\ptxxx\ueb(t,\cdot)}^2_{L^2(\R)} +\frac{\beta^2\eps}{6}\norm{\ptxx\ueb(t,\cdot)}^2_{L^2(\R)}\\
&\qquad\quad +K_1\eps\norm{\ueb(t,\cdot)\px\ueb(t,\cdot)}^2_{L^2(\R)}\le 0,
\end{align*}
where $K_1$ is a positive constant.

An integration on $(0,t)$ and \eqref{eq:u0eps-14} give
\begin{align*}
&\frac{1}{4}\norm{\ueb(t,\cdot)}^4_{L^4(\R)}+ \frac{\eps^2}{6}\norm{\px\ueb(t,\cdot)}^2_{L^2(\R)} +  \frac{A\beta\eps}{2}\norm{\pxx\ueb(t,\cdot)}^2_{L^2(\R)}\\
&\qquad\quad + A\beta\eps\int_{0}^{t}\norm{\ptx\ueb(s,\cdot)}^2_{L^2(\R)}ds+\frac{\eps}{9}\int_{0}^{t}\norm{\pt\ueb(s,\cdot)}^2_{L^2(\R)}ds\\
&\qquad\quad +\frac{A\beta^3\eps}{2}\int_{0}^{t}\norm{\ptxxx\ueb(s,\cdot)}^2_{L^2(\R)}ds +\frac{\beta^2\eps}{6}\int_{0}^{t}\norm{\ptxx\ueb(s,\cdot)}^2_{L^2(\R)}ds\\
&\qquad\quad +K_1\eps\int_{0}^{t}\norm{\ueb(s,\cdot)\px\ueb(t,\cdot)}^2_{L^2(\R)}ds\le C_0.
\end{align*}
Hence,
\begin{align*}
\norm{\ueb(t,\cdot)}_{L^4(\R)}\le & C_{0},\\
\eps\norm{\px\ueb(t,\cdot)}_{L^2(\R)}\le &C_{0},\\
\beta^{\frac{1}{2}}\eps^{\frac{1}{2}}\norm{\pxx\ueb(t,\cdot)}_{L^2(\R)}\le &C_{0},\\
\beta\eps\int_{0}^{t}\norm{\ptx\ueb(s,\cdot)}^2_{L^2(\R)}ds\le &C_{0},\\
\eps\int_{0}^{t}\norm{\pt\ueb(s,\cdot)}^2_{L^2(\R)}ds\le &C_{0},\\
\beta^3\eps\int_{0}^{t}\norm{\ptxxx\ueb(s,\cdot)}^2_{L^2(\R)}ds\le& C_{0},\\
\beta^2\eps\int_{0}^{t}\norm{\ptxx\ueb(s,\cdot)}^2_{L^2(\R)}ds\le& C_{0},\\
\eps\int_{0}^{t}\norm{\ueb(s,\cdot)\px\ueb(t,\cdot)}^2_{L^2(\R)}\le& C_{0},
\end{align*}
for every $0<t<T$.
\end{proof}
We are ready for the proof of Theorem \ref{th:main-13}.
\begin{proof}[Proof of Theorem \eqref{th:main-13}.]
Let us consider a compactly supported entropy--entropy flux pair $(\eta,\,q)$. Multiplying \eqref{eq:Ro-eps-beta} by $\eta'(\ueb)$, we have
\begin{align*}
\pt\eta(\ueb) + \px q(\ueb) =&\eps \eta'(\ueb) \pxx\ueb +\beta^2\eta'(\ueb)\ptxxxx\ueb \\
=& I_{1,\,\eps,\,\beta}+I_{2,\,\eps,\,\beta}+ I_{3,\,\eps,\,\beta} + I_{4,\,\eps,\,\beta},
\end{align*}
where $I_{1,\,\eps,\,\beta},\,I_{2,\,\eps,\,\beta},\, I_{3,\,\eps,\,\beta},\, I_{4,\,\eps,\,\beta}$ are defined in \eqref{eq:12000}.

Fix $T>0$. Arguing as \cite[Lemma $3.2$]{Cd2}, we have that $I_{1,\,\eps,\,\beta}\to0$ in $H^{-1}((0,T) \times\R)$, and $\{I_{2,\,\eps,\,\beta}\}_{\eps,\beta >0}$ is bounded in $L^1((0,T)\times\R)$.\\
We claim that
\begin{equation*}
I_{3,\,\eps,\,\beta}\to0 \quad \text{in $H^{-1}((0,T) \times\R),\,T>0,$ as $\eps\to 0$.}
\end{equation*}
By \eqref{eq:beta-eps-2} and Lemma \ref{lm:n2},
\begin{align*}
&\norm{ \beta^2\eta'(\ueb)\ptxxx\ueb}^2_{L^2((0,T)\times\R)}\\
&\qquad\le \beta^4 \norm{\eta'}_{L^{\infty}(\R)}\norm{\ptxxx\ueb}^2_{L^2((0,T)\times\R)}\\
&\qquad= \norm{\eta'}_{L^{\infty}(\R)}\frac{\beta^4\eps}{\eps}\norm{\ptxxx\ueb}^2_{L^2((0,T)\times\R)}\\
&\qquad=\norm{\eta'}_{L^{\infty}(\R)}\frac{\beta\beta^3\eps}{\eps}\norm{\ptxxx\ueb}^2_{L^2((0,T)\times\R)}\le C_{0}\norm{\eta'}_{L^{\infty}(\R)}\eps^3\to0.
\end{align*}
Let us show that
\begin{equation*}
I_{4,\,\eps,\,\beta}\to0\quad \text{in $L^1((0,T) \times\R),\,T>0$.}
\end{equation*}
Thanks to \eqref{eq:beta-eps-2}, Lemmas \ref{lm:38}, \ref{lm:n2} and the H\"older inequality,
\begin{align*}
&\norm{\beta^2\eta''(\ueb)\px\ueb\ptxxx\ueb}_{L^1((0,T)\times\R)}\\
&\qquad\le\beta^2\norm{\eta''}_{L^{\infty}(\R)}\int_{0}^{T}\!\!\!\int_{\R}\vert\px\ueb\ptxxx\ueb\vert dsdx\\
&\qquad=\norm{\eta''}_{L^{\infty}(\R)}\frac{\beta^2\eps}{\eps}\norm{\px\ueb}_{L^2((0,T)\times\R)}\norm{\ptxxx\ueb}_{L^2((0,T)\times\R)}\\
&\qquad=\norm{\eta''}_{L^{\infty}(\R)}\frac{\beta^{\frac{1}{2}}\beta^{\frac{3}{2}}\eps}{\eps}\norm{\px\ueb}_{L^2((0,T)\times\R)}
\norm{\ptxxx\ueb}_{L^2((0,T)\times\R)}\\
&\qquad\le C_{0}\norm{\eta''}_{L^{\infty}(\R)}\eps\to 0.
\end{align*}
Arguing as \cite[Theorem $2.1$]{Cd5}, the proof is concluded.
\end{proof}

\appendix
\section{The Korteweg-de Vries equation: the first case}\label{appen1}
In this appendix, we consider the Korteweg-de Vries equation
\begin{equation}
\label{eq:kdv1}
\pt u +u\px u +\beta \pxxx u =0.
\end{equation}
We augment \eqref{eq:kdv1} with the initial condition
\begin{equation}
u(0,x)=u_{0}(x),
\end{equation}
on which we assume that
\begin{equation}
\label{eq:A1}
u_{0}\in L^{2}(\R), \quad -\infty <\int_{\R}u^3(x)dx <\infty.
\end{equation}
Observe that if $\beta\to 0$, we have \eqref{eq:BU}.

We study the dispersion-diffusion limit for \eqref{eq:kdv1}. Therefore, we fix two small numbers $\eps,\,\beta$ and consider the following third order approximation
\begin{equation}
\label{eq:A2}
\begin{cases}
\pt\ueb+ \ueb\px \ueb +\beta\pxxx\ueb =\eps\pxx\ueb, &\qquad t>0, \ x\in\R ,\\
\ueb(0,x)=u_{\eps,\beta,0}(x), &\qquad x\in\R,
\end{cases}
\end{equation}
where $u_{\eps,\beta,0}$ is a $C^\infty$ approximation of $u_{0}$ such that
\begin{equation}
\begin{split}
\label{eq:A3}
&u_{\eps,\,\beta,\,0} \to u_{0} \quad  \textrm{in $L^{p}_{loc}(\R)$, $1\le p < 2$, as $\eps,\,\beta \to 0$,}\\
&\norm{u_{\eps,\beta, 0}}^2_{L^2(\R)}+\beta \norm{\px u_{\eps,\beta,0}}^2_{L^2(\R)}\le C_0,\quad \eps,\beta >0,  \\
&-\infty <\int_{\R}u^3_{\eps,\,\beta,\,0}(x) dx <\infty , \quad \eps,\beta >0,
\end{split}
\end{equation}
and $C_0$ is a constant independent on $\eps$ and $\beta$.

The main result of this section is the following theorem.
\begin{theorem}
\label{th:main-A2}
Assume that \eqref{eq:A1} and \eqref{eq:A3} hold. Fix $T>0$,
if
\begin{equation}
\label{eq:A8}
\beta=\mathbf{\mathcal{O}}\left(\eps^{3}\right),
\end{equation}
then, there exist two sequences $\{\eps_{n}\}_{n\in\N}$, $\{\beta_{n}\}_{n\in\N}$, with $\eps_n, \beta_n \to 0$, and a limit function
\begin{equation*}
u\in L^{\infty}((0,T); L^2(\R)),
\end{equation*}
such that
\begin{itemize}
\item[$i)$] $u_{\eps_n, \beta_n}\to u$  strongly in $L^{p}_{loc}(\R^{+}\times\R)$, for each $1\le p <2$,
\item[$ii)$] $u$ a distributional solution of \eqref{eq:BU}.
\end{itemize}
Moreover, if
\begin{equation}
\label{eq:A30}
\beta=o\left(\eps^{3}\right),
\end{equation}
\begin{itemize}
\item[$iii)$] $u$ is the unique entropy solution of \eqref{eq:BU}.
\end{itemize}
\end{theorem}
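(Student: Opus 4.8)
The plan is to follow the scheme of Section~\ref{sec:Ro1}: first derive a priori estimates on the (smooth, spatially decaying) solutions $\ueb$ of the parabolic problem \eqref{eq:A2}, then identify the limit by compensated compactness; $C_0$ will denote constants depending only on the data through \eqref{eq:A3}. I would start by multiplying \eqref{eq:A2} by $\ueb$ and integrating over $\R$: the convective term $\int_\R\ueb^2\px\ueb\dx$ and the dispersive term $\beta\int_\R\ueb\pxxx\ueb\dx$ vanish, giving, for every $t>0$,
\begin{equation*}
\norm{\ueb(t,\cdot)}^2_{L^2(\R)}+2\eps\int_0^t\norm{\px\ueb(s,\cdot)}^2_{L^2(\R)}\ds\le C_0 .
\end{equation*}

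The $L^{\infty}$ estimate is the crucial point, the analogue of Lemma~\ref{lm:50}. The idea is to multiply \eqref{eq:A2} by the Hamiltonian multiplier $-\beta\pxx\ueb-\frac{1}{2}\ueb^2$ and integrate over $\R$: the two cubic contributions $\frac{\beta}{2}\int_\R(\px\ueb)^3\dx$ produced, respectively, by $-\beta\pxx\ueb$ against $\ueb\px\ueb$ and by $-\frac{1}{2}\ueb^2$ against $\beta\pxxx\ueb$ cancel, leaving
\begin{equation*}
\frac{d}{dt}\left(\frac{\beta}{2}\norm{\px\ueb(t,\cdot)}^2_{L^2(\R)}-\frac{1}{6}\int_\R\ueb^3(t,x)\dx\right)+\eps\beta\norm{\pxx\ueb(t,\cdot)}^2_{L^2(\R)}=\eps\int_\R\ueb(\px\ueb)^2\dx .
\end{equation*}
Bounding $\eps\int_\R\ueb(\px\ueb)^2\dx\le\norm{\ueb}_{L^{\infty}((0,T)\times\R)}\,\eps\norm{\px\ueb(t,\cdot)}^2_{L^2(\R)}$ and $\abs{\int_\R\ueb^3\dx}\le\norm{\ueb}_{L^{\infty}((0,T)\times\R)}\norm{\ueb(t,\cdot)}^2_{L^2(\R)}$, integrating on $(0,t)$ and using the $L^2$ estimate and \eqref{eq:A3} gives
\begin{equation*}
\frac{\beta}{2}\norm{\px\ueb(t,\cdot)}^2_{L^2(\R)}+\eps\beta\int_0^t\norm{\pxx\ueb(s,\cdot)}^2_{L^2(\R)}\ds\le C_0\left(1+\norm{\ueb}_{L^{\infty}((0,T)\times\R)}\right).
\end{equation*}
Then, from $\ueb^2(t,x)=2\int_{-\infty}^x\ueb\px\ueb\dy\le2\norm{\ueb(t,\cdot)}_{L^2(\R)}\norm{\px\ueb(t,\cdot)}_{L^2(\R)}$ and the H\"older inequality, one reaches, with $y=\norm{\ueb}_{L^{\infty}((0,T)\times\R)}$, the inequality $y^4\le C_0\beta^{-1}(1+y)$, and, arguing as in the proof of Lemma~\ref{lm:50} (see \cite[Lemma~$2.3$]{Cd1}), $\norm{\ueb}_{L^{\infty}((0,T)\times\R)}\le C_0\beta^{-1/3}$. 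Feeding this back I would obtain $\beta^{2/3}\norm{\px\ueb(t,\cdot)}_{L^2(\R)}\le C_0$ for $0<t<T$, and $\eps^{1/2}\beta^{2/3}\pxx\ueb$ bounded in $L^2((0,T)\times\R)$, so that $\int_0^T\norm{\pxx\ueb(s,\cdot)}^2_{L^2(\R)}\ds\le C_0\eps^{-1}\beta^{-4/3}$.

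For the compactness step I would fix a compactly supported entropy--entropy flux pair $(\eta,q)$, multiply \eqref{eq:A2} by $\eta'(\ueb)$, and write $\pt\eta(\ueb)+\px q(\ueb)=I_{1,\eps,\beta}+I_{2,\eps,\beta}+I_{3,\eps,\beta}+I_{4,\eps,\beta}$, with $I_{1,\eps,\beta}=\px(\eps\eta'(\ueb)\px\ueb)$, $I_{2,\eps,\beta}=-\eps\eta''(\ueb)(\px\ueb)^2$, $I_{3,\eps,\beta}=\beta\px(\eta'(\ueb)\pxx\ueb)$ and $I_{4,\eps,\beta}=-\beta\eta''(\ueb)\px\ueb\pxx\ueb$. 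As in \cite[Lemma~$3.2$]{Cd2}, the $L^2$ estimate gives $I_{1,\eps,\beta}\to0$ in $H^{-1}((0,T)\times\R)$ and $\{I_{2,\eps,\beta}\}$ bounded in $L^1((0,T)\times\R)$. From the estimates above, $\norm{\beta\eta'(\ueb)\pxx\ueb}^2_{L^2((0,T)\times\R)}\le C_0\norm{\eta'}^2_{L^{\infty}(\R)}\,\beta^{2/3}\eps^{-1}$ and, by the H\"older inequality, $\norm{I_{4,\eps,\beta}}_{L^1((0,T)\times\R)}\le C_0\norm{\eta''}_{L^{\infty}(\R)}\,\beta^{1/3}\eps^{-1}$. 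Hence, under \eqref{eq:A8}, $I_{3,\eps,\beta}\to0$ in $H^{-1}((0,T)\times\R)$ and $\{I_{4,\eps,\beta}\}$ is bounded in $L^1((0,T)\times\R)$; Lemma~\ref{lm:1}, the $L^2$ estimate and the $L^p$ compensated compactness of \cite{SC} then provide sequences $\eps_n,\beta_n\to0$ and $u\in L^{\infty}((0,T);L^2(\R))$ with $u_{\eps_n,\beta_n}\to u$ in $L^{p}_{loc}((0,T)\times\R)$ for each $1\le p<2$, and passing to the limit in \eqref{eq:A2} (note that $\eps\pxx\ueb=\px(\eps\px\ueb)\to0$ and $\beta\pxxx\ueb=\pxx(\beta\px\ueb)\to0$ in $\Dp((0,T)\times\R)$, the latter because $\beta\norm{\px\ueb}_{L^{\infty}((0,T);L^2(\R))}\le C_0\beta^{1/3}\to0$) and arguing as in \cite[Theorem~$2.1$]{Cd5} shows that $u$ is a distributional solution of \eqref{eq:BU}; this is $i)$ and $ii)$. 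If instead \eqref{eq:A30} holds, then $\norm{I_{4,\eps,\beta}}_{L^1((0,T)\times\R)}\le C_0\norm{\eta''}_{L^{\infty}(\R)}\,\beta^{1/3}\eps^{-1}\to0$, so in the limit $\pt\eta(u)+\px q(u)\le0$ in $\Dp((0,T)\times\R)$ for every convex compactly supported $\eta$, which by the usual approximation yields the \Kruzkov entropy inequalities; hence $u$ is the unique entropy solution of \eqref{eq:BU}, i.e., $iii)$. Theorem~\ref{th:main-A2} follows.

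The main obstacle is the $L^{\infty}$ bound: one must choose the multiplier $-\beta\pxx\ueb-\frac{1}{2}\ueb^2$ so that the cubic terms in $\px\ueb$ cancel and only the dissipative term $\eps\beta\norm{\pxx\ueb}^2_{L^2(\R)}$ and the self-referential term $\eps\int_\R\ueb(\px\ueb)^2\dx$ remain, the latter being only linear in $\norm{\ueb}_{L^{\infty}((0,T)\times\R)}$ and therefore absorbable through the time-integrated dissipation of the $L^2$ estimate; the zeroth order cubic term $\int_\R\ueb^3\dx$ is precisely what forces the extra integrability built into \eqref{eq:A1} and \eqref{eq:A3}. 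The relations \eqref{eq:A8} and \eqref{eq:A30} then enter only in the compensated compactness step, to control $I_{3,\eps,\beta}$ and $I_{4,\eps,\beta}$.
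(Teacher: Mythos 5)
Your proposal is correct and follows essentially the same route as the paper's Appendix A: the same $L^2$ energy estimate, the same Hamiltonian-type multiplier (yours, $-\beta\pxx\ueb-\tfrac12\ueb^2$, is just half of the paper's $-\ueb^2-2\beta\pxx\ueb$) leading to $y^4\le C_0\beta^{-1}(1+y)$ and hence $\norm{\ueb}_{L^\infty}\le C_0\beta^{-1/3}$, and the same entropy-dissipation decomposition $I_{1,\eps,\beta},\dots,I_{4,\eps,\beta}$ with the identical exponent bookkeeping ($I_{3,\eps,\beta}$ small in $H^{-1}$, $I_{4,\eps,\beta}$ of order $\beta^{1/3}\eps^{-1}$, bounded under \eqref{eq:A8} and vanishing under \eqref{eq:A30}) before invoking Lemma \ref{lm:1} and the $L^p$ compensated compactness of \cite{SC}. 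No essential differences.
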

Let us prove some a priori estimates on $\ueb$, denoting with $C_0$ the constants which depend only on the initial data.

Arguing as \cite{SC}, we have
\begin{equation}
\label{eq:A15}
\norm{\ueb(t,\cdot)}^2_{L^2(\R)}+2\eps\int_{0}^{t}\norm{\px\ueb(s,\cdot)}^2_{L^2(\R)}dx \le C_0,
\end{equation}
for every $t>0$.
\begin{lemma}\label{lm:A2}
Fix $T>0$. Assume that \eqref{eq:A8} holds. There exists $C_0>0$, independent on $\eps,\,\beta$ such that
\begin{equation}
\label{eq:A25}
\norm{\ueb}_{L^{\infty}((0,T)\times\R)}\le C_{0}\beta^{-\frac{1}{3}}.
\end{equation}
Moreover,
\begin{equation}
\label{eq:A30*}
\beta^{\frac{4}{3}} \norm{\px\ueb(t,\cdot)}^2_{L^2(\R)} + 2\beta^{\frac{4}{3}}\eps\int_{0}^{t}\norm{\pxx\ueb(s,\cdot)}^2_{L^2(\R)}ds\le C_0.
\end{equation}
\end{lemma}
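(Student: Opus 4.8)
The plan is to reproduce the two--step scheme of Lemma~\ref{lm:50}: first derive a $\beta$--weighted $H^1$ bound for $\ueb$ whose right--hand side is \emph{linear} in $\norm{\ueb}_{L^{\infty}((0,T)\times\R)}$, and then close the loop through the elementary inequality
\[
\ueb^2(t,x)=2\int_{-\infty}^{x}\ueb\px\ueb\dy\le 2\norm{\ueb(t,\cdot)}_{L^2(\R)}\norm{\px\ueb(t,\cdot)}_{L^2(\R)},
\]
combined with \eqref{eq:A15}.

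For the first step I would \emph{not} test \eqref{eq:A2} with $-\pxx\ueb$ alone: after a Young inequality on the term $\beta\int_\R\ueb\px\ueb\pxx\ueb\dx$ this leaves $\norm{\ueb}^2_{L^{\infty}((0,T)\times\R)}$ on the right and only yields the weaker bound $\norm{\ueb}_{L^{\infty}((0,T)\times\R)}\le C_0\beta^{-\frac12}$. Instead I would use the multiplier $-2\beta\pxx\ueb-\ueb^2$, i.e.\ (twice) the variational derivative of the conserved functional $\tfrac12\int_\R(\beta(\px u)^2-\tfrac13 u^3)\dx$ of \eqref{eq:kdv1}. Multiplying \eqref{eq:A2} by it and integrating over $\R$, the cubic contributions of $\ueb\px\ueb$ and of $\beta\pxxx\ueb$ cancel, using $\int_\R\ueb^3\px\ueb\dx=0$ and $-\beta\int_\R\ueb^2\pxxx\ueb\dx=2\beta\int_\R\ueb\px\ueb\pxx\ueb\dx$, while the term $\eps\pxx\ueb$ produces $-2\beta\eps\norm{\pxx\ueb(t,\cdot)}^2_{L^2(\R)}-\eps\int_\R\ueb^2\pxx\ueb\dx$ and $-\eps\int_\R\ueb^2\pxx\ueb\dx=2\eps\int_\R\ueb(\px\ueb)^2\dx$. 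This gives the identity
\[
\frac{d}{dt}\int_\R\Big(\beta(\px\ueb)^2-\tfrac13\ueb^3\Big)\dx+2\beta\eps\norm{\pxx\ueb(t,\cdot)}^2_{L^2(\R)}=2\eps\int_\R\ueb(\px\ueb)^2\dx .
\]
Integrating on $(0,t)$ I would bound the right--hand side by $2\norm{\ueb}_{L^{\infty}((0,T)\times\R)}\int_0^t\eps\norm{\px\ueb(s,\cdot)}^2_{L^2(\R)}\ds\le C_0\norm{\ueb}_{L^{\infty}((0,T)\times\R)}$ via \eqref{eq:A15}, bound the two cubic terms (at $t$ and on the initial line) by $\norm{\ueb}_{L^{\infty}((0,T)\times\R)}\norm{\ueb(\cdot)}^2_{L^2(\R)}\le C_0\norm{\ueb}_{L^{\infty}((0,T)\times\R)}$, and use \eqref{eq:A3} for the rest of the initial data, obtaining
\[
\beta\norm{\px\ueb(t,\cdot)}^2_{L^2(\R)}+2\beta\eps\int_0^t\norm{\pxx\ueb(s,\cdot)}^2_{L^2(\R)}\ds\le C_0\big(1+\norm{\ueb}_{L^{\infty}((0,T)\times\R)}\big).
\]

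For the second step, inserting $\norm{\px\ueb(t,\cdot)}_{L^2(\R)}\le C_0\beta^{-\frac12}(1+\norm{\ueb}_{L^{\infty}((0,T)\times\R)})^{\frac12}$ and $\norm{\ueb(t,\cdot)}_{L^2(\R)}\le C_0$ into the pointwise inequality above gives $\norm{\ueb}^4_{L^{\infty}((0,T)\times\R)}\le C_0\beta^{-1}(1+\norm{\ueb}_{L^{\infty}((0,T)\times\R)})$; writing $y=\norm{\ueb}_{L^{\infty}((0,T)\times\R)}$ this reads $y^4\le C_0\beta^{-1}(1+y)$, which forces $y\le C_0\beta^{-\frac13}$ (if $y\ge1$ then $1+y\le2y$, hence $y^3\le C_0\beta^{-1}$); this is the analogue of ``arguing as \cite[Lemma~2.3]{Cd1}'' used in Lemma~\ref{lm:50}, and it is \eqref{eq:A25}. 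Feeding \eqref{eq:A25} back into the weighted estimate and multiplying through by $\beta^{\frac13}$ (using $\beta^{\frac13}\le1$ and $\beta^{\frac13}\norm{\ueb}_{L^{\infty}((0,T)\times\R)}\le C_0$) then yields \eqref{eq:A30*}.

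The genuinely delicate point is the choice of test function in the first step: it must be (a multiple of) the variational derivative of the KdV Hamiltonian, so that all cubic terms cancel upon integration by parts and the surviving nonlinearity $2\eps\int_\R\ueb(\px\ueb)^2\dx$ is only \emph{linear} in $\ueb$; this linearity, together with the basic dissipation bound \eqref{eq:A15}, is exactly what upgrades the exponent from $\beta^{-\frac12}$ (what the plain $-\pxx\ueb$ multiplier gives) to the claimed $\beta^{-\frac13}$. Apart from this, one only needs the standard remark that the solutions of the parabolic problem \eqref{eq:A2} are smooth and decay, together with their derivatives, as $\abs{x}\to\infty$, so that all the integrations by parts and the boundary term in $\ueb^2(t,x)=2\int_{-\infty}^x\ueb\px\ueb\dy$ are legitimate.
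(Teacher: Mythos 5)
Your proposal is correct and follows essentially the same route as the paper: the paper also tests \eqref{eq:A2} with $-\ueb^2-2\beta\pxx\ueb$, obtains the identity $\frac{d}{dt}\int_\R\bigl(\beta(\px\ueb)^2-\tfrac13\ueb^3\bigr)dx+2\beta\eps\norm{\pxx\ueb(t,\cdot)}^2_{L^2(\R)}=2\eps\int_\R\ueb(\px\ueb)^2dx$, absorbs the cubic terms via $\norm{\ueb}_{L^\infty}\norm{\ueb}^2_{L^2}$ and \eqref{eq:A15}, and closes with the same quartic inequality $y^4\le C_0\beta^{-1}(1+y)$. The only cosmetic difference is that you spell out the elementary step giving $y\le C_0\beta^{-\frac13}$ where the paper cites an external lemma.
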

\begin{proof}
Let $0<t<T$. Multiplying \eqref{eq:A2} by $-\ueb^2 -2\beta\pxx\ueb$, we have
\begin{equation}
\label{eq:A10}
\begin{split}
&\left(-\ueb^2 -2\beta\pxx\ueb\right)\pt\ueb+\left(-\ueb^2 -2\beta\pxx\ueb\right)\ueb\px\ueb\\
&\qquad\quad +\beta\left(- \ueb^2 - 2\beta\pxx\ueb\right)\pxxx\ueb = \eps\left(-\ueb^2 -2\beta\pxx\ueb\right)\pxx\ueb.
\end{split}
\end{equation}
Since
\begin{align*}
&\int_{\R}\left(-\ueb^2 -2\beta\pxx\ueb\right)\pt\ueb dx\\
&\qquad = -\frac{1}{3}\frac{d}{dt}\int_{\R}\ueb^3 dx+\beta\frac{d}{dt}\norm{\px\ueb(t,\cdot)}^2_{L^2(\R)},\\
&\int_{\R}\left(-\ueb^2 -2\beta\pxx\ueb\right)\ueb\px\ueb dx\\
&\qquad = -2\beta\int_{\R}\ueb\px\ueb\pxx\ueb dx,\\
&\beta\int_{\R}\left(- \ueb^2 - \beta\pxx\ueb\right)\pxxx\ueb dx\\
&\qquad = 2\beta\int_{\R}\ueb\px\ueb\pxx\ueb dx,\\
&\eps\int_{\R}\left(-\ueb^2 -2\beta\pxx\ueb\right)\pxx\ueb dx \\
&\qquad = 2\eps\int_{\R}\ueb(\px\ueb)^2 dx-2\beta\eps\norm{\pxx\ueb(t,\cdot)}^2_{L^2(\R)},
\end{align*}
integrating \eqref{eq:A10} on $\R$, we get
\begin{align*}
&\frac{d}{dt}\left(-\frac{1}{3}\int_{\R}\ueb^3 dx +\beta \norm{\px\ueb(t,\cdot)}^2_{L^2(\R)}\right)+ 2\beta\eps\norm{\pxx\ueb(t,\cdot)}^2_{L^2(\R)}\\
&\qquad= 2\eps\int_{\R}\ueb(\px\ueb)^2 dx \le 2\eps\norm{\ueb}_{L^{\infty}((0,T)\times\R)}\norm{\px\ueb(t,\cdot)}^2_{L^2(\R)}.
\end{align*}
\eqref{eq:A3}, \eqref{eq:A15} and an integration on $(0,t)$ give
\begin{align*}
&-\frac{1}{3}\int_{\R}\ueb^3 dx +\beta \norm{\px\ueb(t,\cdot)}^2_{L^2(\R)} + 2\beta\eps\int_{0}^{t}\norm{\pxx\ueb(s,\cdot)}^2_{L^2(\R)}ds\\
&\qquad \le C_{0}+2\eps \norm{\ueb}_{L^{\infty}((0,T)\times\R)}\int_{0}^{t}\norm{\px\ueb(s,\cdot)}^2_{L^2(\R)}ds\\
&\qquad \le C_{0} + C_0\norm{\ueb}_{L^{\infty}((0,T)\times\R)}.
\end{align*}
Again by \eqref{eq:A15}, we have
\begin{equation}
\begin{split}
\label{eq:A23}
&\beta \norm{\px\ueb(t,\cdot)}^2_{L^2(\R)} + 2\beta\eps\int_{0}^{t}\norm{\pxx\ueb(s,\cdot)}^2_{L^2(\R)}ds\\
&\qquad \le C_{0} + C_{0}\norm{\ueb}_{L^{\infty}((0,T)\times\R)} + \frac{1}{3}\int_{\R}\ueb^3 dx\\
&\qquad \le C_{0} +  C_{0}\norm{\ueb}_{L^{\infty}((0,T)\times\R)} + \frac{1}{3}\norm{\ueb}_{L^{\infty}((0,T)\times\R)}\norm{\ueb(t,\cdot)}^2_{L^2(\R)}\\
&\qquad \le C_{0}\left(1 +\norm{\ueb}_{L^{\infty}((0,T)\times\R)}\right).
\end{split}
\end{equation}
Due to \eqref{eq:A15}, \eqref{eq:A23} and the H\"older inequality,
\begin{align*}
\ueb^2(t,x)=&2\int_{-\infty}^{x}\ueb\px\ueb dy \le \int_{\R}\vert\ueb\vert\vert\px\ueb\vert dx \\
\le & \norm{\ueb(t,\cdot)}_{L^2(\R)}\norm{\px\ueb(t,\cdot)}_{L^2(\R)}\\
\le & \frac{C_{0}}{\sqrt{\beta}}\sqrt{\left(1 +\norm{\ueb}_{L^{\infty}((0,T)\times\R)}\right)},
\end{align*}
that is
\begin{equation*}
\norm{\ueb}_{L^{\infty}((0,T)\times\R)}^4 \le \frac{C_{0}}{\beta}\left(1 +\norm{\ueb}_{L^{\infty}((0,T)\times\R)}\right).
\end{equation*}
Arguing as \cite[Lemma $2.5$]{Cd2}, we have \eqref{eq:A25}.

Finally, \eqref{eq:A30*} follows from \eqref{eq:A25} and \eqref{eq:A23}.
\end{proof}

We begin by proving the following result.
\begin{lemma}\label{lm:9000}
Assume that  \eqref{eq:A1}, \eqref{eq:A3},  and \eqref{eq:A8} hold. Then, for any compactly
supported entropy-–entropy flux pair $(\eta, \,q)$, there exist two sequences $\{\eps_{n}\}_{n\in\N},\,\{\beta_{n}\}_{n\in\N}$, with $\eps_n,\,\beta_n\to0$, and a limit function
\begin{equation*}
u\in L^{\infty}((0,T);L^2(\R)),
\end{equation*}
such that
\eqref{eq:con-u-1} holds and
\begin{equation}
\label{eq:A40}
\textrm{$u$ is a distributional solution of \eqref{eq:BU}}.
\end{equation}
\end{lemma}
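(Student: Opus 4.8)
The plan is to argue exactly along the lines of the proof of Lemma~\ref{lm:259}, substituting the a priori bounds available here, namely \eqref{eq:A15} and Lemma~\ref{lm:A2}, for those used in the Rosenau case. Fix a compactly supported entropy--entropy flux pair $(\eta,q)$ and $T>0$, and multiply the first equation in \eqref{eq:A2} by $\eta'(\ueb)$. Using the identities $\eps\eta'(\ueb)\pxx\ueb=\px(\eps\eta'(\ueb)\px\ueb)-\eps\eta''(\ueb)(\px\ueb)^2$ and $-\beta\eta'(\ueb)\pxxx\ueb=-\px(\beta\eta'(\ueb)\pxx\ueb)+\beta\eta''(\ueb)\px\ueb\pxx\ueb$, one obtains $\pt\eta(\ueb)+\px q(\ueb)=I_{1,\eps,\beta}+I_{2,\eps,\beta}+I_{3,\eps,\beta}+I_{4,\eps,\beta}$, where $I_{1,\eps,\beta}=\px(\eps\eta'(\ueb)\px\ueb)$, $I_{2,\eps,\beta}=-\eps\eta''(\ueb)(\px\ueb)^2$, $I_{3,\eps,\beta}=-\px(\beta\eta'(\ueb)\pxx\ueb)$ and $I_{4,\eps,\beta}=\beta\eta''(\ueb)\px\ueb\pxx\ueb$.

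For the viscous pieces I would proceed as in \cite[Lemma~$3.2$]{Cd2}. Since \eqref{eq:A15} gives $\eps\int_0^T\norm{\px\ueb(s,\cdot)}_{L^2(\R)}^2\,ds\le C_0$, we get $\norm{\eps\eta'(\ueb)\px\ueb}_{L^2((0,T)\times\R)}^2\le\norm{\eta'}_{L^\infty(\R)}^2\,\eps\,C_0\to0$, so $I_{1,\eps,\beta}\to0$ in $H^{-1}((0,T)\times\R)$, while $\norm{I_{2,\eps,\beta}}_{L^1((0,T)\times\R)}\le C_0\norm{\eta''}_{L^\infty(\R)}$, so $\{I_{2,\eps,\beta}\}_{\eps,\beta>0}$ is bounded in $L^1$. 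For the dispersive correction $I_{3,\eps,\beta}$, estimate \eqref{eq:A30*} gives $\int_0^T\norm{\pxx\ueb(s,\cdot)}_{L^2(\R)}^2\,ds\le C_0\beta^{-4/3}\eps^{-1}$, whence $\norm{\beta\eta'(\ueb)\pxx\ueb}_{L^2((0,T)\times\R)}^2\le C_0\norm{\eta'}_{L^\infty(\R)}^2\,\beta^{2/3}\eps^{-1}$, which by \eqref{eq:A8} is $\mathcal{O}(\eps)\to0$; hence $I_{3,\eps,\beta}\to0$ in $H^{-1}((0,T)\times\R)$.

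The only genuinely delicate point — and the place where the scaling hypothesis \eqref{eq:A8} is actually used — is the $L^1$ bound for $I_{4,\eps,\beta}$. Combining \eqref{eq:A15}, which yields $\norm{\px\ueb}_{L^2((0,T)\times\R)}\le C_0\eps^{-1/2}$, with \eqref{eq:A30*}, which yields $\norm{\pxx\ueb}_{L^2((0,T)\times\R)}\le C_0\beta^{-2/3}\eps^{-1/2}$, and the Hölder inequality, one gets $\norm{\beta\eta''(\ueb)\px\ueb\pxx\ueb}_{L^1((0,T)\times\R)}\le\norm{\eta''}_{L^\infty(\R)}\,\beta\,\norm{\px\ueb}_{L^2((0,T)\times\R)}\norm{\pxx\ueb}_{L^2((0,T)\times\R)}\le C_0\norm{\eta''}_{L^\infty(\R)}\,\beta^{1/3}\eps^{-1}$, and $\beta^{1/3}=\mathcal{O}(\eps)$ by \eqref{eq:A8}, so $\{I_{4,\eps,\beta}\}_{\eps,\beta>0}$ is bounded in $L^1((0,T)\times\R)$. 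I expect this exponent bookkeeping to be the main obstacle, in the precise sense that it is exactly the computation that forces the threshold $\beta=\mathcal{O}(\eps^3)$.

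To conclude, by \eqref{eq:A15} the family $\{\ueb\}_{\eps,\beta}$ is bounded in $L^{\infty}((0,T);L^2(\R))$, and since $\eta$ and $q$ are bounded, $\pt\eta(\ueb)+\px q(\ueb)$ is bounded in $W^{-1,\infty}_{\loc}((0,T)\times\R)$; writing it as $(I_{1,\eps,\beta}+I_{3,\eps,\beta})+(I_{2,\eps,\beta}+I_{4,\eps,\beta})$, with the first bracket relatively compact in $H^{-1}_{\loc}$ and the second bounded in $\mathcal{M}_{\loc}$, Lemma~\ref{lm:1} shows that $\pt\eta(\ueb)+\px q(\ueb)$ lies in a compact subset of $H^{-1}_{\loc}((0,T)\times\R)$ for every compactly supported entropy pair. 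The $L^p$ compensated compactness theorem of \cite{SC} then produces sequences $\eps_n,\beta_n\to0$ and a function $u\in L^{\infty}((0,T);L^2(\R))$ with $u_{\eps_n,\beta_n}\to u$ in $L^p_{\loc}((0,T)\times\R)$ for every $1\le p<2$, i.e.\ \eqref{eq:con-u-1}. Finally, passing to the limit in the weak formulation of \eqref{eq:A2} gives \eqref{eq:A40}: $\eps\pxx\ueb\to0$ in $\Dp((0,T)\times\R)$ because $\eps\px\ueb\to0$ in $L^2((0,T)\times\R)$ by \eqref{eq:A15}, $\beta\pxxx\ueb\to0$ in $\Dp((0,T)\times\R)$ because $\beta\norm{\px\ueb(t,\cdot)}_{L^2(\R)}\le C_0\beta^{1/3}\to0$ by \eqref{eq:A30*}, and the strong $L^p_{\loc}$ convergence handles the nonlinear term $\ueb\px\ueb$; this is the argument of \cite[Theorem~$2.1$]{Cd5}.
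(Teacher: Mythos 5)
Your proposal is correct and follows essentially the same route as the paper: the same decomposition into $I_{1,\eps,\beta},\dots,I_{4,\eps,\beta}$, the same use of \eqref{eq:A15} and \eqref{eq:A30*} with H\"older and \eqref{eq:A8} to get $I_{3,\eps,\beta}\to0$ in $H^{-1}$ and $\{I_{4,\eps,\beta}\}$ bounded in $L^1$ (via $\beta^{1/3}/\eps=\mathcal{O}(1)$), followed by Lemma~\ref{lm:1} and the $L^p$ compensated compactness of \cite{SC}. Your explicit verification that the limit solves \eqref{eq:BU} in $\mathcal{D}'$ merely spells out what the paper delegates to \cite{SC}, and your exponent bookkeeping for $I_{3,\eps,\beta}$ (rate $\mathcal{O}(\eps)$) is in fact more accurate than the paper's stated $\mathcal{O}(\eps^2)$.
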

\begin{proof}
Let us consider a compactly supported entropy--entropy flux pair $(\eta, q)$. Multiplying \eqref{eq:A2} by $\eta'(\ueb)$, we have
\begin{align*}
\pt\eta(\ueb) + \px q(\ueb) =&\eps \eta'(\ueb) \pxx\ueb +\beta\eta'(\ueb)\pxxx\ueb \\
=& I_{1,\,\eps,\,\beta}+I_{2,\,\eps,\,\beta}+ I_{3,\,\eps,\,\beta} + I_{4,\,\eps,\,\beta},
\end{align*}
where
\begin{equation}
\begin{split}
\label{eq:1200013}
I_{1,\,\eps,\,\beta}&=\px(\eps\eta'(\ueb)\px\ueb),\\
I_{2,\,\eps,\,\beta}&= -\eps\eta''(\ueb)(\px\ueb)^2,\\
I_{3,\,\eps,\,\beta}&= \px(\beta\eta'(\ueb)\pxx\ueb),\\
I_{4,\,\eps,\,\beta}&= -\beta\eta''(\ueb)\px\ueb\pxx\ueb.
\end{split}
\end{equation}
Fix $T>0$. Arguing as \cite[Lemma $3.2$]{Cd2}, we have that $I_{1,\,\eps,\,\beta}\to0$ in $H^{-1}((0,T) \times\R)$, and $\{I_{2,\,\eps,\,\beta}\}_{\eps,\beta >0}$ is bounded in $L^1((0,T)\times\R)$.\\
We claim that
\begin{equation*}
I_{3,\,\eps,\,\beta}\to0 \quad \text{in $H^{-1}((0,T) \times\R),\,T>0,$ as $\eps\to 0$.}
\end{equation*}
By \eqref{eq:A8} and Lemma \ref{lm:A2},
\begin{align*}
&\norm{ \beta\eta'(\ueb)\pxx\ueb}^2_{L^2((0,T)\times\R)}\\
&\qquad\le \beta^2 \norm{\eta'}_{L^{\infty}(\R)}\norm{\pxx\ueb}^2_{L^2((0,T)\times\R)}\\
&\qquad= \norm{\eta'}_{L^{\infty}(\R)}\frac{\beta^2\eps}{\eps}\norm{\pxx\ueb}^2_{L^2((0,T)\times\R)}\\
&\qquad=\norm{\eta'}_{L^{\infty}(\R)}\frac{\beta^{\frac{2}{3}}\beta^{\frac{4}{3}}\eps}{\eps}\norm{\pxx\ueb}^2_{L^2((0,T)\times\R)}
\le C_{0}\norm{\eta'}_{L^{\infty}(\R)}\eps^2\to 0.
\end{align*}
Let us show that
\begin{equation*}
\{I_{4,\,\eps,\,\beta}\}\quad \text{is bounded in $L^1((0,T) \times\R),\,T>0$.}
\end{equation*}
Thanks to \eqref{eq:A8}, \eqref{eq:A15}, Lemma \ref{lm:A2}, and the H\"older inequality,
\begin{align*}
&\norm{\beta\eta''(\ueb)\px\ueb\pxx\ueb}_{L^1((0,T)\times\R)}\\
&\qquad\le\beta\norm{\eta''}_{L^{\infty}(\R)}\int_{0}^{T}\!\!\!\int_{\R}\vert\px\ueb\pxx\ueb\vert dsdx\\
&\qquad=\norm{\eta''}_{L^{\infty}(\R)}\frac{\beta^{\frac{1}{3}}\beta^{\frac{2}{3}}\eps}{\eps}\norm{\px\ueb}_{L^2((0,T)\times\R)}\norm{\pxx\ueb}_{L^2((0,T)\times\R)}\\
&\qquad\le C_{0}\norm{\eta''}_{L^{\infty}(\R)}\frac{\beta^{\frac{1}{3}}}{\eps}\le C_{0}\norm{\eta''}_{L^{\infty}(\R)}.
\end{align*}
Arguing as in \cite{SC}, we have \eqref{eq:A40}.
\end{proof}
\begin{lemma}\label{eq:10034}
Assume \eqref{eq:A1}, \eqref{eq:A3},  and \eqref{eq:A8} hold. Then, for any compactly
supported entropy–-entropy flux pair $(\eta, \,q)$, there exist two sequences $\{\eps_{n}\}_{n\in\N},\,\{\beta_{n}\}_{n\in\N}$, with $\eps_n,\,\beta_n\to0$, and a limit function
\begin{equation*}
u\in L^{\infty}((0,T);L^2(\R)),
\end{equation*}
such that
\eqref{eq:con-u-1} and \eqref{eq:u-entro-sol-12} hold.
\end{lemma}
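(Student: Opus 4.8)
The plan is to transpose the argument of Lemma \ref{lm:452} to the Korteweg--de Vries setting of this appendix, using the a priori bounds of Lemma \ref{lm:A2} together with the scaling assumption in its sharper $o$-form \eqref{eq:A30}. Fix a compactly supported entropy--entropy flux pair $(\eta,q)$ and multiply \eqref{eq:A2} by $\eta'(\ueb)$; this produces the identity
\begin{equation*}
\pt\eta(\ueb)+\px q(\ueb)=I_{1,\,\eps,\,\beta}+I_{2,\,\eps,\,\beta}+I_{3,\,\eps,\,\beta}+I_{4,\,\eps,\,\beta},
\end{equation*}
with $I_{1,\,\eps,\,\beta},\dots,I_{4,\,\eps,\,\beta}$ as in \eqref{eq:1200013}. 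From the proof of Lemma \ref{lm:9000} one already has $I_{1,\,\eps,\,\beta}\to0$ and $I_{3,\,\eps,\,\beta}\to0$ in $H^{-1}((0,T)\times\R)$ and $\{I_{2,\,\eps,\,\beta}\}_{\eps,\beta>0}$ bounded in $L^1((0,T)\times\R)$; moreover, when $\eta$ is convex, $I_{2,\,\eps,\,\beta}=-\eps\eta''(\ueb)(\px\ueb)^2\le0$.

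The one genuinely new point is the strong decay $I_{4,\,\eps,\,\beta}\to0$ in $L^1((0,T)\times\R)$. Repeating the H\"older estimate from the proof of Lemma \ref{lm:9000} and inserting \eqref{eq:A15} and \eqref{eq:A30*} gives
\begin{equation*}
\norm{\beta\eta''(\ueb)\px\ueb\pxx\ueb}_{L^1((0,T)\times\R)}\le C_0\norm{\eta''}_{L^{\infty}(\R)}\frac{\beta^{\frac{1}{3}}}{\eps},
\end{equation*}
and, in contrast with Lemma \ref{lm:9000}, under the stronger assumption \eqref{eq:A30} the right-hand side now tends to $0$. As in Lemma \ref{lm:9000}, writing the right-hand side of the entropy identity as $(I_{1,\,\eps,\,\beta}+I_{3,\,\eps,\,\beta})+(I_{2,\,\eps,\,\beta}+I_{4,\,\eps,\,\beta})$, invoking Lemma \ref{lm:1} with \eqref{eq:A15}, and using the $L^p$ compensated compactness of \cite{SC}, one extracts sequences $\eps_n,\beta_n\to0$ and a function $u\in L^{\infty}((0,T);L^2(\R))$ with $u_{\eps_n,\beta_n}\to u$ in $L^p_{loc}((0,T)\times\R)$ for every $1\le p<2$, i.e.\ \eqref{eq:con-u-1}; passing to a further subsequence, the convergence is also almost everywhere.

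Finally one passes to the limit in the entropy identity for $\eta$ convex and compactly supported. Since $\eta$ and $q$ are bounded and $u_{\eps_n,\beta_n}\to u$ a.e., $\eta(u_{\eps_n,\beta_n})\to\eta(u)$ and $q(u_{\eps_n,\beta_n})\to q(u)$ in $L^1_{loc}$, hence $\pt\eta(u_{\eps_n,\beta_n})+\px q(u_{\eps_n,\beta_n})\to\pt\eta(u)+\px q(u)$ in $\mathcal{D}'((0,T)\times\R)$; on the right-hand side $I_{1,\,\eps_n,\,\beta_n}+I_{3,\,\eps_n,\,\beta_n}\to0$ in $H^{-1}$ and $I_{4,\,\eps_n,\,\beta_n}\to0$ in $L^1$, while $I_{2,\,\eps_n,\,\beta_n}\le0$, so that $\pt\eta(u)+\px q(u)\le0$ in $\mathcal{D}'((0,T)\times\R)$ for every compactly supported convex pair $(\eta,q)$. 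Approximating the \Kruzkov entropies $u\mapsto|u-k|$ by such pairs and taking into account the initial datum via \eqref{eq:A3} and \eqref{eq:con-u-1}, exactly as in \cite[Theorem $2.1$]{Cd5}, yields that $u$ is an entropy solution of \eqref{eq:BU}; by \Kruzkovs uniqueness theorem it is the unique entropy solution (which, incidentally, forces the whole family $\{u_{\eps,\beta}\}$ to converge), so \eqref{eq:con-u-1} and \eqref{eq:u-entro-sol-12} hold. I expect the main obstacle to be precisely the $L^1$-decay of $I_{4,\,\eps,\,\beta}$: this is what demands the sharper scaling \eqref{eq:A30} rather than \eqref{eq:A8}, and it relies crucially on the $\beta^{4/3}$-weighted bound \eqref{eq:A30*} furnished by Lemma \ref{lm:A2}.
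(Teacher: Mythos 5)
Your proof is correct and follows essentially the same route as the paper's (much terser) argument: the same decomposition $I_{1,\eps,\beta},\dots,I_{4,\eps,\beta}$, the same use of Lemma \ref{lm:1} and the $L^p$ compensated compactness of \cite{SC} for \eqref{eq:con-u-1}, and the same limit passage in the entropy inequality for \eqref{eq:u-entro-sol-12}. One point in your favour: you correctly observe that the estimate $\norm{\beta\eta''(\ueb)\px\ueb\pxx\ueb}_{L^1}\le C_0\norm{\eta''}_{L^\infty(\R)}\beta^{1/3}/\eps$ yields $I_{4,\eps,\beta}\to0$ only under the sharper hypothesis \eqref{eq:A30} ($\beta=o(\eps^3)$), not under \eqref{eq:A8} as the lemma is literally stated, where one only gets boundedness; this is consistent with part $iii)$ of Theorem \ref{th:main-A2} being conditioned on \eqref{eq:A30}, so the hypothesis in the lemma should read \eqref{eq:A30}.
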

\begin{proof}
Let us consider a compactly supported entropy--entropy flux pair $(\eta, q)$. Multiplying \eqref{eq:A2} by $\eta'(\ueb)$, we have
\begin{align*}
\pt\eta(\ueb) + \px q(\ueb) =&\eps \eta'(\ueb) \pxx\ueb +\beta\eta'(\ueb)\pxxx\ueb  \\
=& I_{1,\,\eps,\,\beta}+I_{2,\,\eps,\,\beta}+ I_{3,\,\eps,\,\beta} + I_{4,\,\eps,\,\beta},
\end{align*}
where $I_{1,\,\eps,\,\beta},\,I_{2,\,\eps,\,\beta},\, I_{3,\,\eps,\,\beta},\, I_{4,\,\eps,\,\beta}$ are defined in \eqref{eq:1200013}.

As in Lemma \ref{lm:259}, we have that $I_{1,\,\eps,\,\beta},\,I_{3,\,\eps,\,\beta}   \to 0$ in $H^{-1}((0,T)\times\R)$, $\{ I_{2,\,\eps,\,\beta}\}_{\eps,\beta>0}$ is bounded in $L^1((0,T)\times\R)$, while $I_{4,\,\eps,\,\beta}\to0$ in $L^1((0,T)\times\R)$.

Arguing as in \cite{LN}, we have \eqref{eq:u-entro-sol-12}.
\end{proof}

\begin{proof}[Proof of Theorem \ref{th:main-A2}]
Theorem \ref{th:main-A2} follows from Lemmas \ref{lm:9000} and  \ref{eq:10034}.
\end{proof}

\section{The Korteweg-de Vries equation: the second case.}\label{appen2}
In this appendix, we  argument  \eqref{eq:kdv1} with the following initial datum
\begin{equation}
\label{eq:N1}
u_{0}\in L^2(\R).
\end{equation}
We consider the approximation \eqref{eq:A2}, where $\ueb$ is a $C^{\infty}$ of $u_0$ such that
\begin{equation}
\begin{split}
\label{eq:N2}
&u_{\eps,\,\beta,\,0} \to u_{0} \quad  \textrm{in $L^{p}_{loc}(\R)$, $1\le p < 2$, as $\eps,\,\beta \to 0$,}\\
&\norm{u_{\eps,\beta, 0}}^2_{L^2(\R)}+\beta^{\frac{1}{2}}\norm{\px u_{\eps,\beta,0}}^2_{L^2(\R)}\le C_0,\quad \eps,\beta >0,
\end{split}
\end{equation}
and $C_0$ is a constant independent on $\eps$ and $\beta$.

The main result of this section is the following theorem.
\begin{theorem}
\label{th:main-N2}
Assume that \eqref{eq:N1} and  \eqref{eq:N2} hold. Fix $T>0$, if \eqref{eq:beta-eps-2} holds,
then, there exist two sequences $\{\eps_{n}\}_{n\in\N}$, $\{\beta_{n}\}_{n\in\N}$, with $\eps_n, \beta_n \to 0$, and a limit function
\begin{equation*}
u\in L^{\infty}((0,T); L^2(\R)),
\end{equation*}
such that
\begin{itemize}
\item[$i)$] $u_{\eps_n, \beta_n}\to u$  strongly in $L^{p}_{loc}(\R^{+}\times\R)$, for each $1\le p <2$,
\item[$ii)$] $u$ is the unique entropy solution of \eqref{eq:BU}.
\end{itemize}
\end{theorem}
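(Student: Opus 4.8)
The plan is to transfer to the third-order problem \eqref{eq:A2}, under the scaling $\beta=\mathcal{O}(\eps^4)$ of \eqref{eq:beta-eps-2}, the same three-step scheme already used for Theorem~\ref{th:main-A2} and for the $L^2$ case of the Rosenau equation in Section~\ref{sec:Ro1}, only with the powers of $\beta$ adjusted. First I would record the basic energy estimate: since \eqref{eq:A2} and the $L^2$ bound on $u_{\eps,\beta,0}$ in \eqref{eq:N2} coincide with those of Appendix~\ref{appen1}, multiplying \eqref{eq:A2} by $\ueb$ and integrating over $\R$ yields \eqref{eq:A15}, i.e. $\norm{\ueb(t,\cdot)}^2_{L^2(\R)}+2\eps\int_0^t\norm{\px\ueb(s,\cdot)}^2_{L^2(\R)}\ds\le C_0$.

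The technical core is the $L^\infty$ estimate, which I would obtain as in Lemma~\ref{lm:562}: multiplying \eqref{eq:A2} by $-\beta^{\frac{1}{2}}\pxx\ueb$ and integrating over $\R$, the dispersive term $\beta\pxxx\ueb$ contributes nothing and one is left with
\[
\frac{\beta^{\frac{1}{2}}}{2}\frac{d}{dt}\norm{\px\ueb(t,\cdot)}^2_{L^2(\R)}+\beta^{\frac{1}{2}}\eps\norm{\pxx\ueb(t,\cdot)}^2_{L^2(\R)}=\beta^{\frac{1}{2}}\int_\R\ueb\px\ueb\pxx\ueb\dx .
\]
Since $\beta^{\frac{1}{2}}\le C_0\eps^2$ by \eqref{eq:beta-eps-2}, the same Young inequality as in \eqref{eq:young24} bounds the right-hand side by $C_0\eps\norm{\ueb}^2_{L^\infty((0,T)\times\R)}\norm{\px\ueb(t,\cdot)}^2_{L^2(\R)}$ plus one half of the dissipation; integrating on $(0,t)$, using \eqref{eq:A15} to absorb $\eps\int_0^t\norm{\px\ueb}^2_{L^2(\R)}\ds\le C_0$, and then the one-dimensional bound $\ueb^2(t,x)=2\int_{-\infty}^x\ueb\px\ueb\dy\le 2\norm{\ueb(t,\cdot)}_{L^2(\R)}\norm{\px\ueb(t,\cdot)}_{L^2(\R)}$, one reaches $\norm{\ueb}^4_{L^\infty((0,T)\times\R)}\le C_0\beta^{-\frac{1}{2}}\big(1+\norm{\ueb}^2_{L^\infty((0,T)\times\R)}\big)$, whence, by the bootstrap lemma \cite[Lemma~2.3]{Cd1}, the analogue of \eqref{eq:A25}: $\norm{\ueb}_{L^\infty((0,T)\times\R)}\le C_0\beta^{-\frac{1}{4}}$. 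Substituting this back gives the analogue of \eqref{eq:A30*}, namely $\beta\norm{\px\ueb(t,\cdot)}^2_{L^2(\R)}+\beta\eps\int_0^t\norm{\pxx\ueb(s,\cdot)}^2_{L^2(\R)}\ds\le C_0$ for all $0<t<T$.

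Next I would pass to the limit as in Lemmas~\ref{lm:9000}--\ref{eq:10034}. For a compactly supported entropy--entropy flux pair $(\eta,q)$, multiplying \eqref{eq:A2} by $\eta'(\ueb)$ decomposes $\pt\eta(\ueb)+\px q(\ueb)$ into the four terms $I_{1,\eps,\beta},\dots,I_{4,\eps,\beta}$ of \eqref{eq:1200013}; as in Lemma~\ref{lm:9000}, $I_{1,\eps,\beta}\to0$ in $H^{-1}((0,T)\times\R)$ and $\{I_{2,\eps,\beta}\}$ is bounded in $L^1((0,T)\times\R)$, both via \eqref{eq:A15}. From the weighted estimate of the previous step,
\[
\norm{\beta\eta'(\ueb)\pxx\ueb}^2_{L^2((0,T)\times\R)}\le\beta^2\norm{\eta'}^2_{L^\infty(\R)}\norm{\pxx\ueb}^2_{L^2((0,T)\times\R)}\le C_0\norm{\eta'}^2_{L^\infty(\R)}\frac{\beta}{\eps}\to0
\]
because $\beta=\mathcal{O}(\eps^4)$, so $I_{3,\eps,\beta}\to0$ in $H^{-1}$; similarly, by the H\"older inequality,
\[
\norm{\beta\eta''(\ueb)\px\ueb\pxx\ueb}_{L^1((0,T)\times\R)}\le\beta\norm{\eta''}_{L^\infty(\R)}\norm{\px\ueb}_{L^2((0,T)\times\R)}\norm{\pxx\ueb}_{L^2((0,T)\times\R)}\le C_0\norm{\eta''}_{L^\infty(\R)}\frac{\beta^{\frac{1}{2}}}{\eps}\to0 ,
\]
so $I_{4,\eps,\beta}\to0$ in $L^1((0,T)\times\R)$. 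Lemma~\ref{lm:1} and the $L^p$ compensated compactness of \cite{SC} then yield sequences $\eps_n,\beta_n\to0$ and a limit $u\in L^\infty((0,T);L^2(\R))$ with $u_{\eps_n,\beta_n}\to u$ in $L^p_{loc}((0,T)\times\R)$ for every $1\le p<2$; and since in the limit only the sign-definite term $-\eps\eta''(\ueb)(\px\ueb)^2\le0$ (for convex $\eta$) can persist while $I_{1,\eps,\beta},I_{3,\eps,\beta}\to0$ in $H^{-1}$ and $I_{4,\eps,\beta}\to0$ in $L^1$, the limit $u$ satisfies all \Kruzkov entropy inequalities, hence, arguing as in \cite{LN}, is the unique entropy solution of \eqref{eq:BU}.

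The hard part is the $L^\infty$ estimate: the cubic term $\beta^{\frac{1}{2}}\int_\R\ueb\px\ueb\pxx\ueb\dx$, the strength $\eps$ of the artificial viscosity, and the growth of $\norm{\ueb}_{L^\infty((0,T)\times\R)}$ must be balanced precisely, and this balance is exactly what forces $\beta=\mathcal{O}(\eps^4)$; the rest is a routine transcription of arguments already in the paper. It is worth noting that, in contrast with the $L^2$ case of the Rosenau equation, the lower order of the KdV dispersion makes $\norm{I_{4,\eps,\beta}}_{L^1}$ decay like $\beta^{\frac{1}{2}}\eps^{-1}=\mathcal{O}(\eps)$, so that no strengthening to $\beta=o(\eps^4)$ is needed in order to reach the entropy solution rather than merely a distributional one.
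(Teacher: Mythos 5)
Your proposal follows essentially the same route as the paper: the $L^2$ energy bound \eqref{eq:A15}, the $L^\infty$ estimate obtained by multiplying \eqref{eq:A2} by $-\beta^{\frac{1}{2}}\pxx\ueb$ and bootstrapping exactly as in Lemma \ref{lm:B3} (yielding \eqref{eq:u-infty-3} and \eqref{eq:N3}), and then the four-term entropy decomposition \eqref{eq:1200013} with the same weighted bounds showing $I_{3,\eps,\beta}\to0$ in $H^{-1}$ and $I_{4,\eps,\beta}\to0$ in $L^1$ under $\beta=\mathcal{O}(\eps^4)$. The argument and the exponent bookkeeping match the paper's proof, so the proposal is correct.
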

Let us prove some a priori estimates on $\ueb$, denoting with $C_0$ the constants which depend only on the initial data
\begin{lemma}\label{lm:B3}
Fix $T>0$. Assume that \eqref{eq:beta-eps-2} holds. There exists $C_0>0$, independent on $\eps,\beta$ such that \eqref{eq:u-infty-3} holds.
Moreover,
\begin{equation}
\label{eq:N3}
\beta\norm{\px\ueb(t,\cdot)}^2_{L^2(\R)} + \frac{3\beta\eps}{2}\int_{0}^{t}\norm{\pxx\ueb(s,\cdot)}^2_{\R}ds\le C_0.
\end{equation}
\end{lemma}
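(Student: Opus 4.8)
The plan is to follow the argument of Lemma~\ref{lm:562}, with the Rosenau equation replaced by the KdV approximation \eqref{eq:A2}. First I would observe that the energy estimate \eqref{eq:A15} still holds under \eqref{eq:N2}: multiplying \eqref{eq:A2} by $2\ueb$ and integrating over $\R$ the convective and dispersive terms drop after integration by parts, and since only $\norm{u_{\eps,\beta,0}}^2_{L^2(\R)}\le C_0$ is used one gets $\norm{\ueb(t,\cdot)}^2_{L^2(\R)}+2\eps\int_0^t\norm{\px\ueb(s,\cdot)}^2_{L^2(\R)}\,ds\le C_0$.

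Next I would multiply \eqref{eq:A2} by $-\beta^{1/2}\pxx\ueb$ and integrate over $\R$. The time-derivative term yields $\frac{\beta^{1/2}}{2}\frac{d}{dt}\norm{\px\ueb(t,\cdot)}^2_{L^2(\R)}$, the dispersive term $-\beta^{3/2}\int_\R\pxx\ueb\pxxx\ueb\,dx$ vanishes, the diffusive term gives $-\beta^{1/2}\eps\norm{\pxx\ueb(t,\cdot)}^2_{L^2(\R)}$, and the convective term contributes $-\beta^{1/2}\int_\R\ueb\px\ueb\pxx\ueb\,dx$; hence
\[
\frac{d}{dt}\left(\beta^{1/2}\norm{\px\ueb(t,\cdot)}^2_{L^2(\R)}\right)+2\beta^{1/2}\eps\norm{\pxx\ueb(t,\cdot)}^2_{L^2(\R)}=2\beta^{1/2}\int_\R\ueb\px\ueb\pxx\ueb\,dx.
\]
To bound the right-hand side I would use the first estimate in \eqref{eq:young24}, namely
\[
2\beta^{1/2}\int_\R|\ueb||\px\ueb||\pxx\ueb|\,dx\le C_0\eps\norm{\ueb}^2_{L^\infty((0,T)\times\R)}\norm{\px\ueb(t,\cdot)}^2_{L^2(\R)}+\frac{\beta^{1/2}\eps}{2}\norm{\pxx\ueb(t,\cdot)}^2_{L^2(\R)},
\]
whose derivation uses precisely the hypothesis \eqref{eq:beta-eps-2} through $\beta^{1/2}\le C\eps^2$. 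Absorbing the last term on the left and integrating on $(0,t)$, \eqref{eq:N2} and \eqref{eq:A15} give
\[
\beta^{1/2}\norm{\px\ueb(t,\cdot)}^2_{L^2(\R)}+\frac{3}{2}\beta^{1/2}\eps\int_0^t\norm{\pxx\ueb(s,\cdot)}^2_{L^2(\R)}\,ds\le C_0\left(1+\norm{\ueb}^2_{L^\infty((0,T)\times\R)}\right).
\]

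Then I would close the bound by the same bootstrap as in Lemma~\ref{lm:50}: from $\ueb^2(t,x)=2\int_{-\infty}^x\ueb\px\ueb\,dy\le\norm{\ueb(t,\cdot)}_{L^2(\R)}\norm{\px\ueb(t,\cdot)}_{L^2(\R)}$ together with the last two displays one gets $\norm{\ueb}^4_{L^\infty((0,T)\times\R)}\le C_0\beta^{-1/2}\left(1+\norm{\ueb}^2_{L^\infty((0,T)\times\R)}\right)$, i.e.\ $y^4\le C_0\delta^{-1}(1+y^2)$ with $y=\norm{\ueb}_{L^\infty((0,T)\times\R)}$ and $\delta=\beta^{1/2}$, so $y\le C_0\delta^{-1/2}$ by \cite[Lemma~2.3]{Cd1}; this is \eqref{eq:u-infty-3}. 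Substituting \eqref{eq:u-infty-3} into the second display (recall $0<\beta<1$) and multiplying by $\beta^{1/2}$ then yields \eqref{eq:N3}.

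The computations are routine; the only delicate point, and the one that genuinely uses the scaling \eqref{eq:beta-eps-2}, is the cubic convective term $\int_\R\ueb\px\ueb\pxx\ueb\,dx$, which cannot be controlled with the $L^2$ information alone and must be split by Young's inequality so that the extra factor $\beta^{1/2}$ offsets the $1/\eps$ loss, exactly as in \eqref{eq:young24}. The apparent circularity — the gradient bound involves $\norm{\ueb}_{L^\infty}$, which is in turn controlled by the gradient bound — is removed by the elementary inequality $y^4\le C_0\delta^{-1}(1+y^2)$.
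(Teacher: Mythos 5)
Your proof is correct and follows essentially the same route as the paper: multiply \eqref{eq:A2} by $-\beta^{\frac{1}{2}}\pxx\ueb$, control the cubic term via the Young-inequality splitting of \eqref{eq:young24} (which is where \eqref{eq:beta-eps-2} enters), integrate in time to reach the analogue of \eqref{eq:N6}, close the $L^{\infty}$ bound by the bootstrap inequality $y^{4}\le C_{0}\delta^{-1}(1+y^{2})$, and then substitute back and multiply by $\beta^{\frac{1}{2}}$ to obtain \eqref{eq:N3}. The only additions are your explicit verification of \eqref{eq:A15} under \eqref{eq:N2} (which the paper takes for granted) and an immaterial dropped factor of $2$ in the H\"older step.
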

\begin{proof}
Let $0<t<T$. Multiplying \eqref{eq:A2} by $-\beta^{\frac{1}{2}}\pxx\ueb$, an integration on $\R$ gives
\begin{equation}
\label{eq:N4}
\beta^{\frac{1}{2}}\frac{d}{dt}\norm{\px\ueb(t,\cdot)}^2_{L^2(\R)}+ 2\beta^{\frac{1}{2}}\eps\norm{\pxx\ueb(t,\cdot)}^2_{\R} = 2\beta^{\frac{1}{2}}\int_{\R}\ueb\px\ueb\pxx\ueb dx.
\end{equation}
Due to \eqref{eq:beta-eps-2} and the Young inequality,
\begin{equation}
\label{eq:N5}
\begin{split}
&2\beta^{\frac{1}{2}}\int_{\R}\vert\ueb\px\ueb\vert\vert\pxx\ueb\vert dx = 2\beta^{\frac{1}{2}}\int_{\R}\left\vert\frac{\ueb\px\ueb}{\eps^{\frac{1}{2}}}\right\vert\left\vert\eps^{\frac{1}{2}}\pxx\ueb\right\vert dx\\
&\qquad\le\frac{\beta^{\frac{1}{2}}}{\eps}\int_{\R}\ueb^2(\px\ueb)^2 dx +\frac{\beta^{\frac{1}{2}}\eps}{2}\norm{\pxx\ueb(t,\cdot)}^2_{L^2(\R)}\\
&\qquad\le C_{0}\eps\norm{\ueb}^2_{L^{\infty}((0,T)\times\R)}\norm{\px\ueb(t,\cdot)}^2_{L^2(\R)} +\frac{\beta^{\frac{1}{2}}\eps}{2}\norm{\pxx\ueb(t,\cdot)}^2_{L^2(\R)}.
\end{split}
\end{equation}
It follows from \eqref{eq:N4} and \eqref{eq:N5} that
\begin{align*}
\beta^{\frac{1}{2}}\frac{d}{dt}\norm{\px\ueb(t,\cdot)}^2_{L^2(\R)}+& \frac{3\beta^{\frac{1}{2}}\eps}{2}\norm{\pxx\ueb(t,\cdot)}^2_{L^2(\R)}\\
\le &C_{0}\eps\norm{\ueb}^2_{L^{\infty}((0,T)\times\R)}\norm{\px\ueb(t,\cdot)}^2_{L^2(\R)}.
\end{align*}
Integrating on $(0,t)$, from \eqref{eq:A15}  and \eqref{eq:N2}, we have
\begin{equation}
\label{eq:N6}
\begin{split}
\beta^{\frac{1}{2}}\norm{\px\ueb(t,\cdot)}^2_{L^2(\R)}&+ \frac{3\beta^{\frac{1}{2}}\eps}{2}\int_{0}^{t}\norm{\pxx\ueb(s,\cdot)}^2_{\R}ds\\
\le & C_{0} + C_{0}\eps\norm{\ueb}^2_{L^{\infty}((0,T)\times\R)}\int_{0}^{t}\norm{\px\ueb(t,\cdot)}^2_{L^2(\R)}ds\\
\le& C_{0}\left(1+\norm{\ueb}^2_{L^{\infty}((0,T)\times\R)}\right).
\end{split}
\end{equation}
Arguing as Lemma \ref{lm:50}, we have \eqref{eq:u-infty-3}.

\eqref{eq:N3} follows from \eqref{eq:u-infty-3} and \eqref{eq:N6}.
\end{proof}

We are ready for the proof of Theorem \ref{th:main-N2}.
\begin{proof}[Proof of Theorem \ref{th:main-N2}]
Let us consider a compactly supported entropy--entropy flux pair $(\eta, q)$. Multiplying \eqref{eq:A2} by $\eta'(\ueb)$, we have
\begin{align*}
\pt\eta(\ueb) + \px q(\ueb) =&\eps \eta'(\ueb) \pxx\ueb +\beta\eta'(\ueb)\pxxx\ueb  \\
=& I_{1,\,\eps,\,\beta}+I_{2,\,\eps,\,\beta}+ I_{3,\,\eps,\,\beta} + I_{4,\,\eps,\,\beta},
\end{align*}
where $I_{1,\,\eps,\,\beta},\,I_{2,\,\eps,\,\beta},\, I_{3,\,\eps,\,\beta},\, I_{4,\,\eps,\,\beta}$ are defined in \eqref{eq:1200013}.

As in Lemma \ref{lm:259}, we have that $I_{1,\,\eps,\,\beta},\,I_{3,\,\eps,\,\beta}   \to 0$ in $H^{-1}((0,T)\times\R)$, $\{ I_{2,\,\eps,\,\beta}\}_{\eps,\beta>0}$ is bounded in $L^1((0,T)\times\R)$.

We claim that
\begin{equation*}
I_{3,\,\eps,\,\beta}\to0 \quad \text{in $H^{-1}((0,T) \times\R),\,T>0,$ as $\eps\to 0$.}
\end{equation*}
By \eqref{eq:beta-eps-2} and \eqref{eq:N3},
\begin{align*}
&\norm{ \beta\eta'(\ueb)\pxx\ueb}^2_{L^2((0,T)\times\R)}\\
&\qquad\le \beta^2 \norm{\eta'}_{L^{\infty}(\R)}\norm{\pxx\ueb}^2_{L^2((0,T)\times\R)}\\
&\qquad= \norm{\eta'}_{L^{\infty}(\R)}\frac{\beta^2\eps}{\eps}\norm{\pxx\ueb}^2_{L^2((0,T)\times\R)}\\
&\qquad\le C_{0}\norm{\eta'}_{L^{\infty}(\R)}\eps^3\to 0.
\end{align*}
Let us show that
\begin{equation*}
I_{4,\,\eps,\,\beta}\to0 \quad \text{in $L^1((0,T) \times\R),\,T>0$.}
\end{equation*}
Thanks to \eqref{eq:beta-eps-2}, \eqref{eq:A15}, \eqref{eq:N3}, and the H\"older inequality,
\begin{align*}
&\norm{\beta\eta''(\ueb)\px\ueb\pxx\ueb}_{L^1((0,T)\times\R)}\\
&\qquad\le\beta\norm{\eta''}_{L^{\infty}(\R)}\int_{0}^{T}\!\!\!\int_{\R}\vert\px\ueb\pxx\ueb\vert dsdx\\
&\qquad=\norm{\eta''}_{L^{\infty}(\R)}\frac{\beta^{\frac{1}{2}}\beta^{\frac{1}{2}}\eps}{\eps}\norm{\px\ueb}_{L^2((0,T)\times\R)}\norm{\pxx\ueb}_{L^2((0,T)\times\R)}\\
&\qquad\le C_{0}\norm{\eta''}_{L^{\infty}(\R)}\eps\to0.
\end{align*}
Arguing as in \cite{LN}, the proof is concluded.
\end{proof}


\begin{thebibliography}{40}

\bibitem{AB}
{\sc M. Antonova and A. Biswas.}
\newblock Adiabatic parameter dynamics of perturbed solitary waves.
\newblock{\em Communications in Nonlinear Science and Numerical Simulation}, 14:734--748, 2009.

\bibitem{BTL}
{\sc A. Biswas, H. Triki and M. Labidi.}
\newblock Bright and dark solitons of the Rosenau-Kawahara equation with power law nonlinearity.
\newblock{\em Physics of Wave Phenomena}, 19:24--29, 2011.

\bibitem{Cd5}
{\sc G. M. Coclite and L. di Ruvo.}
\newblock A singular limit problem for the Rosenau-Korteweg-de Vries regularized long wave and Rosenau Korteweg-de Vries equation.
\newblock Submitted.

\bibitem{Cd}
{\sc G. M. Coclite and L. di Ruvo.}
\newblock A singular limit problem for conservation laws realted to the Kudryashov-Sinelshchikov equation.
\newblock Submitted.

\bibitem{Cd1}
{\sc G.~M. Coclite and L. di Ruvo.}
\newblock Convergence of the regularized short pulse equation to the short pulse one.
\newblock Submitted.

\bibitem{Cd2}
{\sc G.~M. Coclite and L. di Ruvo.}
\newblock Convergence of the Ostrovsky Equation to the Ostrovsky-Hunter One.
\newblock {\em J. Differential Equations}, 256:3245--3277, 2014.


\bibitem{CdREM}
{\sc G. M. Coclite, L. di Ruvo, J. Ernest, and S. Mishra.}
\newblock Convergence of vanishing capillarity approximations for scalar conservation laws with discontinuous fluxes.
\newblock {\em Netw. Heterog. Media}, 8(4):969--984, 2013.

\bibitem{CK}
{\sc G. ~M. Coclite and  K.~H. Karlsen.}
\newblock A singular limit problem for conservation laws related to the Camassa-Holm shallow water equation.
\newblock {\em Comm. Partial Differential Equations}, 31:1253--1272, 2006.

\bibitem{CRS}
{\sc A. Corli, C. Rohde, and V. Schleper.}
\newblock Parabolic approximations of diffusive-dispersive equations.
\newblock {\em J. Math. Anal. Appl.} 414:773–-798, 2014.

\bibitem{E}
{\sc A. Esfahani.}
\newblock Solitary wave solutions for generalized Rosenau-KdV equation.
\newblock {\em Communications in Theoretical Physics}, 55(3):396–-398, 2011.

\bibitem{EMTYB}
{\sc G. Ebadi, A. Mojaver, H. Triki, A. Yildirim, and A. Biswas.}
\newblock Topological solitons and other solutions of the Rosenau-KdV equation with power law nonlinearity.
\newblock {\em Romanian J. of Physics}, 58:3--14, 2013.



\bibitem{HXH}
{\sc J. Hu, Y. Xu, and B. Hu.}
\newblock Conservative Linear Difference Scheme for Rosenau-KdV Equation.
\newblock {\em Adv. Math. Phys.}, 423718, 2013.

\bibitem{LB}
{\sc M. Labidi and A. Biswas.}
\newblock Application of He's principles to Rosenau-Kawahara equation.
\newblock{\em Mathematics in Engineering, Science and Aerospace}, 2:183--197, 2011.

\bibitem{LN}
{\sc P. G. LeFloch and R. Natalini.}
\newblock Conservation laws with vanishing nonlinear diffusion and dispersion.
\newblock {\em  Nonlinear Anal. 36, no. 2, Ser. A: Theory Methods}, 212--230, 1992

\bibitem{Murat:Hneg}
{\sc F.~Murat.}
\newblock L'injection du c\^one positif de ${H}\sp{-1}$\ dans ${W}\sp{-1,\,q}$\  est compacte pour tout $q<2$.
\newblock {\em J. Math. Pures Appl. (9)}, 60(3):309--322, 1981.


%\bibitem{PZ}
%{\sc X. Pan and L. Zhang.}
%\newblock Numerical simulation for general Rosenau-RLW equation: An average linearized conservative scheme.
%\newblock {\em Mathematical Problems in Engineering}, 517818, 2012.

\bibitem{P}
{\sc M. A. Park.}
\newblock On the Rosenau equation.
\newblock {\em Matem\'atica Aplicada e Computacional},  9(2):145–-152, 1990.

\bibitem{RAB}
{\sc P. Razborova, B. Ahmed, and A. Biswas.}
\newblock Solitons, shock waves and conservation laws of Rosenau-KdV-RLW equation with power law nonlinearity
\newblock {\em Appl. Math. Inform. Sci.}, 8:485–-491, 2014.

\bibitem{RTB}
{\sc P. Razborova, H. Triki, and A. Biswas.}
\newblock Perturbation of dispersive shallow water waves.
\newblock{\em Ocean Engineering}, 63:1–-7, 2013.

\bibitem{Ro1}
{\sc P. Rosenau.}
\newblock A quasi-continuous description of a nonlinear transmission line.
\newblock {\em Physica Scripta},34:827–-829, 1986.

\bibitem{Ro2}
{\sc P. Rosenau.}
\newblock Dynamics of dense discrete systems.
\newblock{\em Progress of Theoretical Physics},  79:1028–-1042, 1988.

\bibitem{SC}
{\sc M. E. Schonbek.}
\newblock {Convergence of solutions to nonlinear dispersive equations}
\newblock {\em Comm. Partial Differential Equations}, 7(8):959--1000, 1982.

\bibitem{ZZ}
{\sc M. Zheng and J. Zhou.}
\newblock An average linear difference scheme for the generalized Rosenau-KdV Equation.
\newblock{\em J. of Appl. Math.} vol.2014, pages 9, 2014.

\bibitem{Z}
{\sc J. M. Zuo.}
\newblock Solitons and periodic solutions for the Rosenau-KdV and Rosenau-Kawahara equations.
\newblock {\em Applied Mathematics and Computation}, 215(2):835–-840, 2009.

\bibitem{ZZZC}
{\sc J.M. Zuo, Y.M. Zhang, T.D. Zhang and F. Chang}.
\newblock A new conservative difference scheme for the generalized Rosenau-RLW equation.
\newblock {\em Boundary Value Problems}, 516260, 2010.


\end{thebibliography}
\end{document}